\documentclass[11pt, twoside]{article}
\usepackage{amsfonts,amsmath,amssymb,amsthm}
\usepackage{mathrsfs}
\usepackage{amssymb,amscd,amsfonts,amsbsy}
\usepackage{color}
\usepackage{latexsym}
\usepackage{txfonts}
\usepackage{indentfirst}
\usepackage{soul}
\usepackage{enumitem}
\usepackage{anysize}

\usepackage{hyperref}
\hypersetup{colorlinks=true,linkcolor=blue, anchorcolor=green, citecolor=blue, urlcolor=red, filecolor=magenta, pdftoolbar=true}

\allowdisplaybreaks

\def\BMO{\operatorname{BMO}}

\def\max{\operatorname{max}}

\def\supp{\operatorname{supp}}

\def\rn{\mathbb{R}^n}
\def\la{\lambda}
\def\va{\varphi}

\newtheorem{theorem}{Theorem}

\newtheorem{proposition}[theorem]{Proposition}

\newtheorem{corollary}[theorem]{Corollary}
\newtheorem{lemma}[theorem]{Lemma}

\newtheorem{remark}[theorem]{Remark}
\newtheorem{definition}[theorem]{Definition}

\theoremstyle{definition}

\numberwithin{theorem}{section} \numberwithin{theorem}{section}
\numberwithin{equation}{section}

\begin{document}
\title{\bf\Large Quantitative Bounds and Compactness for the  Commutators of Area Integrals Associated with Self-adjoint Operators on Weighted $L^p$ and Morrey Spaces
	
\footnotetext{\hspace{-0.35cm} 2010 {\it Mathematics Subject Classification}. 42B20, 42B25, 42B35, 47B47.\endgraf
{\it Key words and phrases:}
Area integrals, Commutators, Self-adjoint operator, Quantitative weighted estimates, Compactness, Morrey space.\endgraf
}

\author{\smallskip
Chunmei Zhang and  
Xiangxing Tao 
\endgraf
 }
}
\date{}
\maketitle
	
\vspace{-0.8cm}

\begin{center}
\begin{minipage}{13cm}
{\small {\bf Abstract}\quad
Let $L$ be a non-negative self-adjoint operator, we consider some commutators generated by the BMO function $b$ and the area integral operator $S_H$ associated with the heat semigroup $\{e^{-tL}\}_{t>0}$ or the area integral operator $S_P$ associated with the Poisson semigroup $\{e^{-t\sqrt{L}}\}_{t>0}$. The strong-type estimates of these commutators on weighted $L^p$ spaces and weighted Morrey spaces are established.
At the same time, we verified that these commutators are compact operators on weighted Morrey spaces.
}
\end{minipage}
\end{center}
	
\vspace{0.2cm}
	
%\tableofcontents

\vspace{0.2cm}

\section{Introduction and main results}
 Since the well-known Kato conjecture was solved in 2002 by Auscher, Hofmann, Lacey et al.\cite{AHLMT}, interest has grown in developing a new Calder\'on-Zygmund theory for operators that lie beyond the classical framework, since their kernels do not possess the required decay or smoothness.

In this paper, we suppose that $L$ is any non-negative self-adjoint operator on $L^2\left(\mathbb{R}^n\right)$ and that the heat semigroup $e^{-t L}$, generated by $-L$ on $L^2\left(\mathbb{R}^n\right)$, has the kernel $h_t(x, y)$  satisfying the following Gaussian upper bound,
\begin{equation}\label{(GE)}
	\qquad \qquad \left|h_t(x, y)\right| \leq \frac{C}{t^{n / 2}} \exp \left(-\frac{|x-y|^2}{c t}\right)	
\end{equation}
for all $t>0$ and $x, y \in \mathbb{R}^n$, where $C$ and $c$ are positive constants.
Such estimates are typical for elliptic differential operator of order two, Laplace operator on an open connected domain with Dirichlet boundary conditions, Schr\"odinger operator with a nonnegative potential, and so on.
We  consider the area integral $S_H$ associated with the heat semigroup $\{e^{-t L}\}_{t>0}$, and the area integral $S_P$ associated with the Poisson semigroup $\{e^{-t\sqrt{L}}\}_{t>0}$, which are defined respectively as, \begin{equation}\label{eq2}
	S_H f(x)=\left(\int_{0}^{\infty}\int_{|x-y|<t}\left|t^2 L e^{-t^2 L} f(y)\right|^2 \frac{d y d t}{t^{n+1}}\right)^{1 / 2},
\end{equation}
% and
\begin{equation}\label{eq1}
	S_P f(x)=\left(\int_{0}^{\infty}\int_{|x-y|<t}\left|t \sqrt{L} e^{-t \sqrt{L}} f(y)\right|^2 \frac{d y d t}{t^{n+1}}\right)^{1 / 2},
\end{equation}
initially for $f \in \mathcal{S}\left(\mathbb{R}^n\right)$, the class of Schwartz functions.

It is acknowledged that when $L=-\Delta$ on $\mathbb{R}^n$, $S_P$ and $S_H$ reduce to the classical Lusin area integrals which play an important role in harmonic analysis and PDE, see \cite{fs24} for more details. 
Area integrals associated to abstract operators are also crucial in harmonic analysis. Using $S_P$ and $S_H$, Auscher et al. \cite{Aus} introduced the Hardy space $H_L^1$ adapted to the operator $L$. Subsequently, Duong and Yan \cite{TY} proved that $\BMO_{L^*}$($L^*$ is the adjoint
operator of $L$) is the dual space of the
Hardy space $H_L^1$, which  generalized Fefferman and Stein's result on the duality between
$H^1$ and $\BMO$ spaces. Later on, the theory of function spaces associated with operators has been developed and
generalized to many other different settings, see for example  \cite{DL10,HLM,HM,SY}. Recently, Martell and Prisuelos-Arribas \cite{MP1} studied weighted norm inequalities for area integrals, establishing the boundedness of various square functions in weighted Lebesgue spaces via heat and Poisson semigroups. Furthermore, in \cite{MP2}, they utilized these square functions to introduce corresponding weighted Hardy spaces $H_L^1(w)$ and demonstrated their equivalence, which follows from the comparability of the square functions in the relevant weighted spaces.
For the second-order divergence form operator $L$, Hofmann and Mayboroda \cite{HM} verified that the Hardy space $H_L^1$ can be characterized by the square functions $S_H$ and $S_P$. Building on this work, Hofmann, Lu et al. \cite{HLM} obtained a similar characterization for Hardy spaces associated with operators satisfying Gaussian or Davies-Gaffney estimates, using the functions $S_H$ and $S_P$ defined in \eqref{eq2} and \eqref{eq1}. For more details, we refer to \cite{Aus,DL10,SY} and references therein.

For $1<p<\infty$ and $w\in A_p$ (the Muckenhoupt class of weights), consider a non-negative self-adjoint operator $L$ whose heat kernel satisfies the Gaussian bound \eqref{(GE)}. For the square functions $S_H$ and $S_P$ associated with $L$, the weighted $L^p(w)$ estimate was established by Martell and Prisuelos-Arribas \cite{MP1} and Gong and Yan \cite{GY} as follow,
\begin{equation}\label{eq1.1}
	\left\|S_P f\right\|_{L^{p}(w)}+	\left\|S_H f\right\|_{L^{p}(w)} \leq C\|f\|_{L^{p}(w)},
\end{equation}
for all $f \in L^{p}(w)$. 
More general, the boundedness of $S_P$ and $S_H$ on weighted Morrey spaces were obtained by the Gong \cite{Gong}, that is,
\begin{equation}\label{eq1.3}
	\left\|S_P f\right\|_{L^{p, \kappa}(w)}+\left\|S_H f\right\|_{L^{p, \kappa}(w)} \leq C\|f\|_{L^{p, \kappa}(w)}
\end{equation}
for all $f \in L^{p, \kappa}(w)$ and $0<\kappa<1$. Here the weighted Morrey space $L^{p, \kappa}(w)$ is defined by
$$
L^{p, \kappa}(w):=\left\{f \in L_{\rm{loc}}^p(w):\|f\|_{L^{p, \kappa}(w)}<\infty\right\}
$$
for $1 \leq p<\infty, 0<\kappa<1$  and the weight $w$, 
where
$$
\|f\|_{L^{p, \kappa}(w)}=\sup _B\left(\frac{1}{w(B)^\kappa} \int_B|f(x)|^p w(x) d x\right)^{1 / p},
$$
and the supremum is taken over all balls $B$ in $\mathbb{R}^n$. Particularly, if $\kappa=\lambda / n$ with $0<\lambda<n$, then $L^{p, \kappa}(w)=L^{p, \lambda}\left(w\right)$ means the classical weighted Morrey spaces.

On the other hand, the commutator, first introduced by Calder\'on \cite{Ca}, is a fundamental tool in harmonic analysis and PDEs. It is particularly important in theories such as that of nondivergent elliptic equations with discontinuous coefficients (see \cite{Ch,fm,fr}). In a later work, Coifman, Rochberg, and Weiss \cite{CRW} established that for $b\in \BMO(\mathbb R^n)$ and a standard Calder'on-Zygmund singular integral operator $T_{cz}$, the commutator defined by
$$[b,T_{cz}](f)(x)=b(x)T_{cz}f(x)-T_{cz}(bf)(x)$$
is bounded on
$L^p(\mathbb R^n)$ for $1<p<\infty.$  Here, $\BMO$ space is the set of all locally integral functions $b$ satisfying $$\|b\|_{\BMO}:=\sup_Q\frac{1}{|Q|}\int_Q|b(y)-b_Q|dy<\infty,$$
where $b_Q:=\frac{1}{|Q|}\int_Qb(t)dt$ and the supremum is taken over all cubes $Q$ in $\mathbb R^n$ with sides parallel to the coordinate axes. In 1995, P\'erez \cite{P95} pointed out that $[b,T_{cz}]$ fail to be 
of weak type (1,1) and he provided a substitution of the weak-type boundedness by a kind of weak $L\log L$-type
estimates. 
Subsequently, significant progress has been made in understanding the strong and weak type estimates for commutators of various operators, including singular integrals with rough kernels, multilinear Calder\'on-Zygmund operators and Littlewood-Paley square functions. For example, Xue and Ding {\cite{DX} established the boundedness of the commutators of Lusin area integrals. Recently,
	Cao, Si and Zhang \cite{CSZ} investigated several kinds of weighted norm inequalities for such operators and they consider other weak type
	estimates including the restricted weak-type $(p,p)$ estimates and the endpoint estimate for the corresponding commutators. We refer the readers to \cite{BHS,Ch,LOR17,LOR21,[PT]} and the references therein. 
	
	The first purpose of the paper is to investigate the following commutators related to the area integrals $S_{P}$ and $S_{H}$, 
	$$
	\begin{aligned}
		&S_{P,b}f(x):=\left(\int_{0}^{\infty}\int_{|x-y|<t}\left|t \sqrt{L} e^{-t \sqrt{L}} ((b(x)-b(\cdot))f)(y)\right|^2 \frac{d y d t}{t^{n+1}}\right)^{1 / 2}, \\
		&S_{H,b}f(x):=\left(\int_{0}^{\infty}\int_{|x-y|<t}\left|t^2 L e^{-t^2 L}((b(x)-b(\cdot)) f)(y)\right|^2 \frac{d y d t}{t^{n+1}}\right)^{1 / 2},
	\end{aligned}
	$$ 
	initially for $f \in \mathcal{S}\left(\mathbb{R}^n\right)$.
	
	We will establish the quantitative weighted estimates for the commutators $S_{P,b}$ and  $S_{H,b}$ in weighted $L^p(w)$ spaces with $w\in A_p$ as follows.
	\medspace
	\begin{theorem}\label{lebesgue bound}
		Let $L$ be a non-negative self-adjoint operator and the corresponding heat kernel satisfies Gaussian estimates \eqref{(GE)}. Assume that $1<p<\infty$ and $w \in A_p$, $b\in \BMO(\rn)$, then there exists a constant $C$, such that
		\begin{equation}\label{lp}
			\left\|S_{P,b}f\right\|_{L^{p}(w)} \leq C\|b\|_{\BMO}[w]_{A_p}^{\alpha_p+\frac{1}{p-1}}\|f\|_{L^p(w)}
		\end{equation}
		for all $f \in L^{p}(w)$, where $\alpha_p=\max\{\frac{1}{2},\frac{1}{p-1}\}$. Also, estimate \eqref{lp} holds for the commutator $S_{H,b}$.	
	\end{theorem}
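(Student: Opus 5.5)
We prove the estimate by sparse domination, treating $S_{P,b}$ and $S_{H,b}$ together; the only input we use about the semigroup is that its kernels satisfy the bounds below. Write $\psi_t(L)=t\sqrt{L}e^{-t\sqrt{L}}$ (respectively $t^2Le^{-t^2L}$) with kernel $K_t(y,z)$. From \eqref{(GE)}, the Cauchy integral formula in the complex time variable, and subordination for the Poisson case, the kernel bound is
$$|K_t(y,z)|\le C\,\frac{t}{(t+|y-z|)^{n+1}}$$
(in the heat case even with Gaussian decay). There is no regularity in $z$ to exploit. Consequently we will not use a Calder\'on--Zygmund smoothness argument. Instead we run the Lerner-type local mean oscillation / grand maximal truncation argument, which needs only size decay plus the unweighted boundedness \eqref{eq1.1} with $w\equiv1$.

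\textbf{Step 1 (sparse bound).} For the square function itself we first prove a pointwise sparse domination
$$S f(x)^2\lesssim \sum_{j}\sum_{Q\in\mathcal S_j}\langle |f|\rangle_{Q}^2\chi_Q(x)$$
over finitely many dyadic lattices. For the commutator we prove the analogue
$$S_{b}f(x)^2\lesssim \sum_{Q\in\mathcal S}\Bigl(|b(x)-b_Q|^2\langle|f|\rangle_Q^2+\langle|b-b_Q||f|\rangle_Q^2\Bigr)\chi_Q(x).$$
To get this we follow the scheme of Lerner / Lerner--Ombrosi--Rivera-Ríos adapted to square functions, as in Cao--Si--Zhang. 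The key ingredient is the weak $(1,1)$ bound of the grand maximal truncated operator
$$\mathcal M_S f(x)=\sup_{Q\ni x}\operatorname*{ess\,sup}_{\xi\in Q}\bigl|S(f\chi_{\mathbb R^n\setminus 3Q})(\xi)^2-S(f\chi_{\mathbb R^n\setminus 3Q})(x)^2\bigr|^{1/2}$$
(or a suitable variant with truncation in $t$). For this bound we split the cone integral into $t<\ell(Q)$ and $t\ge \ell(Q)$. For $t<\ell(Q)$ and far-away $f$, the kernel decay gives control by $Mf$. For $t\ge\ell(Q)$, cones at nearby points $x,\xi$ differ only in a region of relative measure $\ell(Q)/t$, which again gives a bound by $Mf$.

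The commutator expansion $b(x)-b(y)=(b(x)-b_Q)-(b(y)-b_Q)$ then yields the two terms above. Because the square function is $\ell^2$-valued, the sums carry squares. This is the source of the exponent $\frac12$ in $\alpha_p$.

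\textbf{Step 2 (weighted bounds for the sparse forms).} For the first term we use the square sparse operator bound
$$\Bigl\|\Bigl(\sum_{Q}\langle|f|\rangle_Q^2\chi_Q\Bigr)^{1/2}\Bigr\|_{L^p(w)}\lesssim[w]_{A_p}^{\max\{\frac12,\frac1{p-1}\}}\|f\|_{L^p(w)}$$
(Lerner). The extra factor $|b-b_Q|$ costs $\|b\|_{\BMO}[w]_{A_p}^{\frac1{p-1}}$: we use duality with $g\in L^{p'}(w^{1-p'})$, the sparse John--Nirenberg estimate $\int_Q|b-b_Q||g|\lesssim\|b\|_{\BMO}\,|Q|\langle g\rangle_{L\log L,Q}$, and the bound for $L\log L$-type sparse operators (as in Lerner--Ombrosi--Rivera-Ríos, where each extra BMO symbol contributes $[w]_{A_\infty}$-type or $[w]_{A_p}^{1/(p-1)}$ factors). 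The second term is handled symmetrically, via Hölder with the $\exp L$--$L\log L$ pair. Combining the two terms gives $[w]_{A_p}^{\alpha_p+\frac1{p-1}}$.

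\textbf{Main obstacle.} The main obstacle is Step 1: establishing weak $(1,1)$ of $\mathcal M_S$ when the kernels have no smoothness. The cone-difference estimate for large $t$ must be done carefully, using the aperture geometry of $|x-y|<t$ versus $|\xi-y|<t$ together with the size bound. A fallback is to dominate via $\sup_{\xi\in Q}S(f\chi_{(3Q)^c})(\xi)\lesssim \inf_{Q}S_{\beta}f+\inf_Q Mf$, using a wider-aperture area function $S_\beta$, which is also $L^2$-bounded with norm polynomial in $\beta$. Either route then closes the sparse induction, and Step 2 is standard.
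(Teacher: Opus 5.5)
\textbf{There is a genuine gap, in Step 2.} Your route is different from the paper's: the paper writes $b(x)-b(y)$ through the Cauchy integral of $e^{z(b(x)-b(y))}$, interpolates with change of measure, and imports the weighted bound for $g^*_{\mu,\Psi}$. Your Step 1 is plausible. Your treatment of the second sparse term is also fine: $\langle|b-b_Q||f|\rangle_Q\lesssim\|b\|_{\BMO}\langle Mf\rangle_Q$ reduces it to the square sparse operator applied to $Mf$, and Buckley's bound then gives $[w]_{A_p}^{\alpha_p+\frac1{p-1}}$.

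The failure is your claim that, in the first term $\sum_Q|b(x)-b_Q|^2\langle|f|\rangle_Q^2\chi_Q(x)$, the factor $|b-b_Q|$ costs only $\|b\|_{\BMO}[w]_{A_p}^{1/(p-1)}$. Here the oscillation sits at the output point $x$. After dualizing (for $p>2$, pair $(S_bf)^2$ with $h\in L^{(p/2)'}(w)$) you must bound $\int_Q|b-b_Q|^2h\,w$. The weighted John--Nirenberg estimate $\|b-b_Q\|_{\exp L(w),Q}\lesssim[w]_{A_\infty}\|b\|_{\BMO}$ then costs one factor $[w]_{A_\infty}$ after square roots. It does not cost $[\sigma]_{A_\infty}\le[w]_{A_p}^{1/(p-1)}$ with $\sigma=w^{1-p'}$. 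This is the $[w]_{A_\infty}+[\sigma]_{A_\infty}$ factor of Lerner--Ombrosi--Rivera-R\'{\i}os, and it is only $\lesssim[w]_{A_p}^{\max\{1,\frac1{p-1}\}}$. Since $[w]_{A_\infty}\simeq[w]_{A_p}$ for $|x|^{-n+\delta}$, $\delta\to0$, this is strictly worse than $[w]_{A_p}^{1/(p-1)}$ when $p>2$. Granting Step 1, your argument gives $\|b\|_{\BMO}[w]_{A_p}^{\alpha_p+\max\{1,\frac1{p-1}\}}$. That is the same exponent Chung--Pereyra--P\'erez conjugation produces from $\|S\|_{L^p(w)}\lesssim[w]_{A_p}^{\alpha_p}$, and it agrees with the stated exponent only for $p\le2$.

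\textbf{For $p>7/3$ no repair is possible, because the stated bound is false there} (with $C$ independent of $w$, as the exponent requires). Take $n=1$, $L=-\Delta$, $b=\log|x|$, $f=\chi_{(0,1)}$ and $w_\delta=|x|^{-1+\delta}$, so that $[w_\delta]_{A_p}\simeq_p\delta^{-1}$.
\begin{itemize}
\item The kernel of $t^2Le^{-t^2L}$ is $t^{-1}\psi(\cdot/t)$ with $\psi=-G''$, where $G$ is the Gaussian. Hence $\int_{-\infty}^{s}\psi=-G'(s)$, which is bounded below by a positive constant on $[\frac12,1]$.
\item Split $\log(x/z)=\log(x/t)+\log(t/z)$. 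The second piece contributes $O(1)$. This gives $|t^2Le^{-t^2L}((b(x)-b)f)(y)|\gtrsim\log(t/x)$ for $y\in[t/2,t]$, which lies in the cone $|x-y|<t$, whenever $Kx<t<c$.
\item Therefore $S_{H,b}f(x)^2\gtrsim\int_{Kx}^{c}\log^2(t/x)\frac{dt}{t}\simeq\log^3(1/x)$ for small $x$.
\item Then $\|S_{H,b}f\|_{L^p(w_\delta)}^p\gtrsim\int_0^{x_0}\log^{3p/2}(1/x)\,x^{-1+\delta}\,dx\simeq\delta^{-1-3p/2}$, while $\|f\|_{L^p(w_\delta)}^p=\delta^{-1}$.
\item So the operator norm is $\gtrsim[w_\delta]_{A_p}^{3/2}$. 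But $\alpha_p+\frac1{p-1}<\frac32$ for $p>7/3$, and it is at most $1$ for $p\ge3$.
\end{itemize}
The same computation works for $S_{P,b}$, since $\int_{-\infty}^s\phi=\frac{1}{\pi}\frac{s}{1+s^2}$ for the Poisson analogue. Note that the paper's own proof has the same loss, hidden in its constant. It drops the factor $1/\gamma\simeq\|b\|_{\BMO}/\epsilon$, where $\epsilon$ (chosen so that $w^{1+\epsilon}\in A_p$) must be of order $[w]_{A_p}^{-1}$ or smaller; for $w_\delta$ one needs $\epsilon<\delta/(1-\delta)$. The correct target for your sparse argument is the exponent $\alpha_p+\max\{1,\frac1{p-1}\}$, which for $p\ge3$ matches the lower bound $\frac32$ above.
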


	Furthermore, we extended the results in Theorem \ref{lebesgue bound} to the weighted Morrey spaces, which can be stated as follows.
	
	%We established the following estimates for the commutators of these area integrals on weighted Morrey spaces, which will genralize the results in Theorem \ref{lebesgue bound}.
	
	\begin{theorem}\label{morreybound}
		Let L be a non-negative self-adjoint operator and the corresponding heat kernel satisfies Gaussian bounds \eqref{(GE)}. Assume that $1<p<\infty$ and $0<\kappa<1$. If $w \in A_p$ and $b\in {Osc_{\exp L^r}},\ r\geq1$, then there exists a constant $C$ such that
		\begin{equation}\label{bound}
			\left\|S_{P,b}f\right\|_{L^{p, \kappa}(w)} \leq C \|b\|_{Osc_{\exp L^r}}\|f\|_{L^{p, \kappa}(w)}
		\end{equation}
		for all $f \in L^{p, \kappa}(w)$. Also, estimate \eqref{bound} holds for the commutator  $S_{H,b}$.	
	\end{theorem}
 
	{\remark
		The definition of Orlicz space ${Osc_{\exp L^r}}$ will be presented in Section \ref{De}. To prove Theorem \ref{morreybound}, we will  introduce an auxiliary square function $g_{\mu, \Psi}^*$ which can dominated the area integrals above and establish the related pointwise estimate through sharp maximal functions. }
	
	\vspace{0.2cm}
	Obseving that $Osc_{\exp L^1}$ coincides with $\BMO(\rn)$,  %(see \cite[p.675]{[PT]})
	from Theorem \ref{morreybound}, we establish the quantitative weighted estimates for the commutators $S_{P,b}$ and  $S_{H,b}$ with $b\in \BMO(\rn)$.
	
	\medskip
	\begin{corollary}\label{morreybound-bmo}
		Let $L$ be a non-negative self-adjoint operator, such that the corresponding heat kernels satisfy Gaussian bounds \eqref{(GE)}. Let $1<p<\infty$ and $0<\kappa<1$. If $w \in A_p$ and $b\in \BMO $, then there exists a constant $C$ such that
		\begin{equation}\label{bmobound}
			\left\|S_{P,b}f\right\|_{L^{p, \kappa}(w)} \leq C \|b\|_{\BMO}\|f\|_{L^{p, \kappa}(w)},
		\end{equation}
		for all $f \in L^{p, \kappa}(w)$. Also, estimate \eqref{bmobound} holds for the commutator $S_{H,b}$.	
	\end{corollary}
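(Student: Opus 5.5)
The corollary should follow from Theorem \ref{morreybound} applied with $r=1$, once we check that $Osc_{\exp L^1}$ and $\BMO(\rn)$ coincide with comparable norms. So the plan has two steps: identify the space, then substitute.

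For the identification, recall that $Osc_{\exp L^r}$ (to be defined in Section \ref{De}) consists of the locally integrable $b$ with
$$\|b\|_{Osc_{\exp L^r}}=\sup_B \|b-b_B\|_{\exp L^r,B}<\infty,$$
where $\|\cdot\|_{\exp L^r,B}$ is the normalized Luxemburg norm on $B$ with Young function $\Phi_r(t)=e^{t^r}-1$. The supremum may run over balls or cubes; the two versions are equivalent up to dimensional constants.

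The inequality $\|b\|_{\BMO}\lesssim \|b\|_{Osc_{\exp L^1}}$ is elementary. Since $t\le e^t-1$, the normalized $L^1$ average on $B$ is dominated by the $\exp L$ Luxemburg norm. Taking $\lambda=\|b-b_B\|_{\exp L,B}$ gives
$$\frac{1}{|B|}\int_B\frac{|b-b_B|}{\lambda}\le \frac{1}{|B|}\int_B\Big(e^{|b-b_B|/\lambda}-1\Big)\le 1.$$

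The reverse inequality $\|b\|_{Osc_{\exp L^1}}\lesssim \|b\|_{\BMO}$ comes from the John--Nirenberg inequality,
$$|\{x\in B:|b(x)-b_B|>s\}|\le C_1 e^{-C_2 s/\|b\|_{\BMO}}|B|.$$
Set $\lambda=K\|b\|_{\BMO}$ with $K>1/C_2$. By the layer-cake formula,
$$\frac{1}{|B|}\int_B \Big(e^{|b-b_B|/\lambda}-1\Big)\le \frac{C_1}{\lambda}\int_0^\infty e^{s/\lambda}e^{-C_2 s/\|b\|_{\BMO}}\,ds = \frac{C_1}{KC_2-1}.$$
This is at most $1$ once $K$ is large enough, so $\|b-b_B\|_{\exp L,B}\le K\|b\|_{\BMO}$ uniformly in $B$.

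With $\|b\|_{Osc_{\exp L^1}}\simeq\|b\|_{\BMO}$ established, we take $b\in\BMO$, $r=1$, and the same $p$, $\kappa$, $w$ in Theorem \ref{morreybound}. This yields \eqref{bmobound} for $S_{P,b}$ and, by the same theorem, for $S_{H,b}$, with the constant multiplied by $K$.

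No step is a real obstacle. The only care needed is in the John--Nirenberg constant computation and in checking that the normalization of the Luxemburg norm in Section \ref{De} agrees with the one used here.
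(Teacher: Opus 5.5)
Your proposal is correct and follows the paper's argument: the paper obtains the corollary directly from Theorem \ref{morreybound} with $r=1$, citing $Osc_{\exp L^1}=\BMO(\rn)$ from P\'erez--Trujillo-Gonz\'alez. You instead verify that identification yourself via John--Nirenberg and the layer-cake formula, and your bound $C_1/(KC_2-1)$ is right. Only the direction $\|b\|_{Osc_{\exp L^1}}\lesssim\|b\|_{\BMO}$ is actually needed for the corollary.
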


	\medskip
	%Different from classical methods, 
	
	Our second purpose of the paper is to study the compactness of the commutators $S_{P,b}$ and $S_{H,b}$.  An operator $T: X \to Y$ between Banach spaces is said to be compact if it is continuous and maps bounded sets to relatively compact sets. The study of  compactness for commutators originated from Uchiyama \cite{Uch}, who characterized the compactness of $[b, T_\Omega]$ on $L^p(\mathbb{R}^n)$ by the condition $b \in \text{CMO}(\mathbb{R}^n)$, where $\text{CMO}(\mathbb{R}^n)$ is the closure of $C_c^\infty(\mathbb{R}^n)$ in $\text{BMO}(\mathbb{R}^n)$. 
	This line of research has since expanded to various operators and function spaces, including singular integrals with variable kernels \cite{CD1}, Marcinkiewicz integrals on Morrey spaces \cite{CDW1}, and parametric area integrals \cite{CW15}, with $\text{CMO}$ remaining the characterizing condition. Further developments and applications, including extrapolation techniques for compact operators, can be found in \cite{BDMT2, DM15, TXYY, TX, XZ, COY1, H1, H3}.

	In order to give the compactness of  the commutators $S_{P,b}$ and  $S_{H,b}$, we assume that the heat kernel   $h_t(x, y)$ of the semigroup $e^{-t L}$ also satisfies the following regularity estimates: there exist positive constants $C, c \in(0, \infty)$ and $\nu \in(0,1]$ such that for all $t \in(0, \infty)$ and almost every $x, y, z \in \mathbb{R}^n$ with $2|z| \leq t^{1 / 2 }+|x-y|$, it holds that
	\begin{equation}\label{smooth}
		\begin{aligned}
			& \left|h_t(x+z, y)-h_t(x, y)\right|+\left|h_t(x, y+z)-h_t(x, y)\right| \\
			\vspace{0.2cm}
			& \quad \leq \frac{C}{t^{n / 2 }}\Big(\frac{|z|}{t^{1 / 2}+|x-y|}\Big)^\nu \exp \Big\{-\frac{c|x-y|^{2}}{t}\Big\}.
		\end{aligned}
	\end{equation}

\medskip
The assumption \eqref{smooth} is reasonable. In fact, let $L=-{\rm div}(A\nabla)$ be the divergence form elliptic operator in $L^2(\rn)$, where $A$ has real entries when $n\geq3$ and complex entries when $n \in\{1, 2\}$. From \cite[Chapter 1]{AT} or \cite{BCD}, we can see that the heat kernel associated with the operator
$L$ satisfies the assumptions (\ref{(GE)}) and (\ref{smooth}). 
Moreover, we will prove that $S_{P,b}$ and $ S_{H,b}$ enjoy the following  property of compactness.

\medskip
\begin{theorem}\label{compact thm}
	Let $L$ be a non-negative self-adjoint operator such that the corresponding heat kernel $h_t$ satisfies \eqref{(GE)} and \eqref{smooth}, then for $b \in \operatorname{CMO}\left(\mathbb{R}^n\right)$, commutators $S_{P,b}$ and $ S_{H,b}$ are all compact operators in $L^{p, \lambda}\left(w\right)$ for $0<\lambda<n, 1<$ $p<\infty$.	
\end{theorem}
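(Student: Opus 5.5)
The plan follows the standard scheme: density reduction, then a Fréchet–Kolmogorov criterion adapted to weighted Morrey spaces.

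\emph{Step 1: reduction to smooth symbols.} By Corollary~\ref{morreybound-bmo}, $\|S_{P,b}f\|_{L^{p,\lambda}(w)}\le C\|b\|_{\BMO}\|f\|_{L^{p,\lambda}(w)}$. Since $b\mapsto S_{P,b}f$ is sublinear, $|S_{P,b}f-S_{P,b_j}f|\le S_{P,b-b_j}f$, so $S_{P,b}$ is the operator-norm limit of $S_{P,b_j}$ whenever $b_j\to b$ in $\BMO$. Compact sublinear operators are closed under such limits; in practice one works with the vector-valued linear operator $f\mapsto \{t\sqrt L e^{-t\sqrt L}((b(x)-b)f)(y)\}$, valued in $L^2(\Gamma(x),dy\,dt/t^{n+1})$. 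Hence it suffices to treat $b\in C_c^\infty(\mathbb R^n)$. The same reduction applies to $S_{H,b}$.

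\emph{Step 2: the compactness criterion.} I would use the known characterization of precompact sets in $L^{p,\lambda}(w)$ with $w\in A_p$ (as in \cite{CDW1,TXYY}). A set $\mathcal F$ is precompact if it satisfies three conditions:
\begin{itemize}
\item[(a)] it is bounded;
\item[(b)] $\lim_{A\to\infty}\sup_{f\in\mathcal F}\|f\chi_{\{|x|>A\}}\|_{L^{p,\lambda}(w)}=0$;
\item[(c)] $\lim_{|z|\to0}\sup_{f\in\mathcal F}\|f(\cdot+z)-f\|_{L^{p,\lambda}(w)}=0$.
\end{itemize}
Take $\mathcal F=\{S_{P,b}f:\|f\|_{L^{p,\lambda}(w)}\le1\}$ with $\supp b\subset B(0,R)$. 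Condition (a) follows directly from Corollary~\ref{morreybound-bmo}.

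\emph{Step 3: decay at infinity, condition (b).} For $|x|>A\gg R$ we have $b(x)=0$, so the integrand reduces to $P_t(bf)(y)=\int p_t(y,z)b(z)f(z)\,dz$ with $|z|<R$. Let $p_t$ denote the kernel of $t\sqrt L e^{-t\sqrt L}$. Through subordination it obeys a Poisson-type bound $|p_t(y,z)|\lesssim t\,(t+|y-z|)^{-n-1}$; for the heat version $t^2Le^{-t^2L}$, the Gaussian bound \eqref{(GE)} together with analyticity of the semigroup gives a Gaussian bound for $t^2\partial_t$-type kernels. Integrating over the cone $|x-y|<t$ then yields
\[
S_{P,b}f(x)\lesssim \|b\|_\infty |x|^{-n}\int_{B(0,R)}|f|.
\]
Using Hölder's inequality with $w\in A_p$, the quantity $\int_{B(0,R)}|f|$ is bounded by $C_R\|f\|_{L^{p,\lambda}(w)}$. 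The doubling and reverse-doubling properties of $w$ then show that the $L^{p,\lambda}(w)$ norm of $|x|^{-n}\chi_{|x|>A}$ tends to $0$; this uses $\lambda<n$ together with a dyadic annuli decomposition.

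\emph{Step 4: equicontinuity, condition (c) — the main obstacle.} Fix small $|z|$ and write $S_{P,b}f(x+z)-S_{P,b}f(x)$. The change of variables $y\mapsto y+z$ keeps the cone fixed, so the difference is controlled by the $L^2(\Gamma(x))$ norm of
\[
\int\big[p_t(y+z,u)(b(x+z)-b(u))-p_t(y,u)(b(x)-b(u))\big]f(u)\,du .
\]
I would split the $u$-integral into the region $|x-u|\le\delta^{-1}|z|$ and its complement. On the near region, each term is bounded by $\|\nabla b\|_\infty|x-u|$ times the size bound, which gives a local fractional-type maximal term of order $|z|\,\delta^{-1}Mf(x)$. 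On the far region I write
\[
(b(x+z)-b(x))\,p_t(y+z,u)+(b(x)-b(u))\big(p_t(y+z,u)-p_t(y,u)\big).
\]
The first piece is $\le|z|\|\nabla b\|_\infty$ times an $S_P$-type maximal quantity. The second piece uses the regularity \eqref{smooth}, which must be transferred to $p_t$ through the subordination formula and to $t^2Le^{-t^2L}$ through the Cauchy integral formula, and it gains a factor $(|z|/|x-u|)^\nu\le\delta^\nu$. After integrating over the cone, everything is dominated by $C(|z|\delta^{-1}+\delta^\nu)\big(Mf(x)+M^2f(x)\big)$, where $M^2$ is the iterated maximal operator, bounded on $L^{p,\lambda}(w)$. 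Choosing $\delta$ small and then $|z|$ small gives (c). The delicate parts are two: verifying the smoothness of the Poisson and heat-derivative kernels, which is needed uniformly in $t$ under the constraint $2|z|\le t+|y-u|$; and controlling the square-function integral over the region $t<|z|$, where the smoothness hypothesis fails. For the latter I would simply bound each term by the size estimate, using that $|x-u|\gtrsim|z|/\delta\gg t$ there.
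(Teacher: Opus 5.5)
Your Steps 1–3 follow the paper: reduction from $\operatorname{CMO}$ to $C_c^\infty$, the Morrey Fréchet–Kolmogorov criterion, and decay via $|x|^{-n}\int_{B(0,R)}|f|$. The genuine gap is in Step 4, in the near region.

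\textbf{Where Step 4 fails.} The two near-region terms are not symmetric. After the shift $y\mapsto y-z$, the term carrying $p_t(y+z,u)(b(x+z)-b(u))$ is the commutator square function of $f\chi_{B(x,\delta^{-1}|z|)}$ evaluated at $x+z$. The size bound therefore gives $\int_{|x-u|\le\delta^{-1}|z|}|x+z-u|^{1-n}|f(u)|\,du$, which is singular at $u=x+z$. This is $\lesssim\delta^{-1}|z|\,Mf(x+z)$, but not $\lesssim\delta^{-1}|z|\,Mf(x)$; concentrate $f$ near $x+z$ to see this. Moreover, $f\mapsto Mf(\cdot+z)$ is unbounded on $L^{p,\lambda}(w)$, e.g.\ for $w=|x|^{\alpha}$ with $\alpha<0$. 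Your first far-region piece has the same defect: it equals $|b(x+z)-b(x)|\,S_P(f\chi_{\{|x-u|>\delta^{-1}|z|\}})(x+z)$, a translated and truncated square function.

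\textbf{This cannot be fixed by sharper estimates.} Direct equicontinuity fails for the untruncated commutator. Take $L=-\Delta$ on $\mathbb R^3$, $p=3/2$, $\lambda$ small, and $w=|x|^{\alpha}\in A_{3/2}$ with $-3<\alpha<-p/(1-\lambda/3)$. Let $b(u)=u_1$ near $0$, and $f=c\chi_{B(z,\rho)}$ with unit $L^{p,\lambda}(w)$ norm and $\rho\ll|z|$. Dilation invariance gives $S_{P,b}f\gtrsim\rho c$ on $B(z,\rho/4)$. On the other hand, $S_{P,b}f\lesssim c\rho^3|z|^{-2}$ on $B(0,\rho/4)$. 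Testing the Morrey norm on $B(0,\rho/4)$ yields
$\|S_{P,b}f(\cdot+z)-S_{P,b}f\|_{L^{p,\lambda}(w)}^p\gtrsim\rho^{p+\alpha(1-\lambda/3)}|z|^{-\alpha(1-\lambda/3)}$, which tends to $\infty$ as $\rho\to0$. So your pointwise bound $C(|z|\delta^{-1}+\delta^\nu)(Mf+M^2f)(x)$ is false. Your condition (c) fails for $\{S_{P,b}f:\|f\|_{L^{p,\lambda}(w)}\le1\}$.

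\textbf{The missing idea.} You need the paper's smooth truncation in $t$: replace $q_t$ by $q_{t,\gamma}=q_t(1-\varphi(t/\gamma))$. The key is the same-point estimate $|[b,S_{H,\gamma}]f(x)-[b,S_H]f(x)|\le C\gamma Mf(x)$, which gives $[b,S_{H,\gamma}]\to[b,S_H]$ in operator norm. In $[b,S_{H,\gamma}]$ only $t\ge\gamma$ occurs. All kernels are then bounded by $\gamma^{-n}$, the near region becomes an average over a ball centred at $x$, and the regularity \eqref{smooth2} applies once $|h|\le\gamma/2$. The paper also pairs $b(x)-b(x+h)$ with the \emph{unshifted} kernel over all of $\mathbb R^n$, which produces $|h|\,S_Hf(x)$. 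The remainder is $(q_{t,\gamma}(y+h,\cdot)-q_{t,\gamma}(y,\cdot))(b(x+h)-b(\cdot))$, split into near and far parts.

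\textbf{One caution.} Keep $\gamma$ fixed: verify the criterion for $[b,S_{H,\gamma}]$ as $|h|\to0$, then conclude by the operator-norm closure argument you already use in Step 1. The paper instead sets $\gamma=|h|$ and compares $[b,S_H]f(\cdot+h)$ with $[b,S_{H,\gamma}]f(\cdot+h)$ via Minkowski. That comparison reintroduces $Mf(\cdot+h)$, which is exactly the translation problem above.
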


The organization of this article is as follows. In Section \ref{De}, we prepare
some necessary definitions and lemmas. Section \ref{proof}  will devote to establishing the weighted boundedness for the commutators of 
$S_P$ and $S_H$ in Lebesgue spaces and Morrey spaces via applying  Chauchy integral and  sharp maximal operators to an important auxiliary function.
Based on weighted estimates of $S_P$ and $S_H$, we will give the proof of Theorem \ref{compact thm} via Frechet-Komogorov theorem adapted to weighted Morrey spaces and smooth truncated techniques. In what follows, the character  $C$ or $c$, sometimes with certain parameters, always means a
positive constant that is independent of the main parameters
involved but whose value may differ from line to line.

\section[Definitions and some lemmas]{Definitions and some lemmas}\label{De}
\indent Recall that, if $L$ is a positive definite self-adjoint operator acting on $L^2(\rn)$, then it adimits a spectral resolution
$$
L=\int_0^{\infty} \lambda d E(\lambda).
$$
For every bounded Borel function $F : [0, \infty) \to C$, by using the spectral 
theorem, we can define the operator 
$$F(L) :=\int_{0}^{\infty}F(\lambda)d E_L(\lambda).$$

The following results are useful for the proof of our main theorems.

\begin{lemma}[{\cite[Lemma 2.1]{GX}}]\label{kernel}
	Let $\varphi \in C_0^{\infty}(\mathbb{R})$ be even, $\operatorname{supp} \varphi \subset(-1,1)$ and denote  $\Phi$ the Fourier transform of $\varphi$. Then for every $k=0,1,2, \ldots$, and for every $t>0$, the kernel $K_{\left(t^2 L\right)^k \Phi(t \sqrt{L})}(x, y)$ of the operator $\left(t^2 L\right)^k \Phi(t \sqrt{L})$, which was defined by the spectral theory, satisfies
	
	\begin{equation}\label{kerelsupp}
		\operatorname{supp} K_{\left(t^2 L\right)^k \Phi(t \sqrt{L})} \subseteq\left\{(x, y) \in \mathbb{R}^n \times \mathbb{R}^n:|x-y| \leq t\right\},
	\end{equation}	
	and
	
	$$
	\left|K_{\left(t^2 L\right)^\kappa \Phi(t \sqrt{L})}(x, y)\right| \leq C t^{-n},
	$$
	
	\noindent	for all $t>0$ and $x, y \in \mathbb{R}^n$.
\end{lemma}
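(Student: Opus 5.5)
The statement to prove is Lemma \ref{kernel}: finite propagation speed together with a uniform $t^{-n}$ bound for the kernels of $(t^2L)^k\Phi(t\sqrt L)$. The plan rests on two facts. The first is that the Gaussian bound \eqref{(GE)} yields Davies--Gaffney estimates, which are equivalent to finite speed of propagation for the wave equation. The second is that $\Phi$ is the Fourier transform of a compactly supported even function, so $\Phi(t\sqrt L)$ can be written as a superposition of wave operators $\cos(s\sqrt L)$ with $|s|<t$.

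\textbf{Step 1 (support).} Since $\varphi$ is even with $\operatorname{supp}\varphi\subset(-1,1)$, the spectral theorem gives
$$\Phi(t\sqrt L)=\frac{1}{2\pi}\int_{-1}^{1}\varphi(s)\cos(st\sqrt L)\,ds.$$
For $k\ge1$ we use $\lambda^{2k}\Phi(t\lambda)$, whose inverse Fourier transform is a constant multiple of $t^{2k}\varphi^{(2k)}$ rescaled. This gives
$$(t^2L)^k\Phi(t\sqrt L)=c_k\int_{-1}^{1}\varphi^{(2k)}(s)\cos(st\sqrt L)\,ds,$$
where $\varphi^{(2k)}$ is still supported in $(-1,1)$.

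Under \eqref{(GE)}, $L$ satisfies Davies--Gaffney estimates. By the Sikora / Coulhon--Sikora theorem these are equivalent to finite propagation speed:
$$\langle\cos(s\sqrt L)f_1,f_2\rangle=0\quad\text{whenever }\operatorname{dist}(\operatorname{supp}f_1,\operatorname{supp}f_2)>|s|.$$
Integrating in $s$ over $|s|<1$, scaled by $t$, gives the support property \eqref{kerelsupp}.

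\textbf{Step 2 (size).} Write $F(\lambda)=\lambda^{2k}\Phi(\lambda)$, which is a Schwartz function. We factor
$$F(t\sqrt L)=\big[F(t\sqrt L)e^{t^2L}\big]\,e^{-t^2L}.$$
More safely, we factor $F(t\sqrt L)=e^{-t^2L/2}\,G(t\sqrt L)\,e^{-t^2L/2}$ with $G(\lambda)=F(\lambda)e^{\lambda^2}$. This $G$ need not be bounded, since $\Phi$ is only entire of exponential type. So instead we use the standard route:
\begin{itemize}
\item $\|e^{-t^2L}\|_{L^1\to L^2}\le Ct^{-n/2}$ and $\|e^{-t^2L}\|_{L^2\to L^\infty}\le Ct^{-n/2}$, both from \eqref{(GE)};
\item $\|F(t\sqrt L)(I+t^2L)^{m}\|_{2\to2}\le C$ for every $m$, since $F$ decays rapidly;
\item $(I+t^2L)^{-m}$ maps $L^1\to L^2$ with norm $\lesssim t^{-n/2}$. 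This follows by writing it as the Laplace transform of the heat semigroup, $(I+t^2L)^{-m}=c_m\int_0^\infty u^{m-1}e^{-u}e^{-ut^2L}\,du$, and choosing $m>n/4$ so that the integral converges.
\end{itemize}
Splitting $F(t\sqrt L)=(I+t^2L)^{-m}\big[(I+t^2L)^{2m}F(t\sqrt L)\big](I+t^2L)^{-m}$ then gives $\|F(t\sqrt L)\|_{L^1\to L^\infty}\le Ct^{-n}$. This is exactly the pointwise kernel bound.

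The main obstacle is Step 1: justifying finite propagation speed from \eqref{(GE)}, which requires the Davies--Gaffney equivalence. I would quote that equivalence (Sikora, or Coulhon--Sikora) rather than prove it. Since the statement is cited from \cite{GX}, invoking it in this form is appropriate.
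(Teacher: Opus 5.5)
Your argument is correct and is essentially the standard proof behind the cited \cite[Lemma 2.1]{GX}; the paper itself gives no proof. The cosine representation $(t^2L)^k\Phi(t\sqrt L)=c_k\int\varphi^{(2k)}(s)\cos(st\sqrt L)\,ds$ together with finite propagation speed gives the support. Your factorization through $(I+t^2L)^{-m}$ with $m>n/4$ avoids the unboundedness of $\Phi(\lambda)e^{\lambda^2}$ and gives $\|(t^2L)^k\Phi(t\sqrt L)\|_{L^1\to L^\infty}\le Ct^{-n}$.

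One caveat concerns normalization. \eqref{(GE)} with an arbitrary constant $c$ gives Davies--Gaffney estimates, and hence a propagation speed, only up to a factor depending on $c$. For example, $L=-a\Delta$ with $a>1$ satisfies \eqref{(GE)} but has speed $\sqrt a$. Strictly, you therefore obtain support in $\{|x-y|\le c_0t\}$ rather than $\{|x-y|\le t\}$, unless $L$ or $\varphi$ is rescaled. This imprecision is already in the statement itself and does not affect how the lemma is used later.
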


\smallskip
In this paper, we  will work with the following  Muckenhoupt {$A_p$} weights.

\begin{definition}[\textbf{\rm{$A_p$} weights}]
	Suppose that $\omega$ is a nonnegative locally integrable function defined on $\rn$.
	\begin{itemize}
		\item[\rm(i)]
		Let $\ 1<p<\infty,$ we call $\omega$ belongs to the Muckenhoupt weight class $A_p$, if
		$$[\omega]_{A_p}:=\sup_{Q \subset \rn}\left(\frac{1}{|Q|}\int_Q\omega(t)dt\right)\left(\frac{1}{|Q|}\int_
		Q\omega^{-\frac{1}{p-1}}(t)dt\right)^{p-1}<\infty;$$
		\item[\rm(ii)]We call  $\omega$ belongs to the Muckenhoupt weight class $A_1$, if
		$$[\omega]_{A_1}:=\displaystyle\sup_{Q \subset \rn}\Big(\frac{1}{|Q|}\int_{Q}\omega(t)dt\Big)\|\omega^{-1}\|_{L^\infty(Q)}<\infty;$$
		\item[\rm(iii)]We call $\omega$ belongs to the Muckenhoupt weight class $A_\infty$, if
		$$[\omega]_{A_\infty}:=\displaystyle\sup_{Q \subset \rn}\Big(\frac{1}{|Q|}\int_{Q}\omega(t)dt\Big)\exp\Big(\frac{1}{|Q|}\int_{Q} \log
		\omega(t)^{-1}dt\Big)<\infty.$$
	\end{itemize}
\end{definition}

\medskip
It was proved that $A_p$ weights class enjoy the following properties.

\begin{lemma}[{\cite[Lemma 2.1]{XZ}}]\label{weights}
	Let $w\in A_p,\ 1<p<\infty,$ then
	\begin{enumerate}%[{\rm (a)}]
		\item  [\rm(a)] for any ball $B$ and $\lambda>1$, we have
		$$w(\lambda B)\leq \lambda^{np}[w]_{A_p}w(B),$$
		where $w(B)=\int_B w(x) dx;$
		
		\vspace{0.1cm}
		\item [\rm(b)]
		$A_p=\mathop\bigcup\limits_{1<q<p}A_q;$
		\item  [\rm(c)] there exists a constant $\varepsilon>0$ depending only on $n,\ p$ and $[w]_{A_p}$ such that $w^{1+\varepsilon}\in A_p$ and $$[w^{1+\epsilon}]_{A_p}\leq C ([w]_{A_p})^{1+\varepsilon}.$$
	\end{enumerate}
\end{lemma}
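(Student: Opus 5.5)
The three assertions of Lemma \ref{weights} are classical facts about Muckenhoupt weights, and we prove them directly from the definition of $[w]_{A_p}$ via H\"older's inequality and the reverse H\"older inequality.

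\textbf{Part (a).} Fix a ball $B$ and a measurable set $E\subset \lambda B$. By H\"older's inequality,
$$
|E|=\int_E w^{1/p}w^{-1/p}\le \Big(\int_E w\Big)^{1/p}\Big(\int_{\lambda B} w^{-\frac{1}{p-1}}\Big)^{\frac{p-1}{p}}.
$$
Raising to the power $p$ and using the $A_p$ condition on $\lambda B$ gives
$$
\Big(\frac{|E|}{|\lambda B|}\Big)^p\le [w]_{A_p}\frac{w(E)}{w(\lambda B)}.
$$
Taking $E=B$, so that $|E|/|\lambda B|=\lambda^{-n}$, yields $w(\lambda B)\le \lambda^{np}[w]_{A_p}w(B)$. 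The definition is stated for cubes, but the same H\"older argument works verbatim for balls; at most the constant changes by a dimensional factor, which can be absorbed.

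\textbf{Parts (b) and (c).} Both rest on the reverse H\"older inequality for $A_p$ weights: there exist $\delta>0$ and $C$, depending on $n$, $p$ and $[w]_{A_p}$, such that
$$
\Big(\frac{1}{|Q|}\int_Q w^{1+\delta}\Big)^{\frac{1}{1+\delta}}\le \frac{C}{|Q|}\int_Q w .
$$
This is the main obstacle. Its proof needs a Calder\'on--Zygmund decomposition together with the $A_\infty$-type property that $|E|/|Q|\le\alpha$ implies $w(E)/w(Q)\le\beta<1$, which follows from the inequality in part (a) applied to $Q\setminus E$. We expect to invoke it as standard.

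\emph{Part (c).} For $\varepsilon\le\delta$, reverse H\"older controls the first factor: $\frac{1}{|Q|}\int_Q w^{1+\varepsilon}\le C\big(\frac{1}{|Q|}\int_Q w\big)^{1+\varepsilon}$. For the second factor, apply Jensen's inequality (H\"older) to $\sigma=w^{-1/(p-1)}$, which gives
$$
\Big(\frac{1}{|Q|}\int_Q \sigma^{1+\varepsilon}\Big)^{p-1}\ge \Big(\frac{1}{|Q|}\int_Q\sigma\Big)^{(p-1)(1+\varepsilon)}.
$$
This is the wrong direction for an upper bound. So we also apply reverse H\"older to $\sigma$, using that $\sigma\in A_{p'}$ with $[\sigma]_{A_{p'}}=[w]_{A_p}^{1/(p-1)}$. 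This gives $\frac{1}{|Q|}\int_Q\sigma^{1+\varepsilon}\le C\big(\frac{1}{|Q|}\int_Q\sigma\big)^{1+\varepsilon}$ for small $\varepsilon$. Multiplying the two bounds gives $[w^{1+\varepsilon}]_{A_p}\le C[w]_{A_p}^{1+\varepsilon}$.

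\emph{Part (b).} The inclusion $A_q\subset A_p$ for $q<p$ follows from H\"older's inequality, since $\frac{1}{q-1}>\frac{1}{p-1}$. For the reverse inclusion, apply the reverse H\"older inequality to $\sigma\in A_{p'}$ with exponent $1+\delta'$. Choose $q<p$ with
$$
\frac{1}{q-1}=\frac{1+\delta'}{p-1}.
$$
Then
$$
\Big(\frac{1}{|Q|}\int_Q w^{-\frac{1}{q-1}}\Big)^{q-1}=\Big(\frac{1}{|Q|}\int_Q\sigma^{1+\delta'}\Big)^{\frac{p-1}{1+\delta'}}\le C\Big(\frac{1}{|Q|}\int_Q\sigma\Big)^{p-1}.
$$
Hence $[w]_{A_q}\le C[w]_{A_p}$, so $w\in A_q$.
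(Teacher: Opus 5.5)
The paper does not prove this lemma; it quotes it from \cite[Lemma 2.1]{XZ}, so there is no in-paper argument to compare against. Your proposal is the standard textbook proof, and it is correct. The pieces are the H\"older argument for (a); the duality $[\sigma]_{A_{p'}}=[w]_{A_p}^{1/(p-1)}$ with $\sigma=w^{-1/(p-1)}$; and reverse H\"older applied to both $w$ and $\sigma$ for (b) and (c). Your exponent bookkeeping also checks out: $\sigma^{1+\varepsilon}=(w^{1+\varepsilon})^{-1/(p-1)}$ in (c), and $q=1+\frac{p-1}{1+\delta'}$ in (b).

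Two small remarks.

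First, the reverse H\"older inequality you treat as a black box is available in the paper as Lemma \ref{reverse-Holder}, and using it makes every constant explicit. Set $r_\sigma=1+\big(2^{n+2p'+1}[w]_{A_p}^{1/(p-1)}\big)^{-1}$. For (c) you may take $\varepsilon=\min\{r_w,r_\sigma\}-1$, which gives $[w^{1+\varepsilon}]_{A_p}\le 2^{p(1+\varepsilon)}[w]_{A_p}^{1+\varepsilon}$. For (b) you may take $q=1+\frac{p-1}{r_\sigma}$, which gives $[w]_{A_q}\le 2^{p-1}[w]_{A_p}$.

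Second, your argument gives the constant-free inequality in (a) exactly when $[w]_{A_p}$ is defined over balls. The paper defines it over cubes. Running your argument on the cube circumscribing $\lambda B$ then produces an extra factor $(2^n/v_n)^p$, where $v_n=|B(0,1)|$. Since the stated inequality carries no constant, there is nothing to absorb this factor into, so your "can be absorbed" is not quite accurate. It is harmless for how (a) is used later in the paper, where constants are not tracked.
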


\medskip
Recall that the $A_p$ weight class satisfies a reverse H\"older property, the precise statement is as follows,  which can be found in \cite{P13}.

\medskip
\begin{lemma}[{\cite[Lemma 8.1]{P13}}]\label{reverse-Holder} 
	Let $1<p<\infty, w \in A_p$ and
	$$
	r_w=1+\frac{1}{2^{n+2 p+1}[w]_{A_p}} .
	$$
	Then
	$$
	\left(\frac{1}{|Q|} \int_Q w^{r_w}(x) d x\right)^{1 / r_w} \leq \frac{2}{|Q|} \int_Q w(x)dx.
	$$
\end{lemma}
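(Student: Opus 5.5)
This is the standard sharp reverse H\"older inequality for $A_p$ weights (following Hyt\"onen–P\'erez); we do not need any of the operator machinery of the paper. The plan is to first reduce to $A_\infty$ and then prove a reverse H\"older bound with exponent $1+\frac{1}{2^{n+1}\tau}$, where $\tau$ is a Fujii–Wilson-type constant dominated by a multiple of $[w]_{A_p}$. The constant $2^{n+2p+1}$ in $r_w$ suggests the route: use the elementary bound $[w]_{A_\infty}^{FW}\le C_{n,p}[w]_{A_p}$, or more simply work directly with the local maximal function $M(w\chi_Q)$ on cubes.

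Fix a cube $Q$, write $w_Q=\frac{1}{|Q|}\int_Q w$, and let $M_Q$ denote the dyadic maximal operator localized to $Q$. For $\delta>0$ we write
\[
\int_Q (M_Qw)^{\delta}\, w \,dx=\int_0^\infty \delta\lambda^{\delta-1}\, w(\{x\in Q: M_Qw>\lambda\})\,d\lambda .
\]
For $\lambda\le w_Q$ the set is all of $Q$, which contributes $w_Q^{\delta}w(Q)$. For $\lambda> w_Q$ we take the Calder\'on–Zygmund decomposition $\{M_Qw>\lambda\}=\bigcup_j Q_j$ with $\lambda<w_{Q_j}\le 2^n\lambda$. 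This gives
\[
w(\{M_Qw>\lambda\})\le 2^n\lambda\,|\{M_Qw>\lambda\}| .
\]
Inserting this and integrating, we obtain
\[
\int_Q (M_Qw)^{\delta} w\le w_Q^\delta w(Q)+\frac{2^n\delta}{1+\delta}\int_Q (M_Qw)^{1+\delta}.
\]
We then need $\int_Q M_Qw \le \tau\, w(Q)$ in order to absorb the last term. Since $w\le M_Qw$ a.e., the left side dominates $\int_Q w^{1+\delta}$. After absorbing, this yields
\[
\frac{1}{|Q|}\int_Q w^{1+\delta}\le 2\,w_Q^{1+\delta}
\quad\text{for } \delta\le \frac{1}{2^{n+1}\tau-1}.
\]
The absorption is legitimate if we first truncate $w$ (replace it by $\min(w,N)$), so that all quantities are finite, and then let $N\to\infty$. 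The inequality for $w^{r_w}$ averages then follows after taking $1/r_w$ powers, since $2^{1/r_w}\le 2$.

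The main obstacle is the bound $\frac{1}{w(Q)}\int_Q M_Q w\le \tau$ with $\tau\le 2^{2p}[w]_{A_p}$ or similar. We will try the following. The $A_p$ condition gives, for any measurable $E\subset Q$, the inequality $(|E|/|Q|)^p\le [w]_{A_p}\,w(E)/w(Q)$. This in turn gives a weak-type estimate for $M_Q$ on $L^p(w)$. We then combine it with a layer-cake computation of $\int_Q M_Qw$ against $w(Q)$, using the level sets of $M_Q$ applied to $w$. If this produces a factor $2^{np}$ instead of $2^{2p}$, we will accept a slightly worse dimensional constant in $r_w$; only the linear dependence on $[w]_{A_p}$ matters for Theorem \ref{lebesgue bound}. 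Finally, balls are handled by comparing with circumscribed cubes, or by running the same argument with balls in place of cubes.
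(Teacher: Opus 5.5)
The paper gives no proof of this lemma; it quotes it from P\'erez's notes, so I judge your argument on its own. Your first inequality is correct. For $\lambda\ge w_Q$ the Calder\'on--Zygmund cubes of $\{M_Qw>\lambda\}$ have averages in $(\lambda,2^n\lambda]$, which gives
\[
\int_Q (M_Qw)^{\delta}w\le w_Q^{\delta}w(Q)+\frac{2^n\delta}{1+\delta}\int_Q (M_Qw)^{1+\delta}.
\]
The step that fails as written is the absorption. To absorb the last term you need
\[
\int_Q (M_Qw)^{1+\delta}\le \tau\int_Q (M_Qw)^{\delta}w ,
\]
and your threshold $\delta\le 1/(2^{n+1}\tau-1)$ is exactly what this inequality produces. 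For $A_1$ weights it follows at once from the pointwise bound $M_Qw\le[w]_{A_1}w$. For $A_p$ weights there is no pointwise bound, and the single-cube $L^1$ estimate $\int_Q M_Qw\le\tau w(Q)$ says nothing about the power $1+\delta$.

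The missing idea is localization. Let $\lambda\ge w_Q$ and write $\{M_Qw>\lambda\}=\bigcup_jQ_j$ with $Q_j$ maximal dyadic. Every dyadic cube strictly containing $Q_j$ has average at most $\lambda<w_{Q_j}$, so $M_Qw=M_{Q_j}w$ on $Q_j$. Apply the Fujii--Wilson bound on each $Q_j$, and on $Q$ itself when $\lambda<w_Q$. This gives $\int_{\{M_Qw>\lambda\}}M_Qw\le\tau\,w(\{M_Qw>\lambda\})$ for every $\lambda>0$. Multiplying by $\delta\lambda^{\delta-1}$ and integrating in $\lambda$ yields the displayed inequality, and then your absorption goes through; this is essentially the Hyt\"onen--P\'erez argument. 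To justify it, truncate the levels, i.e. integrate only over $0<\lambda<N$. Replacing $w$ by $\min(w,N)$ would require checking that the truncated weight has the same $\tau$.

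The second problem is the constant, which is part of the statement. Since $r\mapsto(\frac{1}{|Q|}\int_Q w^r)^{1/r}$ is nondecreasing, a reverse H\"older inequality with an exponent smaller than $r_w$ does not imply the stated one. So accepting a worse dimensional constant proves a weaker lemma.

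Your sketch for bounding $\tau$ is too vague to check. The natural way to carry it out is geometric decay. For a Calder\'on--Zygmund cube $Q_j$ of $\{M_Qw>\lambda\}$, the set $E=Q_j\setminus\{M_Qw>2^{n+1}\lambda\}$ satisfies $|E|\ge|Q_j|/2$. Then $(|E|/|Q_j|)^p\le[w]_{A_p}w(E)/w(Q_j)$ gives
\[
w(\{M_Qw>2^{n+1}\lambda\})\le\bigl(1-2^{-p}[w]_{A_p}^{-1}\bigr)\,w(\{M_Qw>\lambda\}),
\]
and summing over levels yields $\tau\le 1+(n+1)\log 2\cdot 2^p[w]_{A_p}$. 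With this bound, the certified exponent $1+1/(2^{n+1}\tau-1)$ falls below $r_w$ whenever $(n+1)\log 2>2^p$, for instance when $n\ge 3$ and $p$ is close to $1$. To obtain the stated $r_w$ along your route, you would need the dimension-free bound $\tau\le 2^{2p}[w]_{A_p}$, which you have not supplied. Otherwise you need a different argument that tracks the constants more carefully.
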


\medskip

By John-Nirenberg inequality, Chung, Pereyra and Per\'ez \cite{CPP12} gave the following property of $\rm BMO$ functions.

\medskip
\begin{lemma}[{\cite[Lemma 2.2]{CPP12}}]\label{JN3} 
	Suppose that $1 <p<\infty$ and $b\in \BMO(\rn)$, then there exist $\gamma_n\in(0,1),\ \zeta_n\in(0,\infty)$ depending only on $n$ such that for any $\delta\in\mathbb R$ with $|\delta|\leq \frac{\gamma_n}{\|b\|_{\BMO}}\min\{1,\frac{1}{p-1}\}$, it holds 
	\begin{center}
		$e^{\delta b(x)}\in A_p$ and $\big[e^{\delta b}\big]_{A_p}\leq\zeta_n^p.$
	\end{center}
\end{lemma}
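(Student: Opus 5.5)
The plan is to reduce the statement to the exponential integrability form of the John--Nirenberg inequality. The key observation is that the $A_p$ characteristic of $e^{\delta b}$ does not change if we replace $b$ by $b-b_Q$ inside a fixed cube $Q$.

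\emph{Step 1 (normalization).} Fix a cube $Q$ and write $e^{\delta b}=e^{\delta b_Q}e^{\delta(b-b_Q)}$ and $\big(e^{\delta b}\big)^{-1/(p-1)}=e^{-\delta b_Q/(p-1)}e^{-\delta(b-b_Q)/(p-1)}$. In the $A_p$ quantity the constant factors combine as
\[
e^{\delta b_Q}\Big(e^{-\delta b_Q/(p-1)}\Big)^{p-1}=1 .
\]
Hence
\[
\Big(\frac{1}{|Q|}\int_Q e^{\delta b}\Big)\Big(\frac{1}{|Q|}\int_Q e^{-\frac{\delta b}{p-1}}\Big)^{p-1}
=\Big(\frac{1}{|Q|}\int_Q e^{\delta (b-b_Q)}\Big)\Big(\frac{1}{|Q|}\int_Q e^{-\frac{\delta (b-b_Q)}{p-1}}\Big)^{p-1}.
\]
Each exponent is dominated pointwise by $|\delta|\,|b-b_Q|$ or $\frac{|\delta|}{p-1}|b-b_Q|$, respectively.

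\emph{Step 2 (John--Nirenberg).} I would invoke the quantitative John--Nirenberg inequality
\[
|\{x\in Q:|b(x)-b_Q|>\lambda\}|\le c_1|Q|\,e^{-c_2\lambda/\|b\|_{\BMO}},
\]
with $c_1,c_2$ depending only on $n$; for instance $c_1=e$ and $c_2=(2^ne)^{-1}$ via the Calder\'on--Zygmund decomposition. By the layer-cake formula this gives, for $0\le s\le \gamma_n/\|b\|_{\BMO}$ with $\gamma_n:=c_2/2$,
\[
\frac{1}{|Q|}\int_Q e^{s|b-b_Q|}\,dx\le 1+c_1\int_0^\infty s\,e^{s\lambda}e^{-c_2\lambda/\|b\|_{\BMO}}\,d\lambda\le 1+c_1=:\zeta_n .
\]
This bound depends only on $n$.

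\emph{Step 3 (conclusion).} The hypothesis $|\delta|\le \frac{\gamma_n}{\|b\|_{\BMO}}\min\{1,\frac1{p-1}\}$ ensures that both $s=|\delta|$ and $s=|\delta|/(p-1)$ lie in the admissible range of Step 2. The first average is therefore at most $\zeta_n$, and the second factor is at most $\zeta_n^{p-1}$. Taking the supremum over $Q$ gives $[e^{\delta b}]_{A_p}\le \zeta_n^{p}$.

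The only substantive step is Step 2, namely obtaining the exponential decay constant in John--Nirenberg with explicit dependence on $n$ alone. I would take it as the standard iterated Calder\'on--Zygmund stopping-time argument at height $2^n\|b\|_{\BMO}$, or cite it directly. Everything else is the algebraic cancellation in Step 1.
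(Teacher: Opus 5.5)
Your Steps 1 and 2 are the standard argument and they are correct. The constants $e^{\pm\delta b_Q}$ cancel, and the exponential John--Nirenberg estimate gives $\frac{1}{|Q|}\int_Q e^{s|b-b_Q|}\le 1+e=\zeta_n$ for $0\le s\le\gamma_n/\|b\|_{\BMO}$, with $\gamma_n=(2^{n+1}e)^{-1}$.

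The gap is in Step 3. You claim the hypothesis places both $s=|\delta|$ and $s=|\delta|/(p-1)$ in this range, but that holds only for $p\ge 2$. For $1<p<2$ one has $\min\{1,\frac{1}{p-1}\}=1$, so the hypothesis gives only $|\delta|\le\gamma_n/\|b\|_{\BMO}$. Then $|\delta|/(p-1)$ can be as large as $\gamma_n/((p-1)\|b\|_{\BMO})$, which is far outside the John--Nirenberg range when $p$ is near $1$. In that case the factor $\big(\frac{1}{|Q|}\int_Q e^{-\delta(b-b_Q)/(p-1)}\big)^{p-1}$ is not controlled at all.

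A sharper constant cannot fix this, because the statement as printed is false for $p$ close to $1$. Take $b(x)=\log|x|$ and $\delta=\gamma_n/\|b\|_{\BMO}>0$; this $\delta$ satisfies the hypothesis for every $p\in(1,2)$. Then $e^{\delta b}=|x|^{\delta}$. For $1<p<2$ with $n(p-1)\le\delta$, the function $(e^{\delta b})^{-1/(p-1)}=|x|^{-\delta/(p-1)}$ is not integrable near the origin, so $e^{\delta b}\notin A_p$. This matches the known fact that $|x|^{a}\in A_p$ if and only if $-n<a<n(p-1)$.

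What your Steps 1--2 actually prove is a corrected lemma: $[e^{\delta b}]_{A_p}\le\zeta_n^{p}$ whenever $|\delta|\le\frac{\gamma_n}{\|b\|_{\BMO}}\min\{1,p-1\}$, i.e.\ whenever both $|\delta|$ and $|\delta|/(p-1)$ are at most $\gamma_n/\|b\|_{\BMO}$. Since $\min\{1,\frac{1}{p-1}\}\le\min\{1,p-1\}$ exactly when $p\ge 2$, this covers the printed statement for $p\ge2$. For $1<p<2$ the hypothesis must be strengthened to $|\delta|\le (p-1)\gamma_n/\|b\|_{\BMO}$. You should state this restriction explicitly rather than assert Step 3 for all $p$.
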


\medskip
In order to prove the boundedness of $S_{P,b}$ and $S_{H,b}$, we need to introduce the Fefferman-Stein sharp maximal functions.   
For $\delta>0$, define
%\subsection{The Sharp  Maximal Function}
$$M_\delta f(x)=\big[M(|f|^\delta)(x)\big]^\frac{1}{\delta}=\Big(\displaystyle\sup_{Q\ni x}\frac{1}{|Q|}\int_Q |f(y)|^\delta dy\Big)^\frac{1}{\delta}.$$
If $\delta=1$, $M_1f$ will be the classical Hardy-Littlewood maximal function $Mf$. 
The Fefferman-Stein sharp maximal function $M^{\sharp}$ is defined as
$$M^\sharp f(x)=\displaystyle\sup_{Q\ni x}\inf_c \frac{1}{|Q|}\int_Q|f(y)-c|dy\approx \displaystyle\sup_{Q\ni x}\frac{1}{|Q|}\int_Q|f(y)-\langle f\rangle_Q|dy,$$ and denote
$$M_\delta^{\sharp}f(x)=M^{\sharp}(|f|^\delta)(x)^\frac{1}{\delta},$$
where $\langle f\rangle_Q=\frac{1}{|Q|}\int_Qf(x)dx$ represents the integral average of $f$ over the cube $Q$.

The relationship between $M_\delta$ and $M_\delta ^\sharp$ is given by the next lemma.

\begin{lemma}[\cite{[PT]}]\label{lemwi}
	Let $\omega\in A_\infty$, there exists a positive constant $C$ that depends on the $A_\infty$ condition of $\omega$ such that for all $\la,\varepsilon>0$,
	$$\omega(\{y\in{\rn}:Mf(y)>\la,\ M^{\sharp}f(y)\leq \la \varepsilon\})\leq C \varepsilon^\rho \omega(\{y\in{\rn}:Mf(y)>\frac{\la}{2}\}).$$
	Thus, for $\delta>0$, we can get the following {estimates}:\\
	{\rm (i)} Suppose $\va:(0,\infty)\rightarrow(0,\infty)$ is doubling, then there exists a positive constant $C$ that depends on the $A_\infty$ condition of $\omega$ and the doubling condition of $\va$ such that
	$$\displaystyle\sup_{\la>0}\va(\la)\omega(\{y\in{\rn}:M_\delta f(y)>\la\})\leq C \displaystyle\sup_{\la>0}\va(\la)\omega(\{y\in{\rn}:M_\delta^{\sharp}f(y)>\la\})$$ holds for every function $f$ that makes the left-hand side of the inequality finite.\\
	{\rm (ii)} For $0<p<\infty$, there exists a positive constant $C$ that depends on the $A_\infty$ condition of $\omega$ and $p$ such that$$\int_{\rn}\big(M_\delta f(x)\big)^p\omega(x)dx\leq C\int_{\rn}\big(M_\delta^{\sharp} f(x)\big)^p\omega(x)dx$$holds for every function $f$ that makes the left-hand side of the inequality finite.
\end{lemma}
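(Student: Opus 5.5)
The proof is a good-$\lambda$ argument: a local estimate on Whitney cubes, then integration or supremum in $\lambda$ with absorption. It is enough to treat dyadic versions of $M$ and $M^\sharp$, which are pointwise comparable to the non-dyadic ones after finitely many shifted dyadic grids. We may assume $f\in L^1_{\rm loc}$ and that $\Omega_\lambda=\{y: Mf(y)>\lambda/2\}$ is a proper open set. Otherwise the left-hand sides are infinite or trivial, and the hypothesis "left-hand side finite" excludes this.

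\textbf{Good-$\lambda$ inequality.} Decompose $\Omega_\lambda$ into Whitney cubes $\{Q_j\}$. Then $\Omega_\lambda=\bigcup_j Q_j$, the cubes have disjoint interiors, and a fixed dilate $\widetilde Q_j=c_nQ_j$ meets the complement, so some point $z_j\in\widetilde Q_j$ has $Mf(z_j)\le\lambda/2$.

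It suffices to show, for each $j$,
$$|E_j|:=|\{y\in Q_j: Mf(y)>\lambda,\ M^\sharp f(y)\le\varepsilon\lambda\}|\le C\varepsilon|Q_j|.$$
If $E_j=\emptyset$ there is nothing to prove. Otherwise pick $x_j\in Q_j$ with $M^\sharp f(x_j)\le\varepsilon\lambda$. For $y\in Q_j$ with $Mf(y)>\lambda$, any cube $R\ni y$ with $\langle|f|\rangle_R>\lambda$ must be small. Indeed, if $R$ were comparable to or larger than $Q_j$, an enlargement of it would contain $z_j$, and its average would be at most $C\lambda/2$, which is not $>\lambda$ for a suitable choice of the geometric constants.

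Hence, with a constant $A$ large but fixed, $Mf(y)>\lambda$ on $Q_j$ implies $M\big((f-\langle f\rangle_{\widetilde Q_j})\chi_{\widetilde Q_j}\big)(y)>\lambda/A$. This uses $|\langle f\rangle_{\widetilde Q_j}|\le C\lambda/2$ via $z_j$, and the constants must be arranged so that the factor $2$ leaves room. The weak $(1,1)$ bound for $M$ then gives
$$|E_j|\le \frac{CA}{\lambda}\int_{\widetilde Q_j}|f-\langle f\rangle_{\widetilde Q_j}|\le \frac{CA}{\lambda}|\widetilde Q_j|\,M^\sharp f(x_j)\le C\varepsilon|Q_j|.$$

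Since $\omega\in A_\infty$, there are $C,\rho>0$ with $\omega(E)\le C(|E|/|Q|)^\rho\,\omega(Q)$ for all $E\subset Q$. So $\omega(E_j)\le C\varepsilon^\rho\omega(Q_j)$, and summing over $j$ gives the displayed good-$\lambda$ inequality.

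\textbf{Part (i).} Set $g=|f|^\delta$, so that $M_\delta f>\lambda$ if and only if $Mg>\lambda^\delta$, and likewise for $M_\delta^\sharp$. Then $\tilde\varphi(s)=\varphi(s^{1/\delta})$ is again doubling, and the claim reduces to $\delta=1$ with $g$ and $\tilde\varphi$. From the good-$\lambda$ inequality,
$$\omega(\{Mg>\lambda\})\le C\varepsilon^\rho\,\omega(\{Mg>\lambda/2\})+\omega(\{M^\sharp g>\varepsilon\lambda\}).$$
Multiply by $\tilde\varphi(\lambda)$ and take the supremum over $\lambda$. Doubling gives $\tilde\varphi(\lambda)\le C_\varphi\tilde\varphi(\lambda/2)$ and $\tilde\varphi(\lambda)\le C_{\varepsilon,\varphi}\tilde\varphi(\varepsilon\lambda)$, so
$$\sup_\lambda\tilde\varphi(\lambda)\,\omega(\{Mg>\lambda\})\le CC_\varphi\varepsilon^\rho\sup_\lambda\tilde\varphi(\lambda)\,\omega(\{Mg>\lambda\})+C_\varepsilon\sup_\lambda\tilde\varphi(\lambda)\,\omega(\{M^\sharp g>\lambda\}).$$
Choose $\varepsilon$ with $CC_\varphi\varepsilon^\rho\le1/2$. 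Because the left side is finite by hypothesis, the first term on the right can be absorbed into the left.

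\textbf{Part (ii).} Write
$$\int(M_\delta f)^p\omega=\frac p\delta\int_0^\infty s^{p/\delta-1}\,\omega(\{Mg>s\})\,ds.$$
Insert the good-$\lambda$ inequality and change variables. This gives
$$\|Mg\|^{p/\delta}_{L^{p/\delta}(\omega)}\le C2^{p/\delta}\varepsilon^\rho\|Mg\|^{p/\delta}_{L^{p/\delta}(\omega)}+\varepsilon^{-p/\delta}\|M^\sharp g\|^{p/\delta}_{L^{p/\delta}(\omega)}.$$
Choose $\varepsilon$ small and absorb, again using finiteness of the left side.

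The main obstacle is the localization step inside $Q_j$. One must choose the Whitney constants, the enlargement $\widetilde Q_j$, and the threshold $A$ consistently, so that the comparison point $z_j$ controls both the large cubes and the average $\langle f\rangle_{\widetilde Q_j}$. The absorption steps are routine once the finiteness hypothesis is used.
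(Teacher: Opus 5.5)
The paper gives no proof of this lemma; it is quoted from \cite{[PT]}, where it is the Fefferman--Stein good-$\lambda$ argument, which is your route. Your passage from the good-$\lambda$ inequality to (i) and (ii) is correct: replacing $f$ by $g=|f|^\delta$, using the doubling of $\tilde\varphi$, the layer-cake formula, and absorbing via finiteness of the left-hand side.

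The gap is in the localization in $Q_j$. This is the one step with real content; you flag it but do not carry it out, and as you phrase it, it fails. Comparison with $z_j$ cannot rule out cubes $R\ni y$ whose size is comparable to $\ell(Q_j)$. Such an $R$ may sit on the far side of $Q_j$ from $z_j$, and $z_j$ lies beyond the Whitney gap. So every cube $R'\supset R\cup\{z_j\}$ has $|R'|/|R|$ at least a dimensional constant well above $2$. You then only get $\langle|f|\rangle_R\le C_n\lambda/2$, which does not contradict $\langle|f|\rangle_R>\lambda$. For the same reason you cannot localize to the dilate $c_nQ_j$ that merely meets $\Omega_\lambda^c$. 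Since the ratio $2$ between the thresholds is fixed in the statement, ``arranging the constants so that the factor $2$ leaves room'' is exactly what needs an argument. Also, your opening remark that it suffices to treat dyadic $M$ and $M^\sharp$ does not apply to the displayed inequality, which concerns the non-dyadic operators at the thresholds $\lambda$ and $\lambda/2$. Your Whitney argument does not use that remark anyway.

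The repair is to exclude only \emph{much larger} cubes.
\begin{itemize}
\item Take Whitney cubes with $\operatorname{dist}(Q_j,\Omega_\lambda^c)\le 4\sqrt n\,\ell(Q_j)$, and pick $z_j\notin\Omega_\lambda$ with $|y-z_j|\le 5\sqrt n\,\ell(Q_j)$ for all $y\in Q_j$.
\item Let $R\ni y$ with $\ell(R)\ge\theta\,\ell(Q_j)$. The concentric cube $R'$ of side $\ell(R)+10\sqrt n\,\ell(Q_j)$ contains $z_j$. Moreover $|R'|\le(1+10\sqrt n/\theta)^n|R|\le 2|R|$ once $\theta\ge 10\sqrt n/(2^{1/n}-1)$. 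Hence $\langle|f|\rangle_R\le 2Mf(z_j)\le\lambda$.
\item Consequently every cube $R\ni y$ with $\langle|f|\rangle_R>\lambda$ has $\ell(R)<\theta\,\ell(Q_j)$, and so lies in $\widetilde Q_j:=(1+2\theta)Q_j$.
\item This larger cube still contains $z_j$. Therefore $|\langle f\rangle_{\widetilde Q_j}|\le\lambda/2$ exactly, and $M\big((f-\langle f\rangle_{\widetilde Q_j})\chi_{\widetilde Q_j}\big)(y)>\lambda/2$; that is, $A=2$.
\item The weak $(1,1)$ bound then gives $|E_j|\le C_n(1+2\theta)^n\varepsilon\,|Q_j|$.
\end{itemize}
The large localization cube costs only a dimensional constant. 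With this fix, the rest of your argument goes through as written.
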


\medskip
Next, we introduce the Orlicz spaces.

\medskip
\begin{definition}[{\bf Orlicz maximal function}]
	We call $\Phi:[0,\infty)\rightarrow [0,\infty)$ is  a Young function, if $\Phi$ is a continuous, convex and increasing function such that $$\displaystyle\lim_{t\rightarrow 0^+} \frac{\Phi(t)}{t}=\displaystyle\lim_{t\rightarrow\infty}\frac{t}{\Phi(t)}=0.$$
	The $\Phi$-average of a function $f$ over a cube $Q$ is defined as follows,
	$$\|f\|_{\Phi,Q}=\inf\{\la>0:\frac{1}{|Q|}\int_Q\Phi(\frac{|f(x)}{\la})dx\leq 1\}.$$
	The maximal function about $\Phi$ is defined as $$M_\Phi(f)(x)=\sup_{x\in Q}\|f\|_{\Phi,Q},$$where the supremum is taken over all the cubes containing $x$.
\end{definition}

\medskip
It's easy to observe that 
\iffalse
$$\|f\|_{\Phi,Q}>1\quad \text{if and only if}\quad \frac{1}{|Q|}\int_Q\Phi(|f(x)|)dx>1.$$
What's more, 
\fi
if $\Phi_1$ and $\Phi_2$ are Young functions with $\Phi_1(t)\leq \Phi_2(t)$ when $t\geq t_0>0$, then
$$\|f\|_{\Phi_1,Q}\leq C\|f\|_{\Phi_2,Q}.$$
In particular, if $\Phi(x)=e^{x^r}-1$, then we write $\|f\|_{\Phi,Q}=\|f\|_{\exp L^r,Q},$\ $M_\Phi(f)(x)=M_{\exp L^r}(f)(x)$. If $\Phi(x)=x(1+\log^+x)^r$, then we write $\|f\|_{\Phi,Q}=\|f\|_{L(\log L)^r,Q},\ M_\Phi(f)(x)=M_{L(\log L)^r}(f)(x)$. Notice that for any $r>0$, $M(f)\leq M_{L(\log L)^r}(f)$ and for any $k\in \mathbb{N}$, $M_{L(\log L)^k}\sim M^{k+1}$, where $M$ denotes the Hardy-Littlewood maximal function.
 
\begin{definition}[{\bf Orlicz space}]
	For $r\geq 1$, $Osc_{\exp L^r}$ space is defined as
	$$Osc_{\exp L^r}=\{f\in L_{\rm loc}^1(\rn):\|f\|_{Osc_{\exp L^r}}<\infty\},$$
	where $$\|f\|_{Osc_{\exp L^r}}=\sup_Q\|f-f_Q\|_{\exp L^r,Q}=\sup_Q\|f-f_Q\|_{e^{t^r}-1,Q}.$$
\end{definition}

\medskip
According to \cite{[PT]}, we can obtain $Osc_{\exp L^1}=\BMO(\rn)$, $Osc_{\exp L^r}\subsetneq \BMO(\rn)$
and $\|b\|_{\BMO}\leq C\|b\|_{Osc_{\exp L^r}}$ when $r>1$ .
Associated to each Young function $\Phi$, one can define a complementary function
$$\bar{\Phi}(s)=\sup_{t>0}\{st-\Phi(t)\}.$$
It's clear that $\bar{\Phi}$ is also a Young function. As a consequence, the following H\"older's inequality holds in Orlicz spaces,
\begin{equation}\label{Holder}
	\frac{1}{|Q|}\int_Q|f(x)g(x)|dx\leq 2\|f\|_{\Phi,Q}\|g\|_{\bar{\Phi},Q}.
\end{equation}
For more related results about Orlicz spaces, one can refer to \cite{O,RR}.

Finally, the following result of $M$ on weighted Morrey spaces is useful in our proof of main theorems.

\begin{lemma}[\cite{CF87}]\label{maximal}
	For $1<p<\infty, 0<\kappa<1$, and $w \in A_p$, we have 
	$$\|M f\|_{L^{p, \kappa}(w)} \leq C\|f\|_{L^{p, \kappa}(w)}.$$
\end{lemma}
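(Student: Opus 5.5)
This is a proposal for the last statement quoted, Lemma \ref{maximal}, the boundedness of $M$ on $L^{p,\kappa}(w)$. I would use the standard local/global splitting. Fix a ball $B=B(x_0,r)$ and write $f=f_1+f_2$ with $f_1=f\chi_{2B}$ and $f_2=f\chi_{(2B)^c}$. Then
$$\Big(\int_B|Mf|^pw\Big)^{1/p}\le\Big(\int_B|Mf_1|^pw\Big)^{1/p}+\Big(\int_B|Mf_2|^pw\Big)^{1/p},$$
and it suffices to bound each term by $Cw(B)^{\kappa/p}\|f\|_{L^{p,\kappa}(w)}$.

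\textbf{Local term.} Muckenhoupt's theorem gives $\|Mf_1\|_{L^p(w)}\le C\|f\|_{L^p(2B,w)}$. By the definition of the Morrey norm, the right side is at most $C\,w(2B)^{\kappa/p}\|f\|_{L^{p,\kappa}(w)}$. Lemma \ref{weights}(a) gives $w(2B)\le 2^{np}[w]_{A_p}w(B)$, which finishes this term.

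\textbf{Global term.} For $x\in B$, any cube or ball $Q\ni x$ that meets $(2B)^c$ has side comparable to at least $r$. It is therefore contained in some ball $2^{k+1}B$, $k\ge1$, of comparable measure. Hence
$$Mf_2(x)\le C\sum_{k\ge1}\frac{1}{|2^{k+1}B|}\int_{2^{k+1}B}|f|.$$
Hölder's inequality with the weight, together with the $A_p$ condition, gives
$$\frac{1}{|2^{k+1}B|}\int_{2^{k+1}B}|f|\le [w]_{A_p}^{1/p}\,w(2^{k+1}B)^{-1/p}\Big(\int_{2^{k+1}B}|f|^pw\Big)^{1/p}\le C\,w(2^{k+1}B)^{(\kappa-1)/p}\|f\|_{L^{p,\kappa}(w)}.$$
Multiplying by $w(B)^{1/p}$ and dividing by $w(B)^{\kappa/p}$ leaves the series
$$\sum_{k\ge1}\Big(\frac{w(B)}{w(2^{k+1}B)}\Big)^{(1-\kappa)/p}.$$

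\textbf{Main obstacle.} The real issue is the convergence of this series, which needs a reverse doubling property. I would use the reverse Hölder inequality, Lemma \ref{reverse-Holder}, which implies $w\in A_\infty$ with
$$\frac{w(E)}{w(Q)}\le C\Big(\frac{|E|}{|Q|}\Big)^{\delta},\qquad \delta=1-\frac{1}{r_w}>0.$$
Applying this with $E=B$ and $Q=2^{k+1}B$ gives
$$\frac{w(B)}{w(2^{k+1}B)}\le C\,2^{-(k+1)n\delta}.$$
The series is then geometric with ratio $2^{-n\delta(1-\kappa)/p}<1$, because $\kappa<1$. This is exactly where the hypothesis $\kappa<1$ is used.

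Adding the two estimates and taking the supremum over all balls $B$ gives the lemma.
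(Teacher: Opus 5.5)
Your proof is correct and complete. The paper gives no proof of this lemma; it only cites it. The cited Chiarenza--Frasca paper treats the unweighted Morrey spaces, and the weighted version in this form is due to Komori and Shirai. What you wrote is the standard proof of the weighted statement:
\begin{itemize}
\item the local piece $f\chi_{2B}$ is handled by Muckenhoupt's theorem together with the doubling property of Lemma \ref{weights}(a);
\item on $B$, $Mf_2$ is controlled by averages over the dilates $2^{k+1}B$, estimated through the $A_p$-H\"older inequality;
\item the resulting series converges by the reverse-doubling bound $w(B)/w(2^{k+1}B)\le C2^{-(k+1)n\delta}$, which follows from reverse H\"older. This is exactly where $\kappa<1$ is needed, as you observed.
\end{itemize}
One detail should be stated explicitly. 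Lemma \ref{reverse-Holder} (like the $A_p$ condition) is formulated for cubes. To apply it to the balls $B\subset 2^{k+1}B$, pass to a cube containing $2^{k+1}B$ of comparable measure, apply the estimate there, and return to $2^{k+1}B$ using the doubling property of Lemma \ref{weights}(a). This affects only the constants, which then depend on $n$, $p$, $\kappa$ and $[w]_{A_p}$.
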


	\section{Proof of Theorems }\label{proof}

To prove the main theorems, let us introduce an auxiliary $g_{\mu, \Psi}^*$ function. 	
Let $\varphi \in C_0^{\infty}(\mathbb{R})$ be even function with $\int_{\rn} \varphi(x)dx=1, \operatorname{supp} \varphi \subset(-1 / 10,1 / 10)$. Let $\Phi$ denote the Fourier transform of $\varphi$ and let $\Psi(s)=s^{2 n+2} \Phi^3(s)$. We define the $g_{\mu, \Psi}^*$ function by
$$
g_{\mu, \Psi}^*(f)(x)=\left(\iint_{\mathbb{R}_{+}^{n+1}}\left(\frac{t}{t+|x-y|}\right)^{n \mu}|\Psi(t \sqrt{L}) f(y)|^2 \frac{d y d t}{t^{n+1}}\right)^{1 / 2}, \quad \mu>1 .
$$	
Similarly, we can define the commutator of $g_{\mu, \Psi}^*$ as follows,
$$
g_{\mu, \Psi,b}^*(f)(x)=\left(\iint_{\mathbb{R}_{+}^{n+1}}\left(\frac{t}{t+|x-y|}\right)^{n \mu}\big|\Psi(t \sqrt{L})((b(x)-b(\cdot)) f)(y)\big|^2 \frac{d y d t}{t^{n+1}}\right)^{1 / 2}, \quad \mu>1 .
$$		

According to \cite{GY}, the $g_{\mu, \Psi}^*$ function can dominate the corresponding area integrals and is bounded on $L^p(w)$.
\begin{proposition}[{\cite[Propositions 3.3 and 3.4]{GY}}]
	Let $L$ be a non-negative self-adjoint operator, such that the corresponding heat kernels satisfy condition \eqref{(GE)}. Then for $f \in \mathcal{S}\left(\mathbb{R}^n\right)$, there exists a constant $C=C_{n, \mu, \Psi}$, such that the area integral $S_P$ satisfies the pointwise estimate		
	\begin{equation}\label{eq3.2}
		S_P f(x) \leq C g_{\mu, \Psi}^*(f)(x) .
	\end{equation}	
	Estimate \eqref{eq3.2} also holds for the area integral $S_H$.	
\end{proposition}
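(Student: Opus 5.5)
We prove the pointwise domination $S_Pf(x)\le C g^*_{\mu,\Psi}(f)(x)$ by a reproducing (Calderón-type) formula. The formula writes the integrand $t\sqrt L e^{-t\sqrt L}f$ as a superposition of the pieces $\Psi(s\sqrt L)f$, each having finite propagation speed by Lemma \ref{kernel}. Since $\Phi$ is the Fourier transform of the even, compactly supported, normalized $\varphi$, we have $\Phi(0)=1$ and $\Phi$ is real, even and Schwartz. Set $\Psi(s)=s^{2n+2}\Phi^3(s)$ and $c_\Psi=\int_0^\infty \Psi(s)^2\,\frac{ds}{s}\in(0,\infty)$. The spectral theorem then gives, for $f\in\mathcal S(\mathbb R^n)$ (modulo the null space of $L$, which $t\sqrt L e^{-t\sqrt L}$ annihilates),
$$
t\sqrt L e^{-t\sqrt L}f=\frac{1}{c_\Psi}\int_0^\infty t\sqrt L e^{-t\sqrt L}\Psi(s\sqrt L)\,\Psi(s\sqrt L)f\,\frac{ds}{s}.
$$

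Next we analyze the kernel of $T_{t,s}:=t\sqrt L e^{-t\sqrt L}\Psi(s\sqrt L)$. We factor $\Psi(s\sqrt L)=(s^2L)^{n+1}\Phi(s\sqrt L)\cdot\Phi^2(s\sqrt L)$. By Lemma \ref{kernel} (applied with $\varphi$ supported in $(-1/10,1/10)$, so the supports are of size $s/10$), the kernel of $(s^2L)^{k}\Phi(s\sqrt L)$ is supported in $|x-y|\le s/10$ and is bounded by $Cs^{-n}$.

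We then need size bounds of the form
$$
|K_{T_{t,s}}(x,y)|\le C\,\min\Big\{\big(\tfrac{s}{t}\big)^{a},\big(\tfrac{t}{s}\big)^{b}\Big\}\,s^{-n}\Big(1+\tfrac{|x-y|}{\max(s,t)}\Big)^{-N}.
$$
For $s\le t$ we move powers of $s^2L$ onto $e^{-t\sqrt L}$, gaining $(s/t)^{2n+2}$. We then use the Gaussian bound \eqref{(GE)} via subordination, since the Poisson kernel of $(t\sqrt L)^m e^{-t\sqrt L}$ is bounded by $Ct^{-n}(1+|x-y|/t)^{-n-1}$. Composing this with the compactly supported kernel of size $s^{-n}$ yields the bound above with $s^{-n}$ replaced by $t^{-n}$. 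For $s\ge t$ we use the factor $t\sqrt L$, which gives $t/s$ times a bounded spectral multiplier of $s\sqrt L$ together with the support/size bound, so the decay comes from $t/s$ with a kernel supported near the diagonal at scale $s$.

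With these bounds, for $|x-z|<t$ Cauchy–Schwarz in $(y,s)$ gives
$$
|t\sqrt L e^{-t\sqrt L}f(z)|^2\le C\int_0^\infty\!\!\int |K_{T_{t,s}}(z,y)|\,|\Psi(s\sqrt L)f(y)|^2\,dy\,\frac{ds}{s}\cdot\sup_{z}\int_0^\infty\!\!\int|K_{T_{t,s}}(z,y)|\,dy\,\frac{ds}{s},
$$
and the second factor is uniformly bounded. We insert this into the definition of $S_P$ and integrate first in $z\in B(x,t)$ and then in $t$. The task is to show
$$
\int_0^\infty\!\!\int_{|x-z|<t}|K_{T_{t,s}}(z,y)|\,\frac{dz\,dt}{t^{n+1}}\le C\,\frac{1}{s^n}\Big(\frac{s}{s+|x-y|}\Big)^{n\mu}.
$$
This is where the choice of power $2n+2$ in $\Psi$ is used. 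For $t\ge s$ the decay $(s/t)^{2n+2}$, combined with the polynomial kernel decay of order $N>n\mu$, absorbs the growth as $|x-y|\gg s$. For $t\le s$ the kernel is essentially localized at scale $s$, so $|x-y|\lesssim s+t\lesssim s$ and the weight is harmless. Summing both ranges gives exactly the integrand of $g^*_{\mu,\Psi}$.

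The $S_H$ case is identical, with $t^2Le^{-t^2L}$ replacing $t\sqrt Le^{-t\sqrt L}$; the Gaussian bound \eqref{(GE)} gives even better off-diagonal decay. The main obstacle is the large-$t$ regime, where we must quantify $|x-y|$ relative to $t$ and $s$ carefully so that the exponent $n\mu$ (with $\mu>1$ arbitrary but fixed) is attained. If the polynomial decay from the Poisson kernel is insufficient there, we first pick $\mu$ close enough to $1$, or use Gaussian decay in the $S_H$ case, and then increase $\mu$ by noting that the spare powers of $s/t$ can be traded for decay in $|x-y|/t$.
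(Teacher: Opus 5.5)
Your architecture is the natural one for this result, which the paper only quotes from \cite{GY}: spectral reproducing formula, size bounds for the kernel of $T_{t,s}=t\sqrt L e^{-t\sqrt L}\Psi(s\sqrt L)$, Cauchy--Schwarz, and reduction to the bracket $\int_0^\infty\int_{|x-z|<t}|K_{T_{t,s}}(z,y)|\,dz\,dt/t^{n+1}$. However, the decisive bound on the bracket is not proved for $S_P$.

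\textbf{Insufficient decay.} The only spatial decay you derive is the Poisson decay $(1+|x-y|/t)^{-(n+1)}$. With it the bracket is of size $s^{-n}(s/|x-y|)^{n+1}$ when $|x-y|\gg s$. For instance, the range $s<t<|x-y|$ contributes $\int_s^{|x-y|}(s/t)^{2n+2}(t/|x-y|)^{n+1}t^{-n-1}\,dt\approx s^{-n}(s/|x-y|)^{n+1}$, and this is dominated by $t\approx s$. In general, a gain $(s/t)^a$ together with decay of order $N$ gives the exponent $\min\{N,a+n\}$. So ``spare powers of $s/t$'' cannot make up for missing spatial decay: the worst scale is $t\approx s$, where there are none. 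Your argument therefore yields only $\mu\le 1+\frac1n$.

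\textbf{The fallback goes the wrong way.} Taking $\mu$ close to $1$ and then increasing it does not work, because $(t/(t+|x-y|))^{n\mu}$ decreases in $\mu$. Hence $g^*_{\mu,\Psi}\le g^*_{\mu',\Psi}$ for $\mu\ge\mu'$, and a domination for small $\mu$ does not give the one with $\mu>3$ that the paper needs together with Lemmas \ref{upperbound} and \ref{th3.2}.

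\textbf{False support claim.} Your claim that for $s\ge t$ the kernel is supported at scale $s$ is false. The operator $e^{-t\sqrt L}$ has no finite propagation speed, and Lemma \ref{kernel} concerns only the even multipliers $(s^2L)^k\Phi(s\sqrt L)$, not $(s\sqrt L)^{2n+3}\Phi^3(s\sqrt L)$. So for $t\le s$ there is also a polynomial tail to control.

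\textbf{The missing idea.} The factor $s^{2n+2}$ in $\Psi$ is there to improve the decay of the Poisson factor, not only to produce $(s/t)^{2n+2}$. By subordination,
\[
(t^2L)^k\,t\sqrt L e^{-t\sqrt L}=\frac{2}{\sqrt\pi}\int_0^\infty e^{-u}u^{-1/2}(4u)^k\,(\tau L)^{k+1}e^{-\tau L}\Big|_{\tau=t^2/(4u)}\,du .
\]
The Gaussian bounds for $(\tau L)^m e^{-\tau L}$, which follow from \eqref{(GE)} for self-adjoint $L$, then give the kernel bound $Ct^{-n}(1+|x-y|/t)^{-(n+1+2k)}$. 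Write $T_{t,s}=(s/t)^{2n+2}\big[(t^2L)^{n+1}t\sqrt Le^{-t\sqrt L}\big]\Phi^3(s\sqrt L)$ and compose with the compactly supported kernel of $\Phi^3(s\sqrt L)$. This yields:
\begin{itemize}
\item for $t\ge s$: $|K_{T_{t,s}}(x,y)|\lesssim (s/t)^{2n+2}t^{-n}(1+|x-y|/t)^{-(3n+3)}$;
\item for $t\le s\le |x-y|$: the tail $(t/s)\,s^{-n}(s/|x-y|)^{3n+3}$;
\item near the diagonal when $t\le s$: write $T_{t,s}=(t/s)\,e^{-t\sqrt L}\,(s\sqrt L)^{2n+3}\Phi^3(s\sqrt L)$ to get $(t/s)s^{-n}$.
\end{itemize}
With these bounds the bracket is $\lesssim s^{-n}\big(s/(s+|x-y|)\big)^{3n+2}$, so the domination holds for $1<\mu\le 3+\frac2n$.

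This range, not ``arbitrary $\mu>1$'', is what the method delivers. It is exactly why the power $2n+2$ was chosen: it leaves room for some $\mu>3$. For $S_H$ your sketch is essentially fine, since the Gaussian supplies any $N$, but the cap $\mu\le 3+\frac2n$ coming from $a+n=3n+2$ remains.
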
	

\begin{lemma}[{\cite[Theorem 1.4]{GY}}]\label{upperbound}
	Let $L$ be a non-negative self-adjoint operator such that the corresponding heat kernels satisfy Gaussian bounds \eqref{(GE)} and $T$ be one of the area functions $S_P, S_H$ and $g_{\mu, \Psi}^*$ with $\mu>3$. If $1<p<\infty$ and $w \in A_p$, then there exists a constant $C$, such that
	\begin{equation}\label{wb}
		\left\|Tf\right\|_{L^{p}(w)} \leq C[w]_{A_p}^{\alpha_p+1/(p-1)}\|f\|_{L^p(w)},
	\end{equation}
	where $\alpha_p=\max\{\frac{1}{2},\frac{1}{p-1}\}$.	
\end{lemma}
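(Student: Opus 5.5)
The plan is to reduce the three square functions to a family of truncated area integrals with finite-propagation kernels, and then apply sparse domination. \textbf{Step 1} reduces everything to $g_{\mu,\Psi}^*$. By \eqref{eq3.2}, $S_Pf\le C g_{\mu,\Psi}^*f$ and $S_Hf\le Cg_{\mu,\Psi}^*f$ pointwise, so it suffices to prove \eqref{wb} for $T=g_{\mu,\Psi}^*$ with $\mu>3$. Splitting $\mathbb{R}^{n+1}_+$ according to $|x-y|<t$ and $2^{k-1}t\le |x-y|<2^kt$ gives
\begin{equation*}
g_{\mu,\Psi}^*f(x)\le C\sum_{k\ge 0}2^{-kn\mu/2}\,S^{(2^k)}f(x),\qquad S^{(\alpha)}f(x)=\Big(\int_0^\infty\!\!\int_{|x-y|<\alpha t}|\Psi(t\sqrt L)f(y)|^2\frac{dy\,dt}{t^{n+1}}\Big)^{1/2}.
\end{equation*}
It therefore suffices to prove $\|S^{(\alpha)}f\|_{L^p(w)}\le C\alpha^{N}[w]_{A_p}^{\alpha_p+1/(p-1)}\|f\|_{L^p(w)}$ for some fixed power $N<n\mu/2$. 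The geometric series then converges, and this is where the restriction on $\mu$ is used.

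\textbf{Step 2} collects the kernel facts. Since $\Psi(s)=s^{2n+2}\Phi^3(s)$ and $\Phi^3$ is the Fourier transform of the even function $\varphi*\varphi*\varphi$, which is supported in $(-3/10,3/10)$, Lemma \ref{kernel} (with $k=n+1$) gives two properties. The kernel of $\Psi(t\sqrt L)$ is supported in $\{|x-y|\le t\}$, and it is bounded by $Ct^{-n}$. In addition, $S^{(1)}$ is bounded on $L^2$ by the spectral theorem, since $\int_0^\infty|\Psi(t\lambda)|^2dt/t<\infty$, and the change of aperture costs $\alpha^{n/2}$ on $L^2$. Finite propagation speed makes the operator local: for a cube $Q$ and $x\in Q$, the part of the cone with $t\le \ell(Q)$ only sees $f\chi_{3\alpha Q}$. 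The part with $t>\ell(Q)$ varies little across $Q$, because the truncated cones at two points of $Q$ differ only in a region controlled by $t^{-n}$-bounded kernels acting on averages of $|f|$.

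\textbf{Step 3} is a sparse bound for $S^{(\alpha)}$, which is the main step. I would run Lerner's local mean oscillation argument on $(S^{(\alpha)}f)^2$. Two ingredients are needed. The first is a weak $(1,1)$ bound for the localized operator, obtained from the $L^2$ bound together with the support and size bounds via a Duong--McIntosh type Calder\'on--Zygmund decomposition, in which $f-\text{averages}$ are replaced by $b_j-e^{-\ell_j^2L}b_j$, or simply by the compact support. The second is the oscillation estimate from Step 2. The expected outcome is
\begin{equation*}
(S^{(\alpha)}f(x))^2\le C\alpha^{2n}\sum_{Q\in\mathcal S}\langle |f|\rangle_{Q}^2\chi_Q(x)+C\alpha^{2n}\,(Mf(x))^2 ,
\end{equation*}
with $\mathcal S$ sparse. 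The second term is the price of having no pointwise smoothness of the kernel, because the large-$t$ oscillation is controlled only by maximal averages. This step is the main obstacle: one must keep the aperture dependence polynomial and make sure that the tail in $t$ produces only a maximal-function error rather than a worse one.

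\textbf{Step 4} assembles the weighted bounds. The sparse square function satisfies $\|(\sum_{Q\in\mathcal S}\langle|f|\rangle_Q^2\chi_Q)^{1/2}\|_{L^p(w)}\le C[w]_{A_p}^{\alpha_p}\|f\|_{L^p(w)}$ by Lerner's sharp bound, and Buckley's estimate gives $\|Mf\|_{L^p(w)}\le C[w]_{A_p}^{1/(p-1)}\|f\|_{L^p(w)}$. Since $[w]_{A_p}\ge1$, both terms are bounded by $[w]_{A_p}^{\alpha_p+1/(p-1)}$, which yields \eqref{wb} for $S^{(\alpha)}$, hence for $g_{\mu,\Psi}^*$, and by Step 1 for $S_P$ and $S_H$.

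If the additive error in Step 3 cannot be isolated cleanly, the fallback is the sharp-function route. One shows $M^\sharp_\delta(S^{(\alpha)}f)\le C\alpha^{n}Mf$ and then applies Lemma \ref{lemwi}(ii) with the weight $w$ and Buckley's bound. In that case the constant $[w]_{A_\infty}$ from Lemma \ref{lemwi} has to be tracked, and it is bounded by $[w]_{A_p}$. This route also produces the product structure $[w]_{A_p}^{\alpha_p}\cdot[w]_{A_p}^{1/(p-1)}$.
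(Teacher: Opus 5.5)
The paper does not prove this lemma; it imports it from \cite{GY}. Your proposal therefore has to stand on its own. Its architecture is reasonable: reduce via \eqref{eq3.2}, dominate $(S^{(\alpha)}f)^2$ through Lerner's local mean oscillation formula using finite propagation and weak $(1,1)$, then apply the sparse square-function bound and Buckley. But Step 3, which carries all the weight, does not deliver the bound you display, for two concrete reasons.

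\emph{Tails.} The oscillation over $Q$ of the part $t>\ell(Q)$ that your Step 2 mechanism actually yields has the form $\sum_{k\ge0}2^{-k}\langle|f|\rangle^2_{2^k\alpha Q}$. This is because the cones at $x,x'\in Q$ differ at height $t$ on a set of measure $\lesssim(\alpha t)^{n-1}\ell(Q)$, integrated against $t^{-n-1}dt$. In Lerner's formula this tail multiplies $\chi_Q$ for every cube of the sparse family. It is not $\langle|f|\rangle_Q^2$, and it cannot be moved into the additive $(Mf(x))^2$, since a point may lie in unboundedly many cubes of $\mathcal S$; that additive term only absorbs the $M^\sharp_\lambda$ term. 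Indeed, your displayed bound would give the sharp exponent $\alpha_p$, which is more than this argument supports. The repair is to use $\langle|f|\rangle_{2^k\alpha Q}\le\inf_Q Mf$, which gives $(S^{(\alpha)}f)^2\lesssim\sum_{Q\in\mathcal S}\langle Mf\rangle_Q^2\chi_Q+(Mf)^2$. The two weighted bounds then compose as $[w]_{A_p}^{\alpha_p}\cdot[w]_{A_p}^{1/(p-1)}$, which is exactly the exponent in \eqref{wb}.

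\emph{Aperture.} Polynomial growth is not enough: to cover all $\mu>3$ for $T=g^*_{\mu,\Psi}$ you need $N<n\mu/2$, i.e.\ $N\le 3n/2$. With your localization at $t\le\ell(Q)$, the local piece sees $f\chi_{c\alpha Q}$, so Kolmogorov gives $(A_1)^*(\lambda|Q|)\lesssim\alpha^n\|S^{(\alpha)}\|_{L^1\to L^{1,\infty}}\langle|f|\rangle_{c\alpha Q}$. The weak-type norm is itself $\gtrsim\alpha^n$ (test an approximate identity with $L=-\Delta$). That gives $N=2n$, which only covers $\mu>4$. Localize instead at $t\le\ell(Q)/\alpha$, so that the local piece sees only $f\chi_{cQ}$ with $c$ independent of $\alpha$. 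The symmetric-difference estimate for $t>\ell(Q)/\alpha$ then costs $\alpha^{3n}$ on the square, i.e.\ $N=3n/2$, which is admissible precisely when $\mu>3$. For $S_P$ and $S_H$ alone this issue is moot, since \eqref{eq3.2} may be used with large $\mu$. Alternatively, skip the aperture decomposition altogether, as in the estimate of $A_2$ in the proof of Lemma~\ref{sharpbound}. There the factor $(t/(t+|x-y|))^{n\mu}$ is Lipschitz in $x$, so the far part of $g^*_{\mu,\Psi}$ oscillates little across $Q$ directly, and $\mu>3$ enters only through $\int_0^{2^{k+1}r_B}t^{n\mu-3n-1}\,dt<\infty$.

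Two further points. First, the fallback does not give the product structure you claim. Lemma~\ref{lemwi}(ii) for $M_\delta$ is the Fefferman--Stein inequality for $|F|^\delta$ in $L^{p/\delta}(w)$ raised to the power $1/\delta$. Its constant is therefore of order $[w]_{A_\infty}^{1/\delta}$, with $\delta<1$ forced by Kolmogorov. Combined with Buckley, this gives the exponent $1/\delta+1/(p-1)$, which exceeds $\alpha_p+1/(p-1)$ for every $p\ge2$. Second, in the weak $(1,1)$ step, ``simply by the compact support'' does not work. Without cancellation, $S^{(\alpha)}b_j(x)$ decays only like $\|b_j\|_1|x-x_j|^{-n}$ away from $Q_j$, which is not integrable. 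The correction $b_j-e^{-\ell_j^2L}b_j$, which uses \eqref{(GE)}, is genuinely needed.
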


Gong considered the boundedness of $g_{\mu, \Psi}^*$ on $ L^{p, \kappa}(w)$ in \cite{Gong}.

\begin{lemma}[\cite{Gong}]\label{th3.2}
	Let L be a non-negative self-adjoint operator, such that the corresponding heat kernels satisfy Gaussian bounds \eqref{(GE)}. Let $\mu>3$, $1<p<\infty$, and $0<\kappa<1$. If $w \in A_p$, then there exists a constant $C$ such that
	$$
	\left\|g_{\mu, \Psi}^* f\right\|_{L^{p, \kappa}(w)} \leq C\|f\|_{L^{p, \kappa}(w)},
	$$
	for all $f \in L^{p, \kappa}(w)$.
\end{lemma}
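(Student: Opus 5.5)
The plan is the standard localization argument for Morrey spaces. It uses the weighted $L^p$ bound of Lemma \ref{upperbound} for the local part and a size estimate for the far part. The far-part estimate comes from the finite propagation speed encoded in Lemma \ref{kernel}.

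\emph{Splitting.} Fix a ball $B=B(x_0,r)$ and write
$$f=f_0+\sum_{k\ge1}f_k,\qquad f_0=f\chi_{2B},\quad f_k=f\chi_{2^{k+1}B\setminus 2^kB}.$$
Since $g_{\mu,\Psi}^*$ is an $L^2(dy\,dt)$-norm of a linear expression in $f$, it is sublinear. Hence
$$g_{\mu,\Psi}^*f\le g_{\mu,\Psi}^*f_0+\sum_{k\ge1}g_{\mu,\Psi}^*f_k .$$

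\emph{Local part.} By Lemma \ref{upperbound} (with $\mu>3$) and the doubling property in Lemma \ref{weights}(a),
$$\int_B|g_{\mu,\Psi}^*f_0|^pw\le C\int_{2B}|f|^pw\le C\|f\|_{L^{p,\kappa}(w)}^p\,w(2B)^\kappa\le C\|f\|_{L^{p,\kappa}(w)}^p\,w(B)^\kappa .$$

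\emph{Far part: kernel facts.} Note that $\Phi^3$ is the Fourier transform of $\varphi*\varphi*\varphi$, which is even and supported in $(-3/10,3/10)$. Also $\Psi(t\sqrt L)=(t^2L)^{n+1}\Phi^3(t\sqrt L)$. So Lemma \ref{kernel} gives that the kernel $K_t(y,z)$ of $\Psi(t\sqrt L)$ satisfies
$$|K_t(y,z)|\le Ct^{-n}\chi_{\{|y-z|\le t\}}.$$

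\emph{Far part: pointwise bound.} For $x\in B$, Minkowski's inequality gives
$$g_{\mu,\Psi}^*f_k(x)\le C\int|f_k(z)|\Big(\iint\Big(\frac{t}{t+|x-y|}\Big)^{n\mu}t^{-2n}\chi_{\{|y-z|<t\}}\frac{dy\,dt}{t^{n+1}}\Big)^{1/2}dz .$$
Here $z\in 2^{k+1}B\setminus2^kB$, so $|x-z|\sim 2^kr$, and $t+|x-y|\ge|x-z|-|y-z|+t\gtrsim 2^kr$. Split the $t$-integral at $t=2^kr$.
\begin{itemize}
\item For $t<2^kr$, the $y$-integral has measure $\lesssim t^n$ and the weight factor is $\lesssim (t/2^kr)^{n\mu}$. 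The contribution is $\lesssim (2^kr)^{-n\mu}\int_0^{2^kr}t^{n\mu-2n-1}\,dt\sim(2^kr)^{-2n}$, which is finite because $\mu>2$.
\item For $t\ge2^kr$, bound the weight factor by $1$. The contribution is $\lesssim\int_{2^kr}^\infty t^{-2n-1}\,dt\sim(2^kr)^{-2n}$.
\end{itemize}
Therefore
$$g_{\mu,\Psi}^*f_k(x)\le \frac{C}{|2^{k+1}B|}\int_{2^{k+1}B}|f| .$$
This pointwise estimate is the main step to get right. The rest is routine once the compact support of $K_t$ is used to force $t+|x-y|\gtrsim 2^kr$.

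\emph{Summation.} By Hölder's inequality and the $A_p$ condition,
$$\frac1{|Q|}\int_Q|f|\le C[w]_{A_p}^{1/p}\,w(Q)^{-1/p}\Big(\int_Q|f|^pw\Big)^{1/p}\le C\|f\|_{L^{p,\kappa}(w)}\,w(2^{k+1}B)^{(\kappa-1)/p},$$
with $Q=2^{k+1}B$. Hence
$$w(B)^{-\kappa/p}\Big(\int_B|g_{\mu,\Psi}^*f_k|^pw\Big)^{1/p}\le C\|f\|_{L^{p,\kappa}(w)}\Big(\frac{w(B)}{w(2^{k+1}B)}\Big)^{(1-\kappa)/p}.$$
Since $w\in A_p\subset A_\infty$, there is $\delta>0$ with $w(B)/w(2^{k+1}B)\le C(|B|/|2^{k+1}B|)^\delta=C2^{-(k+1)n\delta}$. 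Because $\kappa<1$, the resulting series is geometric and converges. Taking the supremum over $B$ gives the claimed bound.
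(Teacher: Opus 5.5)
Your argument is correct. The paper does not prove this lemma: it quotes it from \cite{Gong} and uses it as a black box in the proof of Theorem \ref{morreybound}, so there is no internal proof to match, and yours is a genuine self-contained substitute.

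What you give is the standard Morrey localization, and every ingredient is already in the paper:
\begin{itemize}
\item the weighted $L^p$ bound of Lemma \ref{upperbound} applied to $f\chi_{2B}$;
\item doubling from Lemma \ref{weights}(a);
\item for the far pieces, the support and size bound of Lemma \ref{kernel}, applied to $\Psi(t\sqrt L)=(t^2L)^{n+1}\Phi^3(t\sqrt L)$ with $\Phi^3$ (up to normalization) the transform of the even function $\varphi*\varphi*\varphi$ supported in $(-3/10,3/10)$.
\end{itemize}
Your key observation, $t+|x-y|\ge|x-z|$ whenever $|y-z|\le t$, is the same finite-propagation mechanism the paper exploits in \eqref{support} and in the annular decomposition in the proof of Lemma \ref{sharpbound}.

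Compared with the bare citation, your route makes two things visible:
\begin{itemize}
\item The far part needs only $\mu>2$, so $\mu>3$ enters solely through Lemma \ref{upperbound}.
\item It gives an explicit dependence of the Morrey constant on $[w]_{A_p}$. You can take the decay $w(B)/w(2^{k+1}B)\le 2\cdot 2^{-(k+1)n\delta}$ from Lemma \ref{reverse-Holder} with $\delta=1-1/r_w\approx(2^{n+2p+1}[w]_{A_p})^{-1}$. The geometric sum then contributes a factor of order $[w]_{A_p}$, with constants depending on $n,p,\kappa$.
\end{itemize}

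One point to state explicitly: for $f\in L^{p,\kappa}(w)$, which need not lie in $L^2$, you are defining $\Psi(t\sqrt L)f(y)$ through its bounded kernel supported in $\{|y-z|\le t\}$. That is what makes the splitting $f=f_0+\sum_k f_k$ legitimate, since only finitely many $f_k$ contribute at each $(y,t)$, and what justifies your use of Minkowski's integral inequality on $f_k\in L^1$.
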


%\medskip
Now we are at the position to prove our main theorems. Due to \eqref{eq3.2}, in order to prove Theorem \ref{lebesgue bound} and Theorem \ref{morreybound}, it suffices to show the corresponding results for $g_{\mu, \Psi,b}^*$.

%\medskip
\begin{proof}[\bf Proof of Theorem \ref{lebesgue bound}] 
	
	Since $w\in A_p$,	by (c) of Lemma \ref{weights}, we can find
	an $\epsilon>0$ such that $w^{1+\epsilon}\in A_p$. So by the assumption of induction, it holds that
	\begin{equation}\label{f1}
		\|g_{\mu, \Psi}^*(\varphi)\|_{L^p(w^{1+\epsilon})}\leq C_{p,n,\epsilon}[w]_{A_p}^{(1+\epsilon)(\alpha_p+1/(p-1))}\|\varphi\|_{L^p(w^{1+\epsilon})}, \quad \varphi\in L^p(w^{1+\epsilon}) 
	\end{equation}
	for $\mu>3$.
	Now we take $\delta={p\gamma\cos\theta(1+\epsilon)}/{\epsilon}$ in Lemma \ref{JN3} and choose $\gamma=\frac{\epsilon\alpha_n}{p(1+\epsilon)\| b\| _{\BMO}}\min\{1, \frac{1}{p-1}\}$ such that $ |\delta|\leq \frac{\alpha_n}{\| b\| _{\BMO}}\min\{1, \frac{1}{p-1}\}$, then by Lemma \ref{JN3}, $e^{{pb\gamma\cos\theta(1+\epsilon)}/{\epsilon}}\in A_p$ and
	$$\big[e^{{pb\gamma\cos\theta(1+\epsilon)}/{\epsilon}}\big]_{A_p}\leq\beta_n^p.$$

	By the weighted boundedness of $g_{\mu, \Psi}^*$, we obtain that for any $\theta\in[0,2\pi],\
	\varphi\in L^p(e^{{pb\gamma\cos\theta(1+\epsilon)}/{\epsilon}}),$
	\begin{equation}\label{f2}
		\|g_{\mu, \Psi}^*(\varphi)\|_{L^p(e^{{pb\gamma\cos\theta(1+\epsilon)}/{\epsilon}})}\leq C_{n,p}[e^{{pb\gamma\cos\theta(1+\epsilon)}/{\epsilon}}]_{A_p}^{\alpha_p+1/(p-1)}
		\|\varphi\|_{L^p(e^{{pb\gamma\cos\theta(1+\epsilon)}/{\epsilon}})}.
	\end{equation}
	Applying the Stein-Weiss interpolation theorem with change of measures \cite{SW58} between
	(\ref{f1}) and (\ref{f2}), we have for any $\theta\in [0, 2\pi]$ and $\varphi\in L^p(w e^{pb\gamma\cos\theta})$,
	\begin{equation*}
		\|g_{\mu, \Psi,b}^*(\varphi)\|_{L^p(w e^{{pb\gamma\cos\theta}})}\leq C_{n,p}
		(\beta_n^{{p\epsilon}/{(1+\epsilon)}}[w]_{A_p})^{\alpha_p+1/(p-1)}\|\varphi\|_{L^p(w e^{{pb\gamma\cos\theta}})}.
	\end{equation*}

	Denote $F(z)=e^{z(b(x)-b(y))},\ z\in \mathbb C.$ Then by the analyticity of $F(z)$ on $\mathbb C$ and the Cauchy integration formula, we have
	$$b(x)-b(y)=F'(0)=\frac{1}{2\pi i}\int_{|z|=\gamma}\frac{F(z)}{z^2}dz=\frac{1}{2\pi \gamma}\int_0^{2\pi}e^{\gamma e^{i\theta}(b(x)-b(y))}e^{-i\theta}d\theta.$$
	Therefore by Minkowski's inequality, we have
	\begin{align*}
		&g_{\mu, \Psi,b}^* (f)(x)\\
		&=\left(\iint_{\mathbb{R}_{+}^{n+1}}\left(\frac{t}{t+|x-y|}\right)^{n \mu}\big|\Psi(t \sqrt{L})((b(x)-b(\cdot)) f)(y)\big|^2 \frac{d y d t}{t^{n+1}}\right)^{1 / 2}\\
		&=	\left(\iint_{\mathbb{R}_{+}^{n+1}}\left(\frac{t}{t+|x-y|}\right)^{n \mu}\Big|\int_{\mathbb R^n}\Big(\frac{1}{2\pi \gamma}\int_0^{2\pi}e^{\gamma e^{i\theta}(b(x)-b(y))}e^{-i\theta}d\theta\Big)K_{\Psi(t \sqrt{L})}(x,y)f(y)dy \Big|^2 \frac{d y d t}{t^{n+1}}\right)^{1 / 2}\\
		&=\frac{1}{2\pi \gamma}	\left(\iint_{\mathbb{R}_{+}^{n+1}}\left(\frac{t}{t+|x-y|}\right)^{n \mu}\Big|\int_0^{2\pi}\int_{\mathbb R^n}e^{-\gamma e^{i\theta}b(y)}K_{\Psi(t \sqrt{L})}(x,y)f(y)dye^{\gamma e^{i\theta} b(x)}e^{-i\theta}d\theta \Big|^2 \frac{d y d t}{t^{n+1}}\right)^{1 / 2}\\
		&\leq \frac{1}{2\pi\gamma}\int_0^{2\pi}	\left(\iint_{\mathbb{R}_{+}^{n+1}}\left(\frac{t}{t+|x-y|}\right)^{n \mu}\Big|\int_{\mathbb R^n}K_{\Psi(t \sqrt{L})}(x,y)e^{-\gamma e^{i\theta}b(y)}f(y)dy\Big|^2 \frac{d y d t}{t^{n+1}}\right)^{1 / 2}|e^{\gamma e^{i\theta} b(x)}|d\theta\\
		&=\frac{1}{2\pi\gamma}\int_0^{2\pi}g_{\mu, \Psi}^* (fe^{-\gamma e^{i\theta}b(\cdot)})(x)e^{\gamma b(x) \cos\theta}d\theta.
	\end{align*}
	Note that $fe^{-\gamma b(\cdot)e^{i\theta}}\in L^p(w e^{pb\gamma\cos\theta})$
	and $\|fe^{-\gamma b(\cdot)e^{i\theta}}\|_{L^p(w e^{pb\gamma\cos\theta})}=\|f\|_{L^p(w)}$, using Lemma \ref{upperbound} and Minkowski's inequality again, it yields that
	\begin{align*}
		\|g_{\mu, \Psi,b}^*f\|_{L^p(w)}&\leq \frac{1}{2\pi\gamma}\Big(\int_{\mathbb R^n}\Big|\int_0^{2\pi}g_{\mu, \Psi}^* (fe^{-\gamma e^{i\theta}b(\cdot)})(x)e^{\gamma b(x) \cos\theta}d\theta\Big|^pw(x)dx\Big)^{\frac{1}{p}}\\
		&\leq\frac{1}{2\pi\gamma}\int_0^{2\pi}\Big(\int_{\mathbb R^n}g_{\mu, \Psi}^*(fe^{-b(\cdot)\gamma e^{i\theta}})^p(x)e^{pb(x)\gamma \cos\theta}w(x)dx\Big)^{\frac{1}{p}} d\theta\\
		&\leq C[w]_{A_p}^{\alpha_p+1/(p-1)}
		\frac{1}{2\pi\gamma}\int_0^{2\pi}
		\|fe^{-\gamma b(\cdot)e^{i\theta}}\|_{L^p(w e^{pb\gamma\cos\theta})}d\theta\\
		&=C[w]_{A_p}^{\alpha_p+1/(p-1)}\|f\|_{L^p(w)},
		%&\leq C_{n,p,b,\epsilon}[w]_{A_p}^{\max\{1,\frac{1}{p-1}\}}\|f\|_{L^p(w)},
	\end{align*}
	which completes the proof of Thorem \ref{lebesgue bound}.
\end{proof}

\medspace
To prove Theorem \ref{morreybound}, we need the following estimate for sharp maximal function of $g_{\mu, \Psi}^*$.
\begin{lemma}\label{sharpbound}
	Let $L$ be a non-negative self-adjoint operator, such that the corresponding heat kernels satisfy Gaussian bounds \eqref{(GE)}. If $\mu>3$ and $0 < \delta <\varepsilon < 1$, then for any $b\in \|b\|_{Osc_{\exp L^r}},\ r\geq1$, there is a constant $C>0$ such that
	\begin{equation}\label{sharp}
		M_\delta^{\sharp}\left(g_{\mu, \Psi,b}^*f\right)(x) \leq C \|b\|_{Osc_{\exp L^r}}\Big(M_{\varepsilon}(g_{\mu, \Psi}^*({f}))(x)+M_{L(\log L)^{1/r}} f(x) \Big).
	\end{equation}
\end{lemma}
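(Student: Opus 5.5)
Fix $x$ and a cube $Q\ni x$ with center $x_Q$ and side $\ell$. Let $\tilde Q=8\sqrt n\,Q$ and $\lambda=b_{\tilde Q}$. Write $b(x)-b(y)=(b(x)-\lambda)-(b(y)-\lambda)$. By Minkowski's inequality in the square-function norm,
$$g^*_{\mu,\Psi,b}f(z)\le |b(z)-\lambda|\,g^*_{\mu,\Psi}f(z)+g^*_{\mu,\Psi}((b-\lambda)f_1)(z)+g^*_{\mu,\Psi}((b-\lambda)f_2)(z),$$
with $f_1=f\chi_{\tilde Q}$ and $f_2=f-f_1$. Since $0<\delta<1$, $\big||a|^\delta-|c|^\delta\big|\le |a-c|^\delta$, so it is enough to find a constant $c_Q$ with
$$\Big(\frac1{|Q|}\int_Q|g^*_{\mu,\Psi,b}f(z)-c_Q|^\delta dz\Big)^{1/\delta}\lesssim \|b\|_{Osc_{\exp L^r}}\big(M_\varepsilon(g^*_{\mu,\Psi}f)(x)+M_{L(\log L)^{1/r}}f(x)\big).$$
We take $c_Q=g^*_{\mu,\Psi}((b-\lambda)f_2)(x_Q)$. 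The left side is then controlled by three terms $I$, $II$, $III$.

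\emph{Term $I$.} Apply H\"older with exponents $\varepsilon/\delta$ and its conjugate:
$$I\le\Big(\frac1{|Q|}\int_Q|b-\lambda|^{\delta s'}\Big)^{1/(\delta s')}\Big(\frac1{|Q|}\int_Q (g^*f)^\varepsilon\Big)^{1/\varepsilon}\lesssim \|b\|_{\BMO}M_\varepsilon(g^*f)(x).$$
The first factor is bounded by John--Nirenberg, and $\|b\|_{\BMO}\lesssim\|b\|_{Osc_{\exp L^r}}$.

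\emph{Term $II$.} By Kolmogorov's inequality, $II\lesssim |Q|^{-1}\|g^*((b-\lambda)f_1)\|_{L^{1,\infty}}$. This needs the weak $(1,1)$ bound for $g^*_{\mu,\Psi}$ with $\mu>3$; that bound is known from \cite{GY} via the Gaussian bound and the finite propagation speed in Lemma \ref{kernel}. Then the Orlicz H\"older inequality \eqref{Holder}, with $\exp L^r$ dual to $L(\log L)^{1/r}$, gives
$$\frac1{|Q|}\int_{\tilde Q}|b-\lambda||f|\lesssim \|b-\lambda\|_{\exp L^r,\tilde Q}\|f\|_{L(\log L)^{1/r},\tilde Q}\lesssim\|b\|_{Osc_{\exp L^r}}M_{L(\log L)^{1/r}}f(x).$$

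\emph{Term $III$, the main obstacle.} Here $f_2$ lives off $\tilde Q$, and we must estimate
$$|g^*((b-\lambda)f_2)(z)-g^*((b-\lambda)f_2)(x_Q)|\le\Big(\iint\Big|\Big(\tfrac{t}{t+|z-y|}\Big)^{n\mu/2}-\Big(\tfrac{t}{t+|x_Q-y|}\Big)^{n\mu/2}\Big|^2|\Psi(t\sqrt L)((b-\lambda)f_2)(y)|^2\frac{dy\,dt}{t^{n+1}}\Big)^{1/2}.$$
This difference estimate uses no kernel regularity, which matters because only \eqref{(GE)} is assumed here.
\begin{enumerate}
\item Lemma \ref{kernel}, applied to $\Psi(s)=s^{2n+2}\Phi^3(s)$, shows that $K_{\Psi(t\sqrt L)}$ is supported in $|y-u|\le t$ and bounded by $Ct^{-n}$. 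So $\Psi(t\sqrt L)((b-\lambda)f_2)(y)$ vanishes unless $t\ge \mathrm{dist}(y,\tilde Q^c)$.
\item By the mean value theorem, the weight difference is at most $\frac{\ell}{t+|x_Q-y|}\big(\frac{t}{t+|x_Q-y|}\big)^{n\mu/2}$.
\item Split the region into $|y-x_Q|<2t$ and its complement, and decompose $\tilde Q^c$ dyadically into annuli $2^{k}\tilde Q\setminus 2^{k-1}\tilde Q$. On the $k$-th annulus the weight difference gives a factor of order $2^{-k}$, and for $\mu>3$ the remaining $t$-integrals converge.
\item On each annulus, bound the averages of $|b-\lambda||f|$ over $2^k\tilde Q$ by $(k+1)\|b\|_{Osc_{\exp L^r}}\|f\|_{L(\log L)^{1/r},2^k\tilde Q}$, using $|b_{2^k\tilde Q}-\lambda|\lesssim k\|b\|_{\BMO}$.
\end{enumerate}
Summing, $\sum_k k2^{-k}<\infty$ gives $III\lesssim\|b\|_{Osc_{\exp L^r}}M_{L(\log L)^{1/r}}f(x)$. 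The delicate bookkeeping is the split in $t$ in step 3, where one needs enough decay from the weight and the kernel bound to absorb both the factor $k$ and the factor $t^{-n}$ in the $L^2(dy\,dt/t^{n+1})$ norm. If pointwise kernel bounds prove too weak there, I would instead use the $L^2$ off-diagonal estimates for $\Psi(t\sqrt L)$ on each annulus.
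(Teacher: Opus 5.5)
Your proposal is correct, but it organizes the oscillation term differently from the paper. Both arguments first peel off $(b(z)-\langle b\rangle)\,g^*_{\mu,\Psi}f(z)$ and treat it by H\"older and John--Nirenberg, exactly as in your term $I$.

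\textbf{The paper's route.} It does not split $f$. Instead it splits the $(y,t)$-integration defining $g^*_{\mu,\Psi}((b-\langle b\rangle_{B^*})f)(z)$ into the tent $T(2B)$ and its complement.
\begin{itemize}
\item On the tent, finite propagation speed (Lemma \ref{kernel}) gives $A_1(z)\le g^*_{\mu,\Psi}((b-\langle b\rangle_{B^*})f\chi_{6B})(z)$. This is then handled by weak $(1,1)$ and Kolmogorov, as in your term $II$.
\item Off the tent, $t+|y-z_B|\ge 2r_B$ holds by definition. So the mean value theorem applies directly, and the paper estimates $|A_2(z)^2-A_2(z_B)^2|^{1/2}$.
\end{itemize}

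\textbf{Your route.} The roles are swapped. You split $f=f_1+f_2$ spatially, and finite propagation speed is what guarantees
\[
\Psi(t\sqrt L)((b-\lambda)f_2)(y)=0 \quad\text{unless}\quad t+|y-x_Q|\ge 4\sqrt n\,\ell\ge 8|z-x_Q|.
\]
This is exactly the fact that justifies the mean value bound in your step 2, so you should state it explicitly. Your step 0 inequality should also be written as the reverse triangle inequality in $L^2(dy\,dt/t^{n+1})$, not as an upper bound.

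\textbf{What your version buys.}
\begin{itemize}
\item Term $III$ is the oscillation of a single function, estimated without passing through squares.
\item You keep track of the factor $k$ coming from $|b_{2^k\tilde Q}-\lambda|\lesssim k\|b\|_{\BMO}$. The paper's final display drops this factor, which is harmless only because the $2^{-sk}$ decay absorbs it.
\end{itemize}

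\textbf{Common ingredients.} Both proofs use the support and size bounds of Lemma \ref{kernel}, the hypothesis $\mu>3$ precisely for integrability of $t^{n\mu-3n-1}$ near $t=0$, and the weak $(1,1)$ bound for $g^*_{\mu,\Psi}$.

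\textbf{Your fallback is unnecessary.} Set $R=t+|y-x_Q|$ and bound the inner integral by $\int_{B(x_Q,R)}|b-\lambda||f|$. The pointwise kernel bound then gives a contribution from the dyadic shell $R\sim 2^k\ell$ of order
\[
2^{-2k}(k+1)^2\|b\|_{Osc_{\exp L^r}}^2\,M_{L(\log L)^{1/r}}f(x)^2,
\]
which is summable in $k$. No $L^2$ off-diagonal estimates are needed.
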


\proof 
Let $\mu>3$ and $\eta>1$. To prove \eqref{sharp}, we will show that for each ball $B$ containing $x$ and for some constant $c_B$, there exists a positive constant $C$, such that
\begin{equation}\label{key}
	\Big(\frac{1}{|B|} \int_B\big|g_{\mu, \Psi,b}^*(f)(z)^\delta-|c_B|^\delta\big| d z\Big)^{\frac{1}{\delta}} \leq CM_{L(\log L)^{1/r}} f(x)+CM_{\varepsilon_0}(g_{\mu, \Psi}^*({f}))(x) .
\end{equation}
For $0<\delta<\frac{1}{2}$,
\begin{align*}
	\Big(\frac{1}{|B|} \int_B\big|g_{\mu, \Psi,b}^*(f)(z)^\delta-|c_B|^\delta\big| dz\Big)^{\frac{1}{\delta}} 
	\leq &\Big(\frac{1}{|B|} \int_B\big|g_{\mu, \Psi,b}^*(f)(z)-c_B|^\delta\big| dz\Big)^{\frac{1}{\delta}}\\
	\leq & C_\delta\Big(\frac{1}{|B|} \int_B|b(z)-\langle b\rangle_{B^*}|g_{\mu, \Psi}^*(f)(z)^\delta dz\Big)^{\frac{1}{\delta}}\\
	&+ C_\delta\Big(\frac{1}{|B|} \int_B\big|g_{\mu, \Psi}^*((b-\langle b\rangle_{B^*})f)(z)-c_B\big|^\delta dz\Big)^{\frac{1}{\delta}}\\
	=&:I+II,
\end{align*}
where $B^*=2B$ and $\langle b\rangle_{B^*}:=\frac{1}{|B^*|} \int_{B^*}f(x)dx$.

Consider the first term $I$. Applying H\"older's inequality, it yields
\begin{align*}
	{I} &=C_\delta \Big(\frac{1}{|B|}\int_{B}|b-\langle b\rangle_{B^*}|g_{\mu, \Psi}^*({f})(z)^\delta dz\Big)^\frac{1}{\delta}\\
	&\leq \Big(\frac{1}{|B|}\int_B|b(z)-\langle b\rangle_{B^*}|^{\delta q}dz\Big)^\frac{1}{\delta q}\Big(\frac{1}{|B|}\int_B|g_{\mu, \Psi}^*({f})|^{\delta q'}dz\Big)^\frac{1}{\delta q'}\\
	&\leq C_{\delta ,n,q}\|b\|_{\BMO}M_{\delta q'}(g_{\mu, \Psi}^*({f}))(x)\\
	&\leq C_{\delta ,n,q,r}\|b\|_{Osc_{\exp L^r}}M_{\varepsilon_0}(g_{\mu, \Psi}^*({f}))(x),
\end{align*}
where the last inequality is given by
$\|b\|_{\BMO}\leq \|b\|_{Osc_{\exp L^r}}$ and $\varepsilon_0\geq \delta q'$.

Let $T(B)=\left\{(y, t): y \in B, 0<t<r_B\right\}$, where $r_B$ denotes the radius of $B$. For $(y, t) \in T(B)$, using \eqref{kerelsupp} of Lemma \ref{kernel}, we have
\begin{equation}\label{support}
	\Psi(t \sqrt{L}) f(y)=\Psi(t \sqrt{L})\left(f \chi_{3 B}\right)(y) .
\end{equation}
Now, fix a ball $B$ containing $x$. Denote $\mathbb{R}_{+}^n=\mathbb{R}^n \times(0, \infty)$. For any $z \in B$, we decompose $\left(g_{\mu, \Psi,b}^*(f)(z)\right)^2$ into the sum of	
$$
A_1(z)^2=\iint_{T(2 B)}|\Psi(t \sqrt{L}) ((b-\langle b\rangle_{B^*})f)(y)|^2\left(\frac{t}{t+|z-y|}\right)^{n \mu} \frac{d y d t}{t^{n+1}},
$$
and
$$
A_2(z)^2=\iint_{\mathbb{R}_{+}^n \backslash T(2 B)}|\Psi(t \sqrt{L}) ((b-\langle b\rangle_{B^*})f)(y)|^2\left(\frac{t}{t+|z-y|}\right)^{n \mu} \frac{d y d t}{t^{n+1}} .
$$
Then, since $\left.|| a\right|^s-|b|^s|\leq| a-\left.b\right|^s$ for $0<s<1$, we have
$$
\begin{aligned}
	\left|g_{\mu, \Psi}^*((b-\langle b\rangle_{B^*})f)(z)-c_B\right| & =\left|\left(A_1(z)^2+A_2(z)^2\right)^{1 / 2}-c_B\right| \\
	& \leq\left|A_1(z)^2+A_2(z)^2-c_B^2\right|^{1 / 2} \\
	& \leq A_1(z)+\left|A_2(z)^2-c_B^2\right|^{1 / 2} .
\end{aligned}
$$
Now,  we have
$$
II \leq C_\delta\Big(\frac{1}{|B|} \int_B A_1(z)^{\delta} dz\Big)^{\frac{1}{\delta}}+ C_\delta\Big(\frac{1}{|B|} \int_B\big|A_2(z)^2-c_B^2\big|^{\frac{\delta}{2}} dz\Big)^{\frac{1}{\delta}}.
$$
{Note that $ g_{\mu, \Psi}^*(f)$ maps  $L^1\left(\mathbb{R}^n\right)$ into $L^{1,\infty}\left(\mathbb{R}^n\right)$ due to Theorem 4.19 in \cite{uhl}. }Thus, by Kolmogorov's inequality, it yields that
\begin{equation}\label{II1}
	\begin{aligned}
		\frac{1}{|B|} \int_B A_1(z) dz& =\frac{1}{|B|}\int_0^{\infty}\left|\left\{z \in B: g_{\mu, \Psi}^*\left((b-\langle b\rangle_{B^*})f \chi_{6 B}\right)(z)>t\right\}\right| d t \\
		& \leq \frac{C}{|B|}\int_{6 B}|b(z)-\langle b\rangle_{B^*}||f(z)|dz\\
		& \leq C\|b\|_{Osc_{\exp L^r}} M_{L(\log L)^{1/r}} f(x).
	\end{aligned}
\end{equation}
We take $c_B=A_2\left(z_B\right)^{1 / 2}$, where $z_B$ is the center of $B$. 
By the mean value theorem, we know that for $z \in B$ and $(y, t) \notin T(2 B)$, there exists $0<s \leq 1$, such that
$$
(t+|z-y|)^{-n \mu}-\left(t+\left|z_B-y\right|\right)^{-n \mu} \leq {Cr}_B^s(t+|z-y|)^{-n \mu-s} .
$$
From this and \eqref{support}, using Lemma \ref{kernel}, H\"older's inequality and $\mu>3$, we get
\begin{align*}
	&\big|A_2(z)^2-c_B^2\big|\\
	& \leq C r_B^s \iint_{\mathbb{R}_{+}^n \backslash T(2 B)} t^{n \mu}|\Psi(t \sqrt{L}) ((b-\langle b\rangle_{B^*})f)(y)|^2\left(\frac{1}{(t+|z-y|)}\right)^{n \mu+s} \frac{d y d t}{t^{n+1}} \\
	& \leq C \sum_{k=1}^{\infty} \frac{1}{2^{s k}\left(2^k r_B\right)^{n \mu}} \iint_{T\left(2^{k+1} B\right) \backslash T\left(2^k B\right)}|\Psi(t \sqrt{L}) ((b-\langle b\rangle_{B^*})f)(y)|^2 \frac{d y d t}{t^{n+1-n \mu}} \\
	& \leq C \sum_{k=1}^{\infty} \frac{1}{2^{s k}\left(2^k r_B\right)^{n \mu}}\left(\int_0^{2^{k+1} r_B} \int_{2^{k+1} B} \frac{d y d t}{t^{1+3 n-n \mu}}\right)\left(\int_{6 \cdot 2^k B}|f(y)||b(y)-\langle b\rangle_{B^*}| d y\right)^2 \\
	& \leq C \sum_{k=1}^{\infty} \frac{1}{2^{s k}}\left(\frac{1}{\left|2^{k+1} B\right|} \int_{6 \cdot 2^k B}|b(y)-\langle b\rangle_{B^*}||f(y)| d y\right)^2\\
	& \leq C\sum_{k=1}^{\infty} \frac{1}{2^{s k}} \|b\|_{Osc_{\exp L^r}}^2 M_{L(\log L)^{1/r}} f(x)^2
\end{align*}
for all $z \in B$. Combining this estimate with \eqref{II1} yields
$$
II\leq C \|b\|_{Osc_{\exp L^r}} M_{L(\log L)^{1/r}} f(x),
$$
and then the desired estimate \eqref{key} holds. This concludes the proof of this lemma.
\qed

\medskip
Now we give the proof of Theorem \ref{morreybound}.

\medskip
\textbf{The proof of Theorem \ref{morreybound}}. Let $\mu>3,1<p<\infty$, and $w \in A_p$. Then, there exists $1<\varepsilon<p$ such that $w \in A_{p /\varepsilon}$. Using Lemmas \ref{lemwi} and \ref{sharpbound}, we obtain that for $0 < \delta <\varepsilon < 1$,
$$
\begin{aligned}
	\left\|g_{\mu, \Psi,b}^*(f)\right\|_{L^{p, \kappa}(w)}
	& \leq \|M_\delta\left(g_{\mu, \Psi,b}^*f\right)\|_{L^{p, \kappa}(w)} \\
	& \leq C\big\|M_\delta^{\sharp}\left(g_{\mu, \Psi,b}^*f\right)\big\|_{L^{p, \kappa}(w)} \\
	&\leq C \|b\|_{Osc_{\exp L^r}}\Big(\|M_{L(\log L)^{1/r}} f\|_{L^{p, \kappa}(w)} +\|M_{\varepsilon}(g_{\mu, \Psi}^*{f})\|_{L^{p, \kappa}(w)}\Big).\\
\end{aligned}
$$
Lemma \ref{maximal}, combining with Lemma \ref{th3.2}, gives that
$$
\begin{aligned}
	\|M_{\varepsilon}(g_{\mu, \Psi}^*{f})\|_{L^{p, \kappa}(w)}&=C\left\|M\left(|g_{\mu, \Psi}^*{f}|^\varepsilon\right)\right\|_{L^{p /\varepsilon, \kappa}(w)}^{1 /\varepsilon} \\
	& \leq C\left\||g_{\mu, \Psi}^*{f}|^\varepsilon\right\|_{L^{p / \varepsilon, \kappa}(w)}^{1 / \varepsilon}=C\|g_{\mu, \Psi}^*{f}\|_{L^{p, \kappa}(w)}\leq C\|{f}\|_{L^{p, \kappa}(w)}.
\end{aligned}
$$
On the other hand,  Since $r\geq 1$, it follows that $M_{L(\log L)^{1/r}}$ is pointwise 
smaller than $M_{L(\log L)}$ which is known to be equivalent to $M^2$. Therefore
$$
\|M_{L(\log L)^{1/r}} f\|_{L^{p, \kappa}(w)}\leq C\|f\|_{L^{p, \kappa}(w)}.
$$
The proof of Theorem \ref{morreybound} is completed.
\qed

\medskip
By mimicking the proof of Theorem 1.5 and 1.6 in \cite{[PT]}, using Lemma \ref{lemwi} and Lemma \ref{sharpbound}, it's easy to obtain the following weighted endpoint estimates of $g_{\mu, \Psi,b}^*$. We omit the proof.
\begin{theorem}\label{wgbF-S}
	Let $\Phi(t)=t(1+\log^+t)^r,\ \omega\in A_\infty$
	and	$b\in Osc_{\exp L^r}, r\geq 1$.Then there exists a constant $C$ that depends on the doubling condition of $\Phi$ and the $A_\infty$ condition of $\omega$ such that for any bounded function ${f}$ with a compact supported set, we have
	\begin{equation}\label{w1}
		\begin{aligned}
			&\sup_{\la>0}\frac{1}{\Phi(\frac{1}{\la})}\omega(\{y\in {\rn}:|g_{\mu, \Psi,b}^*({f})(y)|>\la\})\\
			&\leq C\sup_{\la>0}\frac{1}{\Phi(\frac{1}{\la})}\omega(\{y\in\rn:\mathcal{M}_\Phi(\|b\|_{Osc_{\exp L^r}}{f})(y)>\la\}).
		\end{aligned}
	\end{equation}
	Also, \eqref{w1} holds for $S_{P,b}$ and $S_{H,b}$.	
\end{theorem}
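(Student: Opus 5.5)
The plan follows the Pérez–Pradolini scheme of \cite{[PT]}, with the pointwise sharp-function bound of Lemma \ref{sharpbound} playing the role of the Calderón–Zygmund sharp estimate. Fix $0<\delta<\varepsilon<1$. First normalize $\|b\|_{Osc_{\exp L^r}}=1$ by homogeneity; this is legitimate because $g^*_{\mu,\Psi,b}$ is linear in $b$ and $\mathcal M_\Phi$ is applied to $\|b\|f$. Next, note that $|g^*_{\mu,\Psi,b}f|\le M_\delta(g^*_{\mu,\Psi,b}f)$ a.e. The function $\varphi(\lambda)=1/\Phi(1/\lambda)$ is doubling, since $\Phi$ is. Part (i) of Lemma \ref{lemwi} then reduces the left-hand side of \eqref{w1} to
\[
\sup_{\lambda>0}\varphi(\lambda)\,\omega(\{M^\sharp_\delta(g^*_{\mu,\Psi,b}f)>\lambda\}).
\]
To apply Lemma \ref{lemwi}, the left side has to be finite. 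I plan to get this first for bounded, compactly supported $f$ and bounded $b$, using the $L^p$ bound of Theorem \ref{lebesgue bound}, and then pass to general $b$ by truncation.

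Lemma \ref{sharpbound} gives $M_\delta^\sharp(g^*_{\mu,\Psi,b}f)\le C\big(M_\varepsilon(g^*_{\mu,\Psi}f)+M_{L(\log L)^{1/r}}f\big)$. The proof of that lemma actually only used $\|b\|_{Osc_{\exp L^r}}$, so I will reuse it in the stronger form with $M_\Phi$, $\Phi(t)=t(1+\log^+t)^r$, on the right-hand side. I will check this by Hölder's inequality \eqref{Holder} with the pair $\exp L^r$ / $L(\log L)^{1/r}$; since $M_{L(\log L)^{1/r}}\le CM_{\Phi}$ for $r\ge1$, either version suffices. The $\varphi$-weighted level set of the sum splits into two pieces. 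The $M_\Phi f$ piece is already of the required form.

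The main obstacle is the term $M_\varepsilon(g^*_{\mu,\Psi}f)$. Here I apply Lemma \ref{lemwi}(i) again, now with $M_\varepsilon$ and $M_\varepsilon^\sharp$, and need a sharp-function estimate for $g^*_{\mu,\Psi}$ itself, namely $M_\varepsilon^\sharp(g^*_{\mu,\Psi}f)\le CMf$. I will obtain this from the computation in Lemma \ref{sharpbound} with $b\equiv$ const, where terms $A_1$ and $A_2$ are controlled by $Mf$, using Kolmogorov's inequality, the weak $(1,1)$ bound for $g^*_{\mu,\Psi}$, and the support property \eqref{support}. Since $Mf\le M_\Phi f$, this yields
\[
\sup_\lambda\varphi(\lambda)\,\omega(\{M_\varepsilon(g^*_{\mu,\Psi}f)>\lambda\})\le C\sup_\lambda\varphi(\lambda)\,\omega(\{M_\Phi f>\lambda\}).
\]
This second application of Lemma \ref{lemwi} also needs its left side finite. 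I expect that to follow from $g^*_{\mu,\Psi}f\in L^{1,\infty}\cap L^2$ for bounded compactly supported $f$, combined with the fact that $\varphi(\lambda)\approx\lambda$ for large $\lambda$, but this finiteness check is where I anticipate the most care is needed.

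Combining the pieces, and using the doubling of $\varphi$ to absorb the constants inside the level sets, gives \eqref{w1} for $g^*_{\mu,\Psi,b}$. The statement for $S_{P,b}$ and $S_{H,b}$ then follows from the pointwise domination \eqref{eq3.2}. Applied with $f$ replaced by $(b(x)-b(\cdot))f$ for fixed $x$, it gives $S_{P,b}f(x)\le Cg^*_{\mu,\Psi,b}f(x)$, and similarly for $S_{H,b}$, so the level sets of the commutators are contained in those of $g^*_{\mu,\Psi,b}f$ at level $\lambda/C$. Doubling of $\varphi$ then finishes the argument.
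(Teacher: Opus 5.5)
Your outline is the argument the paper intends; the paper omits the proof and simply refers to \cite{[PT]} together with Lemmas \ref{lemwi} and \ref{sharpbound}. You also rightly add the ingredient that scheme needs for the term $M_\varepsilon(g^*_{\mu,\Psi}f)$, namely $M^\sharp_\varepsilon(g^*_{\mu,\Psi}f)\le CMf$, obtained by rerunning the $A_1$/$A_2$ estimates with $f$ in place of $(b-\langle b\rangle_{B^*})f$.

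The step that fails as proposed is the a priori finiteness demanded by Lemma \ref{lemwi}(i). Since $\va(\la)=1/\Phi(1/\la)\le\la$, an $L^p(\omega)$ bound with $p>1$ only yields $\va(\la)\,\omega(\{F>\la\})\lesssim\la^{1-p}$. This is unbounded as $\la\to0$. Unweighted facts such as $F\in L^{1,\infty}\cap L^2$ say nothing about the $\omega$-measure of large sets when $\omega$ grows at infinity. In fact finiteness can fail outright. If $F=g^*_{\mu,\Psi,b}f\not\equiv0$, then $M_\delta F(x)\gtrsim|x|^{-n/\delta}$ for large $|x|$. So for $\omega(x)=|x|^a\in A_\infty$ with $a>n(1/\delta-1)$ one has $\sup_\la\va(\la)\,\omega(\{M_\delta F>\la\})=\infty$. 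The inequality is then trivially true, since its right-hand side is infinite too. But the argument cannot be run by verifying finiteness for the given $\omega$.

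The standard repair is to truncate the weight. For $\omega\in A_p$ and $\omega_N:=\min(\omega,N)$ one checks $\langle\omega_N\rangle_Q\langle\omega_N^{-1/(p-1)}\rangle_Q^{p-1}\le 2^{p-1}([\omega]_{A_p}+1)$. Hence the constants of Lemma \ref{lemwi} are uniform in $N$.

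Now take bounded $b$ and bounded compactly supported $f$. Then $g^*_{\mu,\Psi,b}f\le\|b\|_{\infty}\,g^*_{\mu,\Psi}f+g^*_{\mu,\Psi}(bf)\in L^{1,\infty}(\rn)$ by the weak $(1,1)$ bound of $g^*_{\mu,\Psi}$. Since $\delta<1$, also $M_\delta(g^*_{\mu,\Psi,b}f)\in L^{1,\infty}(\rn)$. Because $\omega_N\le N$ and $\va(\la)\le\la$, this gives $\sup_\la\va(\la)\,\omega_N(\{M_\delta(g^*_{\mu,\Psi,b}f)>\la\})\le N\|M_\delta(g^*_{\mu,\Psi,b}f)\|_{L^{1,\infty}}<\infty$. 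The same holds for $M_\varepsilon(g^*_{\mu,\Psi}f)$.

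Run your argument with $\omega_N$ and dominate the right-hand side for $\omega_N$ by the one for $\omega$. Then let $N\to\infty$ by monotone convergence.

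Finally, remove the boundedness of $b$ via $b_k:=\max(-k,\min(b,k))$, whose $Osc_{\exp L^r}$ norm is at most a constant times that of $b$. From $|g^*_{\mu,\Psi,b_k}f-g^*_{\mu,\Psi,b}f|\le|b_k-b|\,g^*_{\mu,\Psi}f+g^*_{\mu,\Psi}((b_k-b)f)$ and the $L^2$-boundedness of $g^*_{\mu,\Psi}$, a subsequence converges a.e. Fatou's lemma on the level sets then passes the inequality to the limit. With this modification the rest of your proposal goes through, including the transfer to $S_{P,b}$ and $S_{H,b}$ via \eqref{eq3.2}.
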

In particular, if $w\in A_1$, we have the following weighted weak estimate. 
\begin{corollary}\label{wgbend}
	Let $\Phi(t)=t(1+\log^+t)^r,\ \omega\in A_{{1}}$ and
	$b\in Osc_{\exp L^r},\ r\geq 1$. Then there exists a constant $C$ that depends on the doubling condition of $\Phi$ and the $A_{{1}}$ condition of ${\omega}$ such that for any bounded function ${f}$ with a compact supported set, we have
	\begin{equation}\label{w2}
		\begin{aligned}
			{{\omega}}(\{y\in {\rn}:|g_{\mu, \Psi,b}^*({f})(y)|>\la\})
			\leq C\int_{\rn}\Phi\Big(\frac{\|b\|_{Osc_{\exp L^r}}|f(x)|}{\la}\Big)\omega(x)dx.	
		\end{aligned}
	\end{equation}
	Also, \eqref{w2} holds for $S_{P,b}$ and $S_{H,b}$.	
\end{corollary}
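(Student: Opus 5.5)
The plan is to derive the corollary from Theorem \ref{wgbF-S} by bounding its right-hand side in the usual way for $A_1$ weights. Throughout, write $\Phi(t)=t(1+\log^+t)^r$.

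\textbf{Step 1 (reduction to the left side of \eqref{w2}).} By homogeneity, replace $f$ by $f/\lambda$; this reduces matters to a single level, $\lambda=1$. Apply Theorem \ref{wgbF-S} with $\omega\in A_1\subset A_\infty$ at the fixed level. The left-hand supremum dominates its value at $\lambda=1$, and $\Phi(1)=1$. So it suffices to show
\[
\sup_{t>0}\frac{1}{\Phi(1/t)}\,\omega\big(\{\mathcal M_\Phi g>t\}\big)\le C\int_{\rn}\Phi(|g|)\,\omega\,dx,
\qquad g=\|b\|_{Osc_{\exp L^r}}f .
\]

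\textbf{Step 2 (weighted weak estimate for $\mathcal M_\Phi$).} This is the standard Pérez estimate for $A_1$ weights,
\[
\omega(\{\mathcal M_\Phi g>t\})\le C\int\Phi(|g|/t)\,\omega .
\]
To prove it, use $\mathcal M_\Phi\approx M^{[r]+1}$ for integer $r$, or run a direct Calderón–Zygmund/Vitali covering argument.
\begin{itemize}
\item Choose disjoint cubes $Q_j$ with $\|g\|_{\Phi,Q_j}>t$. This gives $|Q_j|\le\int_{Q_j}\Phi(|g|/t)$.
\item Cover the level set by the dilates $3Q_j$.
\item Use the $A_1$ condition, $\omega(3Q_j)/|3Q_j|\lesssim\inf_{Q_j}\omega$, to conclude
\[
\omega(\cup 3Q_j)\lesssim\sum_j\inf_{Q_j}\omega\int_{Q_j}\Phi(|g|/t)\le\int\Phi(|g|/t)\,\omega .
\]
\end{itemize}

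\textbf{Step 3 (removing the factor $\Phi(1/t)$).} Use submultiplicativity, $\Phi(ab)\le C\,\Phi(a)\Phi(b)$, to get $\Phi(|g|/t)\le C\,\Phi(1/t)\,\Phi(|g|)$. Then
\[
\frac{1}{\Phi(1/t)}\,\omega(\{\mathcal M_\Phi g>t\})\le C\int\Phi(|g|)\,\omega ,
\]
uniformly in $t$. This gives \eqref{w2} after undoing the scaling.

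\textbf{Extension to the commutators.} For $S_{P,b}$ and $S_{H,b}$, invoke the pointwise domination by $g^*_{\mu,\Psi,b}$. This comes from \eqref{eq3.2} applied inside the commutator, exactly as the paper does in the reduction before Theorem \ref{lebesgue bound}.

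\textbf{Main obstacle.} The delicate point is Step 2, specifically making the covering argument rigorous for the Orlicz maximal function. The property $|Q_j|\le\int_{Q_j}\Phi(|g|/t)$ follows from the definition of the Luxemburg average, and taking $g$ bounded with compact support keeps the cubes bounded. Step 3's submultiplicativity holds with a constant depending only on $r$. The rest is routine.
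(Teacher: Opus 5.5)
Your proposal is correct and matches the route the paper intends. The paper states the corollary as a direct consequence of Theorem \ref{wgbF-S} in the manner of \cite{[PT]}, and your steps are that standard derivation: rescale to level one using $\Phi(1)=1$, apply the P\'erez $A_1$ weak-type bound $\omega(\{\mathcal M_\Phi g>t\})\le C\int_{\rn}\Phi(|g|/t)\,\omega\,dx$, and absorb $\Phi(1/t)$ by submultiplicativity (here $\Phi(ab)\le\Phi(a)\Phi(b)$ with constant $1$, because $1+\log^+(ab)\le(1+\log^+a)(1+\log^+b)$), with the passage to $S_{P,b}$ and $S_{H,b}$ obtained by applying \eqref{eq3.2} to $(b(x)-b(\cdot))f$ at the point $x$, as in the paper's reduction.
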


Finally, we give the proof of Theorem \ref{compact thm}. To do this, we shall give the Frechet-Kolmogorov
theorem on weighted Morrey spaces, which was given by Liu and Cui in \cite{LC} and plays a key role in the proof of Theorem \ref{compact thm}.

\medskip
\begin{lemma}[\cite{LC}]\label{compactcondition}
	Let $1 \leqslant p<\infty, 0<\lambda<n$. If the subset $G$ in $L^{p, \lambda}\left(w\right)$ satisfies the following conditions:
	
	\begin{itemize}
		\item[\rm(a)]  $\mathop{\sup}\limits _{f \in G}\|f\|_{L^{p, \lambda}(w)}<\infty$;
		\item[\rm(b)] 	$\mathop{\lim}\limits _{|y| \rightarrow 0}\|f(\cdot+y)-f(\cdot)\|_{L^{p, \lambda}(w)}=0, \quad \text { uniformly in } f \in G$;
		\item[\rm(c)] $\mathop{\lim}\limits _{\beta \rightarrow \infty}\left\|f \chi_{E_\beta}\right\|_{L^{p, \lambda}(w)}=0, \quad \text { uniformly in } f \in G$,
	\end{itemize}
	where $E_\beta=\left\{x \in \mathbb{R}^n:|x|>\beta\right\}$, then $G$ is a strongly pre-compact set in $L^{p, \lambda}(w)$.
\end{lemma}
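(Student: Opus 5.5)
We prove that $G$ is totally bounded in $L^{p,\lambda}(w)$; since the space is complete, this gives precompactness. Throughout we write $\kappa=\lambda/n\in(0,1)$. We use that $w\in A_p$, as in the setting of Theorem \ref{compact thm}; this hypothesis is not stated in Lemma \ref{compactcondition} but is needed below. It gives $\sigma:=w^{-1/(p-1)}\in L^1_{\rm loc}$ when $p>1$, and $w^{-1}\in L^\infty_{\rm loc}$ when $p=1$. The plan is the classical Fr\'echet--Kolmogorov scheme: smoothing by ball averages, truncation to a large ball, and Arzel\`a--Ascoli.

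\emph{Step 1 (smoothing).} For $r>0$ set $S_rf(x)=\frac{1}{|B(0,r)|}\int_{B(0,r)}f(x+y)\,dy$. By Minkowski's integral inequality, applied on each ball $B$ and then taking the supremum over $B$,
$$\|S_rf-f\|_{L^{p,\lambda}(w)}\le \sup_{|y|\le r}\|f(\cdot+y)-f\|_{L^{p,\lambda}(w)}.$$
By condition (b) this is $<\epsilon$ for all $f\in G$ once $r$ is small. By condition (c), choose $\beta$ so large that $\|f\chi_{E_\beta}\|_{L^{p,\lambda}(w)}<\epsilon$ for all $f\in G$. Then
$$\|f-(S_rf)\chi_{B(0,\beta)}\|\le \|f-S_rf\|+\|(S_rf-f)\chi_{E_\beta}\|+\|f\chi_{E_\beta}\|<3\epsilon.$$
It therefore suffices to find a finite $\epsilon$-net for the family $\mathcal F=\{(S_rf)\chi_{B(0,\beta)}: f\in G\}$.

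\emph{Step 2 (Arzel\`a--Ascoli on $\overline{B(0,\beta)}$).} For $f\in G$ and $x\in \overline{B(0,\beta)}$, H\"older's inequality gives
$$|S_rf(x)|\le |B_r|^{-1}\Big(\int_{B(x,r)}|f|^pw\Big)^{1/p}\sigma(B(x,r))^{1/p'}\le |B_r|^{-1}\,\sup_{f\in G}\|f\|_{L^{p,\lambda}(w)}\;w(B(0,\beta+r))^{\kappa/p}\,\sigma(B(0,\beta+r))^{1/p'}.$$
This is finite by condition (a), so $\mathcal F$ is uniformly bounded. Similarly,
$$|S_rf(x)-S_rf(x')|\le |B_r|^{-1}\,C_G\,\sigma\big(B(x,r)\triangle B(x',r)\big)^{1/p'}.$$
The right side tends to $0$ uniformly in $f$ and in $x,x'\in\overline{B(0,\beta)}$ as $|x-x'|\to0$. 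This follows from the absolute continuity of the integral of $\sigma\in L^1(B(0,\beta+r))$, because $|B(x,r)\triangle B(x',r)|\to 0$. For $p=1$, replace $\sigma(\cdot)^{1/p'}$ by $\|w^{-1}\|_{L^\infty(B(0,\beta+r))}$ and use Lebesgue measure instead. Hence $\mathcal F$ is equicontinuous, and Arzel\`a--Ascoli yields finitely many $g_1,\dots,g_N$ forming an $\eta$-net of $\mathcal F$ in the sup norm on $\overline{B(0,\beta)}$.

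\emph{Step 3 (sup norm controls the Morrey norm on a bounded set).} For $h$ supported in $B(0,\beta)$ and any ball $B$,
$$\frac{1}{w(B)^\kappa}\int_B|h|^pw\le \|h\|_\infty^p\frac{w(B\cap B(0,\beta))}{w(B)^\kappa}\le\|h\|_\infty^p\,w(B(0,\beta))^{1-\kappa}.$$
So the net from Step 2 is a $\eta\,w(B(0,\beta))^{(1-\kappa)/p}$-net of $\mathcal F$ in $L^{p,\lambda}(w)$. Choosing $\eta$ so that this quantity is at most $\epsilon$ and combining with Step 1 gives a finite $4\epsilon$-net for $G$, which finishes the proof.

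The main obstacle is Step 2: the pointwise and equicontinuity estimates for $S_rf$. These need local integrability of $w^{-1/(p-1)}$ (or local boundedness of $w^{-1}$ when $p=1$), which is exactly where the $A_p$ hypothesis enters. The remaining steps are routine consequences of Minkowski's inequality and the definition of the Morrey norm.
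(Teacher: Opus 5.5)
The paper gives no proof of this lemma; it is quoted from \cite{LC}. Your scheme is the standard Fr\'echet--Kolmogorov argument for Morrey-type spaces: ball averages $S_r$, truncation via (c), Arzel\`a--Ascoli on $\overline{B(0,\beta)}$, and the bound $\|h\|_{L^{p,\lambda}(w)}\le\|h\|_\infty\, w(B(0,\beta))^{(1-\kappa)/p}$ for $h$ supported in $B(0,\beta)$. Adding $w\in A_p$ is legitimate, since the lemma is only applied to such weights. For $1<p<\infty$, the only range used in Theorem \ref{compact thm}, your argument is complete. There, equicontinuity follows from (a) alone, because $\sigma(B(x,r)\triangle B(x',r))^{1/p'}\to 0$.

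The step that fails is your parenthetical treatment of $p=1$. With $p'=\infty$, H\"older only gives $\int_E|f|\le\|w^{-1}\|_{L^\infty(E)}\int_E|f|w$ for $E=B(x,r)\triangle B(x',r)$. Nothing on the right becomes small as $|E|\to0$, and "using Lebesgue measure instead" produces no factor of $|E|$. In fact a bounded subset of $L^{1,\lambda}(w)$ need not be uniformly integrable over thin sets like $E$. Take $w\equiv1$, $n\ge2$, $\lambda=n-1$ and $f_k=k\chi_{\{1-1/k<|x|<1\}}$. These are bounded in $L^{1,n-1}$, yet for each small $\delta>0$ the difference $S_1f_k(0)-S_1f_k(\delta e_1)$ stays bounded away from $0$ as $k\to\infty$ (its limit exceeds $n/2$). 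So $\{S_1f_k\}$ is not equicontinuous, and (a) cannot give equicontinuity when $p=1$.

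The repair is to use hypothesis (b), which you only invoked in Step 1. For $x,x+h\in\overline{B(0,\beta)}$,
\[
|S_rf(x+h)-S_rf(x)|\le\frac{1}{|B_r|}\int_{B(x,r)}|f(z+h)-f(z)|\,dz\le\frac{\|w^{-1}\|_{L^\infty(B(0,\beta+r))}\,w(B(0,\beta+r))^{\kappa}}{|B_r|}\,\|f(\cdot+h)-f\|_{L^{1,\lambda}(w)},
\]
which tends to $0$ as $|h|\to0$, uniformly in $f\in G$. The same estimate works for every $p$ if you replace $\|w^{-1}\|_\infty$ by the H\"older factor $\sigma(B(0,\beta+r))^{1/p'}$ and use the power $\kappa/p$, so you could use it throughout. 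With this change, the rest of your proof goes through as written.
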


\medskip
\noindent\textbf{Proof of Theorem \ref{compact thm}.}
We first prove the result for $S_{H,b}$.  According to the assumption \eqref{smooth}, the kernel of $t^2 L e^{-t^2 L}$, $q_t$, satisfies that for any $\alpha \in(0, \nu)\  (0<\nu\leq 1)$, there exists a positive constant $C_1$ such that for all $t \in(0, \infty)$ and almost every $x, y, h \in \mathbb{R}^n$ with $2|h| \leq t+|x-y|$,
\begin{equation}\label{smooth2}
	\begin{aligned}
		& \left|q_{t}(x+h, y)-q_{t}(x, y)\right|+\left|q_{t}(x, y+h)-q_{t}(x, y)\right| \\
		& \quad \leq \frac{C}{t^n}\left(\frac{|h|}{t+|x-y|}\right)^\alpha \exp \left\{-\frac{c|x-y|^{2}}{t^{2}}\right\},
	\end{aligned}
\end{equation}
\begin{equation}\label{size2}
	\left|q_t(x, y)\right|+\left|q_t(y, x)\right| \leq \frac{C_1}{t^{n}} \exp \Big\{-\frac{c_1|x-y|^{2}}{t^2}\Big\}.
\end{equation}
See \cite{BCD} for details. 

Suppose that $F$ is an arbitrary bounded set in $L^{p, \lambda}\left(w\right)$, that is, there exists a constant $D>0$ such that $\|f\|_{L^{p, \lambda}(w)} \leqslant D$ for every $f \in F$. By Corollary \ref{morreybound-bmo}, it holds that
\begin{equation}\label{mb}
	\left\|[b, S_H] f\right\|_{L^{p, \lambda}(w)} \leqslant C\|b\|_{\BMO}\|f\|_{L^{p, \lambda}(w)}.
\end{equation}
Thus $[b, S_H]$ is continuous in $L^{p, \lambda}\left(w\right)$. Let $G=\left\{[b, S_H] f: f \in F\right\}$ if $b \in C_0^{\infty}\left(\mathbb{R}^n\right)$ and $\widetilde{G}=\left\{[b, S_H] f: f \in F\right\}$ if $b \in \operatorname{CMO}\left(\mathbb{R}^n\right)$. So, by the definition of compactness, it suffices to prove that for any bounded set $F$ in $L^{p, \lambda}\left(w\right), \widetilde{G}$ is a strongly pre-compact set in $L^{p, \lambda}\left(w\right)$. We first show that if (a)-(c) of Lemma \ref{compactcondition} hold uniformly in $G$, then (a)-(c) also hold  uniformly in $\widetilde{G}$ and thus $[b,  S_H]$ is a compact operator in $L^{p, \lambda}\left(w\right)$.

Actually, assume that $b \in \operatorname{CMO}\left(\mathbb{R}^n\right)$, then for any $\varepsilon>0$ there exists $b^{\varepsilon} \in C_0^{\infty}\left(\mathbb{R}^n\right)$ such that $\left\|b-b^{\varepsilon}\right\|_{\BMO}<\varepsilon$. By the inequality
$$
\begin{aligned}
	& \left|\left[b, S_H\right] f(x)-\left[b^{\varepsilon},  S_H\right] f(x)\right| \\
	&\leqslant \left(\int_0^{\infty} \int_{|x-y|<t}\left|\int_{\rn} q_t(y,z)\left(\left(b-b^{\varepsilon}\right)(x)-\left(b-b^{\varepsilon}\right)(z)\right) f(z) d z\right|^2 \frac{d y d t}{t^{n+1}}\right)^{\frac{1}{2}}
\end{aligned}
$$
and \eqref{mb}, we get
$$
\left\|\left[b,  S_H\right]-\left[b^{\varepsilon},  S_H\right]\right\|_{L^{p, \lambda} (w)\rightarrow L^{p, \lambda}(w)} \leqslant\left\|\left[b-b^{\varepsilon},  S_H\right]\right\|_{L^{p, \lambda}(w) \rightarrow L^{p, \lambda}(w)} \leqslant C \varepsilon.
$$
Thus, by Minkowski's inequality, for any fixed $f \in F$,
$$
\sup _{f \in F}\left\|\left[b, S_H\right] f\right\|_{L^{p, \lambda}(w)} \leqslant \sup _{f \in F}\left\|\left[b^{\varepsilon},  S_H\right] f\right\|_{L^{p, \lambda}(w)}+C D \varepsilon<\infty.
$$
Similarly, by Minkowski's inequality, for any fixed $f \in F$
$$
\begin{aligned}
	\lim _{\beta \rightarrow \infty}\left\|\left[b,  S_H\right] f \chi_{E_\beta}\right\|_{L^{p, \lambda}(w)}& \leqslant \lim _{\beta \rightarrow \infty}\left\|\left[b^{\varepsilon},  S_H\right] f \chi_{E_\beta}\right\|_{L^{p, \lambda}(w)}+\lim _{\beta \rightarrow \infty}\left\|\left[b-b^{\varepsilon}, S_H\right] f \chi_{E_\beta}\right\|_{L^{p, \lambda}(w)} \\
	& \leqslant C D \varepsilon.
\end{aligned}
$$
Thus (a)-(c) hold uniformly for $\tilde{G}$. Therefore, by Lemma \ref{compact thm}, we know $\tilde{G}$ is a strongly pre-compact set in $L^{p, \lambda}\left(w\right)$ and then $\left[b,  S_H\right]$ is a compact operator in $L^{p, \lambda}\left(w\right)$.
So, it suffices to prove that (a)-(c) of Lemma \ref{compactcondition} hold uniformly in $G$. Without loss of the generality, we can assume $\|b\|_{\BMO}=1$. By \eqref{mb}, we have
$$
\sup _{f \in F}\left\|\left[b,  S_H\right] f\right\|_{L^{p, \lambda}(w)} \leqslant C\|b\|_{\BMO}\|f\|_{L^{p, \lambda}(w)} \leqslant C D<\infty,
$$
which means (a) holds for the commutator $[b,  S_H]$ in $G$ uniformly. Now we discuss (c).
\iffalse
For any $\varepsilon>0, q>1$ there exists $A>0$ such that
$$
\left(\int_A^{\infty} \frac{d r}{r^{n q-n+1}}\right)^{\frac{1}{q}}<\varepsilon.
$$ \fi
Suppose that supp $b \subset\{z:|z|<R\}$ for some $R>0$ and $\beta>2 R$. Then for any $x$ satisfying $|x|>\beta$ and $z\in \supp b$,
we have $b(x)=0$.
Thus for $|x|>\beta$, by Minkowski's inequality,  we have
\begin{align*}
	\left|\left[b,  S_H\right] f(x)\right| 
	& = \left(\int_0^{\infty} \int_{|x-y|<t}\Big|\int_{|z|<R} q_t(y,z)(b(x)-b(z)) f(z) d z\Big|^2 \frac{d y d t}{t^{n+1}}\right)^{1 / 2} \\
	&= \left(\int_0^{\infty} \int_{|x-y|<t}\Big|\int_{|z|<R}q_t(y,z)b(z) f(z) d z\Big|^2 \frac{d y d t}{t^{n+1}}\right)^{1 / 2}\\
	&\leqslant C \int_{|z|<R}|b(z)||f(z)|\left(\int_0^{|x-z| / 2}\int_{|x-y|<t}  |q_t(y,z)|^2
	\frac{d y d t}{t^{n+1}}\right)^{1 / 2} d z \\
	&\quad+C \int_{|z|<R}|b(z)||f(z)|\left(\int_{|x-z| / 2}^{\infty} \int_{|x-y|<t}  |q_t(y,z)|^2\frac{d y d t}{t^{n+1}}\right)^{1 / 2} d z\\
	&=:J_1+J_2.
\end{align*}
For $J_1$, we have $|y-z|\geq|x-z|-|x-y|\geq|x-z|/2$ due to $|x-y|<|x-z|/2$. Then using \eqref{size2} and the estimate $e^{-s}\leq\frac{C}{s^{M/2}}$ with $M>2n$, it yields that

\begin{equation}\label{j1}
	\begin{aligned}
		J_1 &\leq C \int_{|z|<R}|b(z)||f(z)|\left(\int_0^{|x-z| / 2}\int_{|x-y|<t} \exp\Big\{-\frac{c|y-z|^2}{t^2}\Big\}
		\frac{d y d t}{t^{3n+1}}\right)^{1 / 2} d z \\
		&\leq C \int_{|z|<R}|b(z)||f(z)|\left(\int_0^{|x-z| / 2}\int_{|x-y|<t} \frac{t^M}{|x-z|^M}
		\frac{d y d t}{t^{3n+1}}\right)^{1 / 2} d z \\
		&\leqslant  C\|b\|_\infty \int_{|z|<R}|x-z|^{-n}|f(z)| d z .
	\end{aligned}
\end{equation}
For $J_2$, 
\begin{equation}\label{j2}
	\begin{aligned}
		J_2 &\leq C \int_{|z|<R}|b(z)||f(z)|\left(\int_{|x-z| / 2}^{\infty} \int_{|x-y|<t}  
		\frac{d y d t}{t^{3n+1}}\right)^{1 / 2} d z \\
		&\leqslant  C\|b\|_\infty \int_{|z|<R}|x-z|^{-n}|f(z)| d z. \\
	\end{aligned}
\end{equation}
According to \eqref{j1} and \eqref{j2}, since $|x|\leq 2|x-z|$ for $|x|>\beta>2R$ and $z\in \supp b$, we can get
\begin{equation}
	\begin{aligned}
		\left|\left[b,  S_H\right] f(x)\right|	&\leqslant  C\|b\|_\infty \int_{|z|<R}|x-z|^{-n}|f(z)| d z \\
		&	\leqslant  C\|b\|_\infty\int_{|z|<R}|x|^{-n}|f(z)| dz	.
	\end{aligned}
\end{equation}
Denote $B_R=\{z\in\rn:|z|<R\}$, one can obtain via H\"older's inequality that
\begin{equation}
	\begin{aligned}
		\int_{B_R}|f(z)| dz
		&\leq \Big(\int_{B_R}|f(y)|^pw(z)dz\Big)^{\frac{1}{p}}\Big(\int_{B_R}w(z)^{-\frac{p'}{p}}dz\Big)^{\frac{1}{p'}}\\
		&\leq [w]_{A_p}^{\frac{1}{p}}\|f\|_{L^{p,\lambda}(w)}|B_R|w(B_R)^{(\frac{\lambda}{n}-1)\frac{1}{p}}=:\mathcal{C}(p,R,\lambda).
	\end{aligned}
\end{equation}	
By the property of $A_p$ weights in Lemma \ref{weights}, we can choose a $1<q<p<\infty$ so that $w\in A_q.$ Hence, we have for any $\tilde{B}\subset\rn$,

\begin{align*}
	& \frac{1}{w(\tilde{B})^{{\lambda}/{n}}} \int_{\tilde{B}}\left|\left[b,  S_H\right] f(x)\right|^pw(x) \chi_{\{y:|y|>\beta\}}(x) d x \\	
	& \leq C\Big(\int_{|z|<R}|f(z)| dz\Big)^{p}\frac{1}{w(\tilde{B})^{{\lambda}/{n}}} \int_{\tilde{B}} \frac{w(x)}{|x|^{np}}\chi_{\{y:|y|>\beta\}}(x) d x \\	
	& \leq C\mathcal{C}(p,R,\lambda)^p	\frac{1}{w(\tilde{B})^{{\lambda}/{n}}} \int_{\tilde{B}\cap\{y:|y|>\beta\}} \frac{w(x)}{|x|^{np}} d x \\	
	& \leq C\mathcal{C}(p,R,\lambda)^p	\frac{1}{w(\tilde{B})^{{\lambda}/{n}}} \sum_{j=0}^\infty\int_{\tilde{B}\cap(B(0,2^{j+1}\beta)
		\backslash B(0,2^{j}\beta))} \frac{w(x)}{|x|^{np}} d x \\	
	& \leq C\mathcal{C}(p,R,\lambda)^p \sum_{j=0}^\infty(2^j\beta)^{-np} w(\tilde{B}\cap(B(0,2^{j+1}\beta)
	\backslash B(0,2^{j}\beta)))^{1-{\lambda}/{n}}\\
	& \leq C\mathcal{C}(p,R,\lambda)^p \sum_{j=0}^\infty(2^j\beta)^{-np} w(B(0,2^{j+1}\beta)
	)^{1-{\lambda}/{n}}\\	
	& \leq C[w]_{A_q}\mathcal{C}(p,R,\lambda)^p \sum_{j=0}^\infty(2^j\beta)^{-np}(2^{j+1}\beta)^{q(n-\lambda)} w(B(0,1)
	)^{1-{\lambda}/{n}}\\
	& \leq C[w]_{A_q}\mathcal{C}(p,R,\lambda)^p \beta^{n(q-p)-\lambda q}.
\end{align*}
Therefore for any $f\in F$, by using Lemma \ref{weights}, it leads to
$$
\begin{aligned}
	& \Big(\frac{1}{w(\tilde{B})^{{\lambda}/{n}}} \int_{\tilde{B}}\left|\left[b,  S_H\right] f(x)\right|^pw(x) \chi_{\{y:|y|>\beta\}}(x) d x \Big)^{1/p}\\
	& \leq C|B_R|w(B_R)^{(\frac{\lambda}{n}-1)\frac{1}{p}} \beta^{n(q-p)-\lambda q}\|f\|_{L^{p,\lambda}(w)}\\
	& \leq C[w]_{A_p}R^{{\lambda}-n} R^n\beta^{n(q-p)-\lambda q}\|f\|_{L^{p,\lambda}(w)}\\
	& \leq C \beta^{n(q-p)-\lambda q+\lambda}\|f\|_{L^{p,\lambda}(w)}\rightarrow 0, \ \hbox{as\ }\beta\rightarrow \infty,
\end{aligned}
$$
which shows that (c) of Lemma \ref{compactcondition} holds for the commutator $\left[b, S_H\right]$ in $G$ uniformly.

Finally, it remains to show (b) of Lemma \ref{compactcondition} holds for the the commutator $\left[b, S_H\right]$ in $G$ uniformly. We need to prove that for any fixed $\varepsilon>0$, if $|h|$ is sufficiently small depended only on $\varepsilon$, then for every $f \in F$,
\begin{equation}\label{3.14}
	\left\|\left[b, S_H\right] f(\cdot+h)-\left[b, S_H\right] f(\cdot)\right\|_{L^{p, \lambda}(w)} \leqslant C \varepsilon.
\end{equation}
To do this, we need the smoothness truncated technique.
Let $\varphi\in C^{\infty}([0,\infty))$ satisfy
\begin{eqnarray}\label{3.1}
	0\leq\varphi\leq1\ \ \ \text{and}\ \ \
	\varphi(t)=
	\begin{cases}
		1,          &t\in[0,1],\\
		0,          &t\in[2,\infty).
	\end{cases}
\end{eqnarray}
For any $\gamma>0$, let
\begin{equation}\label{3.2}
	q_{t,\gamma}(x,y)=q_t(x,y)\Big(1-\varphi\Big(\frac{t}{\gamma}\Big)\Big).
\end{equation}
Note that this truncated function essentially just truncates $t$ and does not contribute anything to $x$ and $y$. With the kernel $q_{t,\gamma}$, we define
$$
\begin{aligned}
	& S_{H,\gamma} f(x)=\left(\int_0^{\infty} \int_{|x-y|<t}\left|\int_{\rn} q_{t,\gamma}(y,z) f(z) d z\right|^2 \frac{d y d t}{t^{n+1}}\right)^{\frac{1}{2}}
\end{aligned}
$$
and
$$
\begin{aligned}
	\left[b, S_{H,\gamma}\right] f(x) =\left(\int_0^{\infty} \int_{|x-y|<t}\left|\int_{\rn} q_{t,\gamma}(y,z)\left(b(x)-b(z)\right) f(z) d z\right|^2 \frac{d y d t}{t^{n+1}}\right)^{\frac{1}{2}}.
\end{aligned}
$$
Then,  by \eqref{size2} and  \eqref{smooth2}, for any $\alpha \in(0, \nu)$, there exists a positive constant $C_1$ such that for all $t \in(0, \infty)$ and almost every $x, y, h \in \mathbb{R}^n$ with $2|h| \leq t+|x-y|$, we have
\begin{align}\label{3.5'}
	\begin{split}
		|q_{t,\gamma}(x,y)|\leq |q_t(x,y)|
		\leq\frac{C}{t^{n}} \exp \Big\{-\frac{c|x-y|^{2}}{t^2}\Big\},
	\end{split}
\end{align}
and
\begin{align}\label{3.5}
	\begin{split}
		|q_{t,\gamma}(x+h,y)-q_{t,\gamma}(x,y)|
		&\leq |q_{t}(x+h,y)-q_{t}(x,y)|\\
		&\leq \frac{C_1}{t^n}\left(\frac{|h|}{t+|x-y|}\right)^\alpha \exp \left\{-\frac{c_1|x-y|^{2}}{t^{2}}\right\}.
	\end{split}
\end{align}
For any  $\gamma>0$, by \eqref{3.2}, one has
\begin{align}\label{3.6}
	&|\left[b, S_{H,\gamma}\right]f(x)-\left[b, S_{H}\right] f(x)|\\
	&\leq C\displaystyle\Big(\int_{0}^{2\gamma}\int_{|x-y|<t}\Big(\int_{\mathbb{R}^n}t^{-n}e^{-\frac{c|y-z|^2}{t^2}}|b(x)-b(z)||f(z)|dz\Big)^2 \frac{d y d t}{t^{n+1}}\Big)^{1/2}\nonumber\\
	&\leq C\displaystyle\Big(\int_{0}^{2\gamma}\int_{|x-y|<t}\Big(\int_{|x-y|\geq\frac{|x-z|}{2}}t^{-n}e^{-\frac{c|y-z|^2}{t^2}}|b(x)-b(z)||f(z)|dz\Big)^2 \frac{d y d t}{t^{n+1}}\Big)^{1/2}\nonumber\\
	&\quad +C\displaystyle\Big(\int_{0}^{2\gamma}\int_{|x-y|<t}\Big(\int_{|x-y|<\frac{|x-z|}{2}}t^{-n}e^{-\frac{c|y-z|^2}{t^2}}|b(x)-b(z)||f(z)|dz\Big)^2 \frac{d y d t}{t^{n+1}}\Big)^{1/2}\nonumber\\
	&=:U_1+U_2.  \nonumber
\end{align}
For $U_1$, note that $|x-z|\leq{2}|x-y|\leq 2t$, we have
\begin{align}\label{3.7}
	\begin{split}
		U_1&\leq C\|\nabla b\|_{L^\infty}\displaystyle\Big(\int_{0}^{2\gamma}\int_{|x-y|<t}\Big(\int_{|x-z|\leq 2t}t^{-n}|x-z||f(z)|dz\Big)^2 \frac{d y d t}{t^{n+1}}\Big)^{1/2}\\
		&\leq C\|\nabla b\|_{L^\infty}\displaystyle\Big(\int_{0}^{2\gamma}t^2Mf(x)^2 \frac{d y d t}{t}\Big)^{1/2}\\
		&\leq C\gamma Mf(x).
	\end{split}
\end{align}
For $U_2$, it not difficult to see that $|y-z|\geq \frac{|x-z|}{2}$, then using the estimate $e^{-s}\leq\frac{C}{s^{M/2}}$ with $M>n+1$ and splitting to annuli, it follows that
\begin{align}\label{3.8}
	U_2&\leq C\displaystyle\Big(\int_{0}^{2\gamma}\int_{|x-y|<t}\Big(\int_{2|x-y|<|x-z|<t}t^{-n}e^{-\frac{c|y-z|^2}{t^2}}|b(x)-b(z)||f(z)|dz\Big)^2 \frac{d y d t}{t^{n+1}}\Big)^{1/2}\nonumber\\
	&\quad+C\displaystyle\Big(\int_{0}^{2\gamma}\int_{|x-y|<t}\Big(\int_{|x-y|<\frac{|x-z|}{2},|x-z|\geq t}t^{-n}e^{-\frac{c|y-z|^2}{t^2}}|b(x)-b(z)||f(z)|dz\Big)^2 \frac{d y d t}{t^{n+1}}\Big)^{1/2}\nonumber\\
	&\leq C\gamma Mf(x)+C\|\nabla b\|_{L^\infty}\displaystyle\Big(\int_{0}^{2\gamma}\int_{|x-y|<t}\Big(\sum_{k=0}^{\infty}\int_{2^kt\leq|x-z|\leq 2^{k+1} t}t^{M-n}\frac{|x-z|}{|x-z|^M}|f(z)|dz\Big)^2 \frac{d y d t}{t^{n+1}}\Big)^{1/2}\nonumber\\
	&\leq C\gamma Mf(x)+C\displaystyle\Big(\int_{0}^{2\gamma}\Big(\sum_{k=0}^{\infty}\frac{t^{M-n}}{(2^kt)^{M-n-1}}\frac{1}{(2^kt)^n}\int_{|x-z|\leq 2^{k+1} t}|f(z)|dz\Big)^2 \frac{ d t}{t}\Big)^{1/2}\nonumber\\
	&\leq C\gamma Mf(x)+C\sum_{k=0}^{\infty}\frac{1}{2^{k(M-n-1)}}\displaystyle\Big(\int_{0}^{2\gamma}t^2Mf(x)^2 \frac{ d t}{t}\Big)^{1/2}\nonumber\\
	&\leq C\gamma Mf(x).
\end{align}
Combing \eqref{3.8} with \eqref{3.6} and \eqref{3.7} may lead to
\begin{equation*}
	|\left[b, S_{H,\gamma}\right]f(x)-\left[b, S_{H}\right]f(x)|\leq C\gamma Mf(x).
\end{equation*}
Then, 
\begin{equation}\label{3.9}
	\|\left[b, S_{H,\gamma}\right]f-\left[b, S_{H}\right]f\|_{L^{p,\lambda}(w)}\leq C\gamma\|f\|_{L^{p,\lambda}(w)},
\end{equation}
which implies that
\begin{equation}\label{3.10}
	\lim_{\gamma\rightarrow 0}\|\left[b, S_{H,\gamma}\right]-\left[b, S_{H}\right]f\|_{{L^{p,\lambda}(w)}\rightarrow {L^{p,\lambda}(w)}}=0.
\end{equation}
Thus, by Minkowski's inequality, to prove \eqref{3.14}, it suffices to show that $\left[b, S_{H,\gamma}\right]$ satisfies \eqref{3.14}  when $\gamma>0$ is small enough.

Taking $\gamma=|h|$ and $|h|\in(0,1)$. If $t<|h|$, it holds that
$$\varphi\Big(\frac{t}{\gamma}\Big)=\varphi\Big(\frac{t}{|h|}\Big)=1.$$
This implies
$q_{t,\gamma}(x+h,y)=q_{t,\gamma}(x,y)=0.$
Therefore, we have
$$
\left|\left[b,S_{H,\gamma}\right] f(x+h)-\left[b,S_{H,\gamma}\right] f(x)\right| \leqslant\left(\int_{|h|}^{\infty} \int_{|x-y|<t}|I(x, h, y, t)|^2 \frac{d y d t}{t^{n+1}}\right)^{1 / 2},
$$
where
$$
\begin{aligned}
	I(x, h, y, t)= & \int_{\rn} q_{t,\gamma}(y,z)(b(x)-b(z)) f(z) d z  -\int_{\rn} q_{t,\gamma}(y+h,z)(b(x+h)-b(z)) f(z) d z.
\end{aligned}
$$
For any $0<\varepsilon<\frac{1}{3}$ and $h \in \mathbb{R}^n$, write $I(x, h, y, t)$ as
$$
\begin{aligned}
	I(x, h, y, t)=	&\int_{\rn} q_{t,\gamma}(y,z)(b(x)-b(x+h)) f(z) d z \\
	&+\int_{|x-z|<2^{\frac{1}{\varepsilon}}|h|}\left(q_{t,\gamma}(y+h,z)-q_{t,\gamma}(y,z)\right)(b(x+h)-b(z)) f(z) d z \\
	& +\int_{|x-z|\geq2^{\frac{1}{\varepsilon}}|h|}\left(q_{t,\gamma}(y+h,z)-q_{t,\gamma}(y,z)\right)(b(x+h)-b(z)) f(z) d z \\
	=&: \sum_{i=1}^3 I_i(x, h, y, t) \text {. }
\end{aligned}
$$
For $I_1(x, h, y, t)$, note that	
$$
S_1f(x):=\left(\int_{|h|}^{\infty} \int_{|x-y|<t}|I_1(x, h, y, t)|^2 \frac{d y d t}{t^{n+1}}\right)^{1 / 2}\leq\|\nabla b\|_{\infty}|h|S_H(f)(x),
$$
Applying \eqref{3.5} to $I_2$ and using Minkowsi's inequality again, we can obtain	
\begin{align*}
	S_2f(x):&= \left(\int_{|h|}^{\infty} \int_{|x-y|<t}\Big|I_2(x, h, y, t)\Big|^2 \frac{d y d t}{t^{n+1}}\right)^{1 / 2}\\
	&\leq C\|\nabla b\|_{\infty}\left(\int_{|h|}^{\infty} \int_{|x-y|<t}\Big|\int_{|x-z|<2^{\frac{1}{\varepsilon}}|h|}\frac{1}{t^n} |x+h-z|| f(z)| d z\Big|^2 \frac{d y d t}{t^{n+1}}\right)^{1 / 2}\\
	&\leq C\|\nabla b\|_{\infty}(2^{\frac{1}{\varepsilon}}+1)|h|\int_{|x-z|<2^{\frac{1}{\varepsilon}}|h|}\left(\int_{|h|}^{\infty} \int_{|x-y|<t}\frac{d y d t}{t^{3n+1}}\right)^{1 / 2}|f(z)|d z\\
	&\leq C\|\nabla b\|_{\infty}(2^{\frac{1}{\varepsilon}}+1)|h|\frac{2^{\frac{n}{\varepsilon}}}{\big(2^{\frac{1}{\varepsilon}}|h|\big)^n}\int_{|x-z|<2^{\frac{1}{\varepsilon}}|h|}|f(z)|d z\\
	&\leq C(2^{\frac{1}{\varepsilon}}+1)2^{\frac{n}{\varepsilon}}|h| Mf(x).
\end{align*}
Finally, we deal with the term $I_3$, it's easy to see that
\begin{align*}
	S_3f(x):&= \left(\int_{|h|}^{\infty} \int_{|x-y|<t}\Big|I_3(x, h, y, t)\Big|^2 \frac{d y d t}{t^{n+1}}\right)^{1 / 2}\\
	&\leq\|b\|_{\infty} \int_{|x-z|\geq2^{\frac{1}{\varepsilon}}|h|}\left(\int_{|h|}^{\infty} \int_{|x-y|<t} \left|q_{t,\gamma}(y+h,z)-q_{t,\gamma}(y,z)\right|^2  \frac{d y d t}{t^{n+1}}\right)^{1 / 2}|f(z)|d z\\
	&\leq C \int_{|x-z|\geq2^{\frac{1}{\varepsilon}}|h|}\left(\int_F\int_{|x-y|}^{\infty} \left|q_{t,\gamma}(y+h,z)-q_{t,\gamma}(y,z)\right|^2  \frac{d td y }{t^{n+1}}\right)^{1 / 2}|f(z)|d z\\
	&\quad+ C \int_{|x-z|\geq2^{\frac{1}{\varepsilon}}|h|}\left(\int_{F^c}\int_{|x-y|}^{\infty} \left|q_{t,\gamma}(y+h,z)-q_{t,\gamma}(y,z)\right|^2  \frac{d td y }{t^{n+1}}\right)^{1 / 2}|f(z)|d z,
\end{align*}	
where $F:=\{y\in\rn: |x-y|\geq\frac{|x-z|}{2}\}$. Making use of \eqref{3.5} and the fact that $|x-z|\leq t+|y-z|$, it holds
%then for any $y\in F$, $t\geq \frac{|x-z|}{2}$, therefore 

\begin{align}\label{3.24}
	&\int_F\int_{|x-y|}^{\infty} \left|q_{t,\gamma}(y+h,z)-q_{t,\gamma}(y,z)\right|^2  \frac{d td y }{t^{n+1}}\nonumber\\
	&\leq C \int_{|x-y|\geq\frac{|x-z|}{2}}\int_{|x-y|}^{\infty} \frac{1}{t^{2n}}\left(\frac{|h|}{|x-z|}\right)^{2\alpha}  \frac{d td y }{t^{n+1}}\nonumber\\
	&\leq C \frac{|h|^{2\alpha}}{|x-z|^{2\alpha}} \int_{|x-y|\geq\frac{|x-z|}{2}} \frac{1}{|x-y|^{3n}} {d y }\nonumber\\
	&\leq C \frac{|h|^{2\alpha}}{|x-z|^{2n+2\alpha}} .
\end{align}
On the other hand,

\begin{equation}\label{3}
	\begin{split}
		& \int_{F^c}\int_{|x-y|}^{\infty} \left|q_{t,\gamma}(y+h,z)-q_{t,\gamma}(y,z)\right|^2  \frac{d td y }{t^{n+1}}\\
		&= \int_{F^c}\int_{|x-y|}^{\frac{|x-z|}{2}} \left|q_{t,\gamma}(y+h,z)-q_{t,\gamma}(y,z)\right|^2  \frac{d td y }{t^{n+1}}\\
		&\quad+\int_{F^c}\int_{\frac{|x-z|}{2}}^{\infty} \left|q_{t,\gamma}(y+h,z)-q_{t,\gamma}(y,z)\right|^2  \frac{d td y }{t^{n+1}}.
	\end{split}
\end{equation}
Noting that for any $y\in F^c$, $|y-z|\leq \frac{|x-z|}{2}$ and using \eqref{3.5} and the estimate $e^{-s}\leq\frac{C}{s^{N/2}}$ with $2n<N<3n$,  it yields
\begin{align}\label{3.25}
	&\int_{F^c}\int_{|x-y|}^{\frac{|x-z|}{2}} \left|q_{t,\gamma}(y+h,z)-q_{t,\gamma}(y,z)\right|^2  \frac{d td y }{t^{n+1}}\\
	&\leq C \int_{|x-y|\leq\frac{|x-z|}{2}}\int_{|x-y|}^{\infty} \frac{|h|^{2\alpha}}{|x-z|^{2\alpha}}\frac{t^N}{|x-z|^N}  \frac{d td y }{t^{3n+1}}\nonumber\\
	&\leq C \frac{|h|^{2\alpha}}{|x-z|^{2\alpha}}\frac{1}{|x-z|^N} \int_{|x-y|\leq\frac{|x-z|}{2}}  \frac{d y }{|x-y|^{3n-N}}\nonumber\\
	&\leq C \frac{|h|^{2\alpha}}{|x-z|^{2\alpha}}\frac{1}{|x-z|^N} |x-z|^{N-2n}\nonumber\\
	&= C \frac{|h|^{2\alpha}}{|x-z|^{2n+2\alpha}}, \nonumber
\end{align}
and
\begin{align}\label{3.26}
	&\int_{F^c}\int_{\frac{|x-z|}{2}}^{\infty} \left|q_{t,\gamma}(y+h,z)-q_{t,\gamma}(y,z)\right|^2  \frac{d td y }{t^{n+1}}\nonumber\\
	&\leq C\int_{F^c}\int_{\frac{|x-z|}{2}}^{\infty}\frac{|h|^{2\alpha}}{|x-z|^{2\alpha}} \frac{d td y }{t^{3n+1}}\\
	&\leq C\frac{|h|^{2\alpha}}{|x-z|^{2\alpha}}\int_{|x-y|\leq\frac{|x-z|}{2}} \frac{1 }{|x-z|^{3n}}d y\nonumber\\
	&\leq C\frac{|h|^{2\alpha}}{|x-z|^{2n+2\alpha}}\nonumber.
\end{align}
This, combining with \eqref{3.24}, \eqref{3} and \eqref{3.25}, yields that 
\begin{align*}
	S_3	&\leq C \int_{|x-z|\geq2^{\frac{1}{\varepsilon}}|h|}\frac{|h|^{\alpha}}{|x-z|^{n+\alpha}}|f(z)|d z\\
	&=C\sum_{j=0}^{\infty} \int_{2^{j+\frac{1}{\varepsilon}}|h|<|x-z|<2^{j+\frac{1}{\varepsilon}+1}|h|}\frac{|h|^{\alpha}}{|x-z|^{n+\alpha}}|f(z)|d z\\
	&=C|h|^{\alpha}\sum_{j=0}^{\infty} \frac{1}{(2^{j+\frac{1}{\varepsilon}}|h|)^{n+\alpha}}\int_{|x-z|<2^{j+\frac{1}{\varepsilon}+1}|h|}|f(z)|d z\\
	&\leq C|h|^{\alpha}\sum_{j=0}^{\infty} \frac{1}{(2^{j+\frac{1}{\varepsilon}}|h|)^{\alpha}}Mf(x)\\
	&\leq C {2^{-\frac{\alpha}{\varepsilon}}}Mf(x).
\end{align*}	
By the $L^{p, \lambda}(w)$-boundedness of $S_H$ and $M$, we have
$$
\begin{aligned}
	&\left\|\left[b, S_{H,\gamma}\right] f(\cdot+h)-\left[b, S_{H,\gamma}\right] f(\cdot)\right\|_{L^{p, \lambda}(w)}\\
	&\leq \|(S_1+S_2+S_3)f\|_{L^{p, \lambda}(w)}\\
	& \leqslant C |h|\|S_Hf\|_{L^{p, \lambda}(w)}+\big((2^{\frac{1}{\varepsilon}}+1)2^{\frac{n}{\varepsilon}}|h|+2^{-\frac{\alpha}{\varepsilon}}\big)\|Mf\|_{L^{p, \lambda}(w)} \\
	& \leqslant C \big(|h|+(2^{\frac{1}{\varepsilon}}+1)2^{\frac{n}{\varepsilon}}|h|+2^{-\frac{\alpha}{\varepsilon}}\big)\|f\|_{L^{p, \lambda}(w)} .
\end{aligned}
$$
Taking $\varepsilon=\frac{4n}{\log\frac{1}{|h|}} $
and letting $|h|\to 0$,  we have the uniform equicontinuity (b) of Lemma \ref{compactcondition} and complete the proof of Theorem \ref{compact thm} for $S_{H,b}$. 

Finally, to prove Theorem \ref{compact thm} for $S_{P,b}$,  it suffices to note that 
$$
e^{-t\sqrt{L}}=\frac{1}{\sqrt{\pi}}\int_0^\infty\frac{e^{-u}}{\sqrt{u}}e^{-\frac{t^2L}{4u}},
$$
by the subordination formula. This, combining with \eqref{smooth2} and \eqref{size2}, produces that
the kernel of $t\sqrt{L}e^{-t\sqrt{L}}$, $\tilde{q}_t$, enjoys the similar properties as $q_t$:
for any $\tilde{\alpha} \in(0, \nu)$, there exists a positive constant $\tilde{C}_1$ such that for all $t \in(0, \infty)$ and almost every $x, y, h \in \mathbb{R}^n$ with $2|h| \leq t+|x-y|$,
\begin{equation}\label{smooth3}
	\begin{aligned}
		& \left|\tilde{q}_{t}(x+h, y)-\tilde{q}_{t}(x, y)\right|+\left|\tilde{q}_{t}(x, y+h)-\tilde{q}_{t}(x, y)\right| \\
		& \quad \leq \frac{\tilde{C}}{t^n}\left(\frac{|h|}{|x-y|}\right)^{\tilde{\alpha}} \left(1+\frac{\tilde{c}|x-y|^{2}}{t^{2}}\right)^{-\frac{n+1}{2}},
	\end{aligned}
\end{equation}
\begin{equation}\label{size3}
	\left|\tilde{q}_t(x, y)\right|+\left|\tilde{q}_t(y, x)\right| \leq \frac{\tilde{C_1}}{t^{n}}\left(1+\frac{\tilde{c}_1|x-y|^{2}}{t^{2}}\right)^{-\frac{n+1}{2}}.
\end{equation} 
Therefore,  one can obtain the compactness for $S_{P,b}$ by repeating the process of $S_{H,b}$ above and this completes the proof of Theorem \ref{compact thm}.

\qed
\begin{remark}
	For $f \in \mathcal{S}\left(\mathbb{R}^n\right)$, we define the Littlewood-Paley square functions $g_P$ and $g_H$ by
	\begin{align*}
		& g_p(f)(x)=\left(\int_0^{\infty}\left|t \sqrt{L} e^{-t \sqrt{L}} f(x)\right|^2 \frac{d t}{t}\right)^{1 / 2}, \\
		& g_h(f)(x)=\left(\int_0^{\infty}\left|t^2 L e^{-t^2 L} f(x)\right|^2 \frac{d t}{t}\right)^{1 / 2} .
	\end{align*}
	Then, their commutators have the analogous statements as in  Theorem \ref{lebesgue bound}, Theorem \ref{morreybound}, Theorem \ref{compact thm}   and Theorem \ref{wgbF-S} replacing $S_P$, $S_H$ by $g_P, g_H$, respectively.
\end{remark}

%\iffalse
\bigskip \bigskip

C. Zhang, 
Department of Mathematics,
Zhejiang University of Science and Technology,
Hangzhou 310023, People's Republic of China

\smallskip

\noindent{{\it E-mail:}}
\texttt{cmzhang@zust.edu.cnn}

\bigskip

X. Tao,
 Department of Mathematics,
Zhejiang University of Science and Technology,
Hangzhou 310023, People's Republic of China

\smallskip

\noindent{\it E-mail:} \texttt{xxtao@zust.edu.cn}
%\fi


\begin{thebibliography}{99}
		\small
\bibitem{Aus}
Auscher P.,  Duong X.T.,  McIntosh A.: {\it  Boundedness of Banach space valued singular integral operators and Hardy spaces},
Unpublished preprint (2005).

\vspace{-0.2cm}
\bibitem{AHLMT}
Auscher P., Hofmann S., Lacey M., McIntosh A., Tchamitchian, P.: \textit{The
	solution of the Kato square root problem for second order elliptic operators on $\mathbb R^n$}. Ann. of Math. (2) 156 (2002), no. 2, 633-654. %DOI 10.2307/3597201.

\vspace{-0.2cm}
\bibitem{AT}
Auscher P., Tchamitchian Ph.: \textit{Square root problem for divergence operators and related topics}. Ast\'erisque \textbf{249}, viii+172 pp (1998). %http://www.numdam.org/item/AST\_1998\_249\_R1\_0/

\vspace{-0.2cm}
\bibitem{BDMT2} B\'enyi A.,  Dami\'an W., Moen K.,  Torres R. H.:
{\itshape Compact bilinear commutators: The weighted case}. Michigan Math. J. {\bf64} (1), 39-51 (2015). %https://doi.org/10.1307/mmj/1427203284

\vspace{-0.2cm}
\bibitem{BHS}   Bongioanni B.,  Harboure E.,  Salinas O.: {\itshape Commutators of Riesz transforms related to Schr\"odinger operators}. J. Fourier Anal. Appl. {\bf17} (1), 115-134 (2011). %https://doi.org/10.1007/s00041-010-9133-6

\vspace{-0.2cm}
\bibitem{BCD}
Bui T.,  Cao J., Ky L.,  Yang D.,  Yang S.: {\it Musielak-Orlicz-Hardy spaces associated with operators satisfying reinforced off-diagonal estimates}. Anal. Geom. Metr. Spaces \textbf{1}, 69-129 (2013). %https://doi.org/10.1007/s12220-012-9344-y

\vspace{-0.2cm}
\bibitem{Ca}
Calder\'on A. P.: \textit{Commutators of singular integral operators}, Proc. Nat. Acad. Sci. U.S.A. \textbf{53}
(1965), 1092-1099.

\vspace{-0.2cm}
\bibitem{COY1}
Cao M., Olivo A., Yabuta K.: {\itshape Extrapolation for multilinear compact operators and applications}. Trans. Amer. Math. Soc. {\bf375} (7), 5011-5070 (2022).% https://doi.org/10.1090/tran/8645

\vspace{-0.2cm}
\bibitem{CSZ}
Cao M., Si Z., Zhang J.: \textit{Weak and strong type estimates for square functions associated with operators}. J. Math. Anal. Appl. \textbf{527}, no. 1, part 1, Paper No. 127369, 29pp (2023). %https://doi.org/10.1016/j.jmaa.2023.127369

\vspace{-0.2cm}
\bibitem{Ch} 
Chen Y.,  Ding Y.:, \textit{$L^2$ boundedness for commutator of rough singular integral
	with variable kernel}, Rev. Mat. Iberoam. \textbf{24}, no.2, 531-547 (2008).% DOI 10.4171/RMI/545.

\vspace{-0.2cm}
\bibitem{CD1} 
Chen Y.,  Ding Y.:
{\itshape Compactness of commutators of singular integrals with variable kernels}. (Chinese)
Chinese Ann. Math. Ser. A {\bf30}, 201-212  (2009); translation in Chinese J. Contemp. Math. {\bf30} (2), 153-166 (2009).  

\vspace{-0.2cm}

\bibitem{CDW1}  Chen Y.,  Ding Y.,  Wang X.:
{\itshape Compactness for commutators of Marcinkiewicz integral in Morrey spaces}.
Taiwanese Jour. Math. {\bf15}, 633-658 (2011).  %http://www.tjm.nsysu.edu.tw/


\vspace{-0.2cm}
\bibitem{CW15}
Chen Y., Wang H.: \textit{ Compactness for the commutator of the parameterized area integral in the Morrey space}. Math. Inequal. Appl. \textbf{18} (4),  1261-1273 (2015). %{https://api.semanticscholar.org/CorpusID:59019947}

\vspace{-0.2cm}

\bibitem{CF87}  Chiarenza F.,  Frasca M.: {\it Morrey spaces and Hardy-Littlewood maximal function}. Rend. Mat. Appl. {\bf7}, 273-279 (1987).


\vspace{-0.2cm}
\bibitem{fm}
Chiarenza F.,  Frasca M., Longo P.: \textit{$W^{2,p}$-solvability of the Dirichlet problem for nondivergence elliptic equations with VMO coefficients}, Trans. Amer. Math. Soc. \textbf{336} (2), 841-853 (1993). %DOI 10.2307/2154379.

\vspace{-0.2cm}
\bibitem{CPP12}  Chung D., Pereyra M. C., Perez C.:
{\itshape Sharp bounds for general commutators on weighted Lebesgue spaces}.
Trans. Amer. Math. Soc. {\bf364} (3), 1163-1177 (2012). %https://doi.org/10.1090/S0002-9947-2011-05534-0

\vspace{-0.2cm}
\bibitem{CRW}
Coifman R., Rochberg R., Weiss G.: {\it Factorization theorems for Hardy spaces in several variables}. Ann. of Math. {\bf 103}, 611-635 (1976). %https://doi.org/10.2307/1970954

\vspace{-0.2cm}
\bibitem{DM15} Ding Y., Mei T.:
{\itshape Boundedness and compactness for the commutators of bilinear operators on Morrey spaces}.
Potential Anal. {\bf42}, 717-748 (2015). %https://doi.org/10.1007/s11118-014-9455-0 


\vspace{-0.2cm}
\bibitem{fr}
Difazio G., Ragusa M. A.: \textit{Interior estimates in Morrey spaces for strong solutions to
	nondivergence form equations with discontinuous coefficients}, J. Funct. Anal. \textbf{112} (2), 241-256 (1993). %https://doi.org/10.1006/jfan.1993.1032.

\vspace{-0.2cm}
\bibitem{DL10} Duong X. T., Li J.: {\it Hardy spaces associated to operators satisfying Davies-Gaffney estimates and bounded holomorphic functional calculus}. J. Funct. Anal. {\bf 264} (6), 1409-1437 (2013). %https://doi.org/10.1016/j.jfa.2013.01.006


\vspace{-0.2cm}
\bibitem{TY} Duong X. T., Yan L.:
{\it Duality of Hardy and BMO spaces associated with operators with heat kernel bounds}.
J. Amer. Math. Soc. {\bf 18} (4), 943-973 (2005).  %https://doi.org/10.1090/S0894-0347-05-00496-0 


\vspace{-0.2cm}
\bibitem{fs24} Fefferman C., Stein E. M.: {\it  $H^p$
	spaces of several variables}. Acta Math. {\bf 129},  137-193 (1972).% https://doi.org/10.1007/BF02392215

\vspace{-0.2cm}
\bibitem{GX} Gong R., Xie P.:
{\it Weighted $L^p$ estimates for the area integral associated with self-adjoint operators on homogeneous space}.
J. Math. Anal. Appl.
{\bf 393} (2),
590-604 (2012). %https://doi.org/10.1016/j.jmaa.2012.04.038

\vspace{-0.2cm}

\bibitem{Gong}
Gong R.: 
{\it The area integral associated to self-adjoint operators on weighted Morrey spaces}.
J. Math. Sci. Adv. Appl. {\bf16} (1-2), 47-59 (2012). %https://api.semanticscholar.org/CorpusID:54952707

\vspace{-0.2cm}

\bibitem{GY}
Gong R., Yan L., 
{\it Weighted $L^p$ estimates for the area integral associated to self-adjoint operators}. 
Manuscripta Math. {\bf144} (1-2), 25-49 (2014). %https://doi.org/10.1007/s00229-013-0639-5  

\vspace{-0.2cm}

\bibitem{HLM} Hofmann S., Lu G., Mitrea D., Mitrea M., Yan L.: {\it Hardy spaces associated to non-negative self-adjoint operators satisfying Davies-Gaffney estimates}. Mem. Amer. Math. Soc. {\bf214} (1007), vi+78 pp (2011). %https://doi.org/10.1090/S0065-9266-2011-00624-6


\vspace{-0.2cm}
\bibitem{HM} Hofmann S., Mayboroda S.: {\it Hardy and BMO spaces associated to divergence form elliptic operators}. Math. Ann. {\bf344} (1), 37-116 (2009).% https://doi.org/10.1007/s00208-008-0295-3

\vspace{-0.2cm}

\bibitem{H1} Hyt\"onen T., Stefanos L.:	{\itshape Extrapolation of compactness on weighted spaces}. Rev. Mat. Iberoam. {\bf39} (1), 91-122 (2023).% https://doi.org/10.4171/RMI/1325

\vspace{-0.2cm}

\bibitem{H3} Hyt\"onen T., Lappas S.: 	{\itshape Extrapolation of compactness on weighted spaces: bilinear operators}.
Indag. Math. {\bf 33} (2),  397-420 (2022). %https://doi.org/10.1016/j.indag.2021.09.007

\vspace{-0.2cm}

\bibitem{KL2} Krantz S., Li S. Y.:
{\itshape Boundedness and compactness of integral operators on spaces of homogeneous
	type and applications II}.
J. Math. Anal. Appl. {\bf258}, 642-657 (2001). %https://doi.org/10.1006/jmaa.2000.7403 

\vspace{-0.2cm}
\bibitem{LOR17}
Lerner A. K.,  Ombrosi S., Rivera-R\'ios  I. P.:
\emph{On pointwise and weighted estimates for commutators of Calder\'on-Zygmund operators.}
Adv. Math. \textbf{319}, 153-181 (2017). %https://doi.org/10.1016/j.aim.2017.08.022 

\vspace{-0.2cm}
\bibitem{LOR21}
Lerner A. K. , Ombrosi  S.,  Rivera-R\'ios I. P.: \textit{On two weight estimates for iterated commutators}. J. Funct. Anal. \textbf{281} (8), Paper No. 109153, 46 pp (2021). %https://doi.org/10.1016/j.jfa.2021.109153

\vspace{-0.2cm}
\bibitem{LC}
Liu F., Cui P.: \textit{Variation operators for singular integrals and their commutators on
	weighted Morrey spaces and Sobolev spaces}. Sci. China Math. \textbf{65}, 1267-1292 (2022). %https://doi.org/10.1007/s11425-020-1828-6 

\vspace{-0.2cm}
\bibitem{MP1}
Martell J. M., Prisuelos-Arribas C.: \textit{Weighted Hardy spaces associated with elliptic operators. Part I: weighted norm
	inequalities for conical square functions}. Trans. Am. Math. Soc. \textbf{369}, 4193-4233 (2017). %http://dx.doi.org/10.1090/tran/6768

\vspace{-0.2cm}
\bibitem{MP2} Martell J. M., Prisuelos-Arribas C.: \textit{Weighted Hardy spaces associated with elliptic operators. Part II: characterizations of $H_L^1(w)$}. Publ. Mat. \textbf{62}, 475-535 (2018). %https://doi.org/10.5565/PUBLMAT6221806


\vspace{-0.2cm}
\bibitem{O}
O'Neil, R.:\textit{ Fractional integration in Orlicz spaces I.} Trans. Amer. Math. Soc. \textbf{115},
300-328 (1965).


\vspace{-0.2cm}
\bibitem{P95}
P\'erez C.: {\it Endpoint estimates for commutators of singular integral operators}. J. Funct. Anal. \textbf{128}, 163-185 (1995). %https://doi.org/10.1006/jfan.1995.1027

\vspace{-0.2cm}
\bibitem{P13}
P\'erez C.: {\it A course on singular integrals and weights}. Adv. Courses Math. CRM Barcelona, Birkh\"auser, Basel (2013). %https://doi.org/10.1007/978-3-0348-0408-0\_3

\vspace{-0.2cm}
\bibitem{[PT]}
{ P\'{e}rez C., Trujillo-Gonz\'{a}lez R.:} {\it Sharp weighted estimates for multilinear commutators}.
{J. London Math. Soc.(2)} {\bf65} (3), {672-692} (2002). %https://doi.org/10.1112/S0024610702003174

\vspace{-0.2cm}

\bibitem{RR}
Rao, M. M., Ren, Z. D.: {\it Theory of Orlicz spaces}. Monographs and Textbooks in Pure
and Applied Mathematics, \textbf{146}. Marcel Dekker, Inc., New York, 1991.

\vspace{-0.2cm}

\bibitem{SY}
Song L., Yan L.: {\it A maximal function characterization for Hardy spaces associated to nonnegative self-adjoint operators satisfying Gaussian estimates}. Adv. Math. {\bf287}, 463-484 (2016). %https://doi.org/10.1016/j.aim.2015.09.026



\vspace{-0.2cm}

\bibitem{SW58}
Stein E.M., Weiss G.: {\it Interpolation of operators with change of measures}.
Trans. Amer. Math. Soc. {\bf87} 159-172 (1958). %https://doi.org/10.2307/1993094	

\vspace{-0.2cm}

\bibitem{TXYY}
Tao J., Xue Q., Yang D., Yuan W.:
{\itshape XMO and weighted compact bilinear commutators}. J. Fourier Anal. Appl. {\bf27} (3), Paper No. 60, 34 pp (2021). %https://doi.org/10.1007/s00041-021-09854-x

\vspace{-0.2cm}

\bibitem{TX}
Torres R. H., Xue Q.:
{\itshape On compactness of commutators of multiplication and bilinear pesudodifferential operators and a new subspace of $BMO$}. Rev. Mat. Iberoam. {\bf36} (3), 939-956 (2020).% https://doi.org/10.4171/rmi/1156

\vspace{-0.2cm}
\bibitem{Uch} Uchiyama A.:
{\itshape On the compactness of operators of Hankel type}. Tohoku Math. J.(2) {\bf 30} (1), 163-171 (1978). %https://doi.org/10.2748/tmj/1178230105

\vspace{-0.2cm}
\bibitem{uhl}
Uhl M., {\it Spectral multiplier theorems of H\"ormander type via generalized Gaussian
	estimates,} Ph.D. Dissertation, Karlsruher Institut f\"ur Technologie (KIT), Karlsruhe
(2011). %URL: http://digbib.ubka.uni-karlsruhe.de/volltexte/1000025107


\vspace{-0.2cm}

\bibitem{DX} Xue  Q., Ding Y.: \textit{Weighted estimates for the multilinear commutators of the Littlewood-Paley operators}. Sci. China Ser. A-Math. \textbf{52}, 1849-1868 (2009). %https://doi.org/10.1007/s11425-009-0049-z

\vspace{-0.2cm}

\bibitem{XZ}Xue, Q., Zhang, C.: \textit{On weighted compactness of commutators of Stein's square functions associated with
	Bochner-Riesz means}. J. Geom. Anal. \textbf{34} (2024), no. 11, Paper No. 332, 18 pp. %https://doi.org/10.1007/s12220-024-01775-7

\end{thebibliography}
\end{document}